\newcommand\blfootnote[1]{%
  \begingroup
  \renewcommand\thefootnote{}\footnote{#1}%
  \addtocounter{footnote}{-1}%
  \endgroup
}
\begin{document}

\begin{center}

  {\bf{\LARGE{Multiscale replay: A robust algorithm for stochastic variational inequalities with a Markovian buffer}}}

\vspace*{.2in}

{\large{
\begin{tabular}{ccc}
Milind Nakul$^{\star}$, Tianjiao Li$^{\ddagger,\diamond}$, Ashwin Pananjady$^{\star, \dagger}$
\end{tabular}
}}
\vspace*{.2in}

\begin{tabular}{c}
Georgia Tech, Schools of Industrial and Systems Engineering$^\star$ \& Electrical and Computer Engineering$^\dagger$ \\
MIT, Sloan School of Management$^\ddagger$
\end{tabular}

\vspace*{.15in}

\today

\vspace*{.15in}

\begin{abstract}
\blfootnote{$^\diamond$Tianjiao Li was at Georgia Tech when most of this work was performed.}
We introduce the Multiscale Experience Replay (MER) algorithm for solving a class of stochastic variational inequalities (VIs) in settings where samples are generated from a Markov chain and we have access to a memory buffer to store them. Rather than uniformly sampling from the buffer, MER utilizes a multi-scale sampling scheme to emulate the behavior of VI algorithms designed for independent and identically distributed samples, overcoming bias in the de facto serial scheme and thereby accelerating convergence. Notably, unlike standard sample-skipping variants of serial algorithms, MER is robust in that it achieves this acceleration in iteration complexity whenever possible, and without requiring knowledge of the mixing time of the Markov chain. 
We also discuss applications of MER,
particularly in policy evaluation with temporal difference learning and in training generalized linear models with dependent data.
\end{abstract}
\end{center}

\section{Introduction}
Variational inequalities (VIs) provide a general mathematical framework for modeling a wide range of problems in various domains, such as game theory, economics, operator theory, as well as modern machine learning \citep{browder1966existence, rockafellar1969convex, bach2008convex, chambolle2011first, ben2009robust, goodfellow2020generative}. An appealing aspect is that they allow for a unified treatment of both optimization and equilibrium problems. 

Given a closed convex feasible region $X \subseteq \mathbb{R}^n$ and an operator $F : X \mapsto \mathbb{R}^n$, the VI 
problem is to find an $x^* \in X$ such that
\begin{align}
    \label{eq:GMVI}
    \langle F(x^*),x-x^* \rangle \geq 0. \quad \text{ for all }  x \in X.
\end{align}
Throughout this paper, we assume the existence of a solution $x^*$ to problem \eqref{eq:GMVI}.
We also assume that $F$ is an $L$-Lipschitz continuous operator, i.e.,
for some $L>0$,
\begin{align}\label{eq:F_lipschitz}
    \| F(x_1)-F(x_2) \| \leq L\| x_1 - x_2 \|, \quad \text{ for all } x_1,x_2 \in X.
\end{align}
The notation $\| \cdot \|$ refers to the Euclidean norm throughout this paper, unless stated otherwise.

Additionally, we assume $F$ to be a generalized strongly monotone operator, i.e., for some $\mu > 0$, 
\begin{align}\label{eq:monotonicity}
    \langle F(x),x-x^* \rangle \geq \mu \|x-x^*\|^2, \quad \text{ for all } x \in X.
\end{align} 

Clearly, condition \eqref{eq:monotonicity} holds if $F$ is strongly monotone, i.e., 
\begin{align*}
\langle F(x_1) - F(x_2),x_1-x_2 \rangle \geq \mu \|x_1-x_2\|^2, \quad \text{ for all } x_1, x_2 \in X.
\end{align*}
Note that our assumed generalized strong monotonicity condition \eqref{eq:monotonicity} is weaker than global strong monotonicity, but already guarantees that $x^*$ satisfying Ineq.~\eqref{eq:GMVI} is unique.

The wide range of applications of VIs has motivated extensive algorithmic studies in the past few decades. Classical algorithms for monotone VIs include (but are not limited to) the gradient projection method \citep{ sibony1970methodes, bertsekas1997nonlinear}, the extragradient method \citep{korpelevich1976extragradient}, and the proximal point algorithm \citep{rockafellar1976monotone}. Roughly two decades ago~\citet{nemirovski2004prox} introduced the mirror-prox algorithm, a breakthrough that inspired numerous studies for solving VI problems in both deterministic \citep{auslender2005interior,nesterov2006solving, nesterov2007dual,  monteiro2010complexity, malitsky2015projected, dang2015convergence} and stochastic settings \citep{juditsky2011solving, nemirovski2009robust,chen2017accelerated, iusem2017extragradient, yousefian2017smoothing,cui2021analysis, kotsalis2022simple, tianjiao2022}. 

\subsection{Stochastic VIs with dependent data}
In this work, we operate in the latter stochastic setting where only inexact information about the operator $F$ is available.
Specifically, we assume the existence of a stochastic oracle, which, when given any query point $x \in X$, generates a stochastic estimator $\widetilde{F}(x,\xi) \in \mathbb{R}^n$, where $\xi \sim \pi$ is a random variable defined in some probability space $\Xi$. The estimator satisfies the property that at any fixed point $x$, it is marginally unbiased, in that
\begin{align}\label{eq:expectation_eq}
    F(x) = \EE[\widetilde{F}(x,\xi)] = \int _{\xi \in \Xi} \widetilde{F}(x,\xi) d \pi(\xi).
\end{align}
A typical assumption in the stochastic optimization literature is that we have access to a stream of samples $(\xi_1,\xi_2,...)$ that are drawn i.i.d. from $\pi$. However, this assumption is violated in important real-world scenarios such as Markov decision processes (MDPs) and reinforcement learning (RL) \citep{sutton2018reinforcement}.
In such settings it is common to confront the more challenging \emph{Markovian noise} setting, where $(\xi_t: t = 1, 2, \ldots)$ is a Markov process defined on the state space $\Xi$, and  $\pi$ denotes the unique stationary distribution of this process. 
Stochastic VIs with Markovian noise have recently garnered significant attention across applications in machine learning, statistics, optimization, and control, where data often exhibits serial correlations \citep{harold1997stochastic, spall2005introduction,benveniste2012adaptive,  meyn2012markov}. The key challenge in this setting is that sample correlations lead to \emph{biased} estimators of the operator $F$. Concretely,  letting $\mathcal{F}_{t - 1}$ denote the $\sigma$-field generated by the history $(\xi_1, \ldots, \xi_{t - 1})$, we have $\EE[\widetilde{F}(x,\xi_t) | \mathcal{F}_{t-1}] \neq F(x)$. The conditional bias $\EE[\widetilde{F}(x,\xi_t) | \mathcal{F}_{t-1}] - F(x)$ introduces challenges for developing fast-converging algorithms. 

Having said that, stochastic optimization with dependent data has seen significant study. While we cannot hope to cover this vast literature, we refer the reader to \citet{duchi2012ergodic} and references therein for a discussion of classical developments.  
Specifically relevant to VIs with Markovian noise is the literature on policy evaluation in reinforcement learning, which is a fundamental and extensively researched problem.  
Here, the most popular algorithm is called temporal difference (TD) learning, and can be viewed as serial stochastic approximation with a suitably constructed VI. The algorithm was first introduced by \citet{sutton1988learning} and theoretical analysis of TD learning has seen significant progress over the years. \citet{tsitsiklis1996analysis} provided one of the earliest results, proving the asymptotic convergence of TD learning with function approximation. Other classical results on asymptotic convergence include those by \citet{borkar2000ode} and \citet{borkar2008stochastic}, who used an ordinary differential equation (ODE) framework to analyze the long-term behavior of TD learning. More recently, \citet{bhandari2018finite} established finite-time performance guarantees for TD learning.
Other notable contributions include works by \citet{srikant2019finite,chen2021lyapunov,durmus2021stability, min2021variance,li2024q,wu2025uncertainty}.  

Broadly speaking, guarantees in this literature decompose the error $\| x_T - x^* \|^2$ at iteration $T$ into two components: One component, which tracks the progress of the algorithm even in the absence of noise, is called the deterministic error and typically decays exponentially in $T$; there have been various methods proposed to accelerate the exponent in this convergence in terms of how it depends on the conditioning of the problem (see, e.g.,~\citet{li2023accelerated} and references therein). The other component, called the stochastic error, captures how stochasticity influences convergence---this is typically the slower error term in noisy settings and forms the focus of this paper. With this context in mind, let us more concretely describe two representative results in this literature in order to motivate our paper.

\begin{figure}
    \centering
    \includegraphics[width=0.6\linewidth]{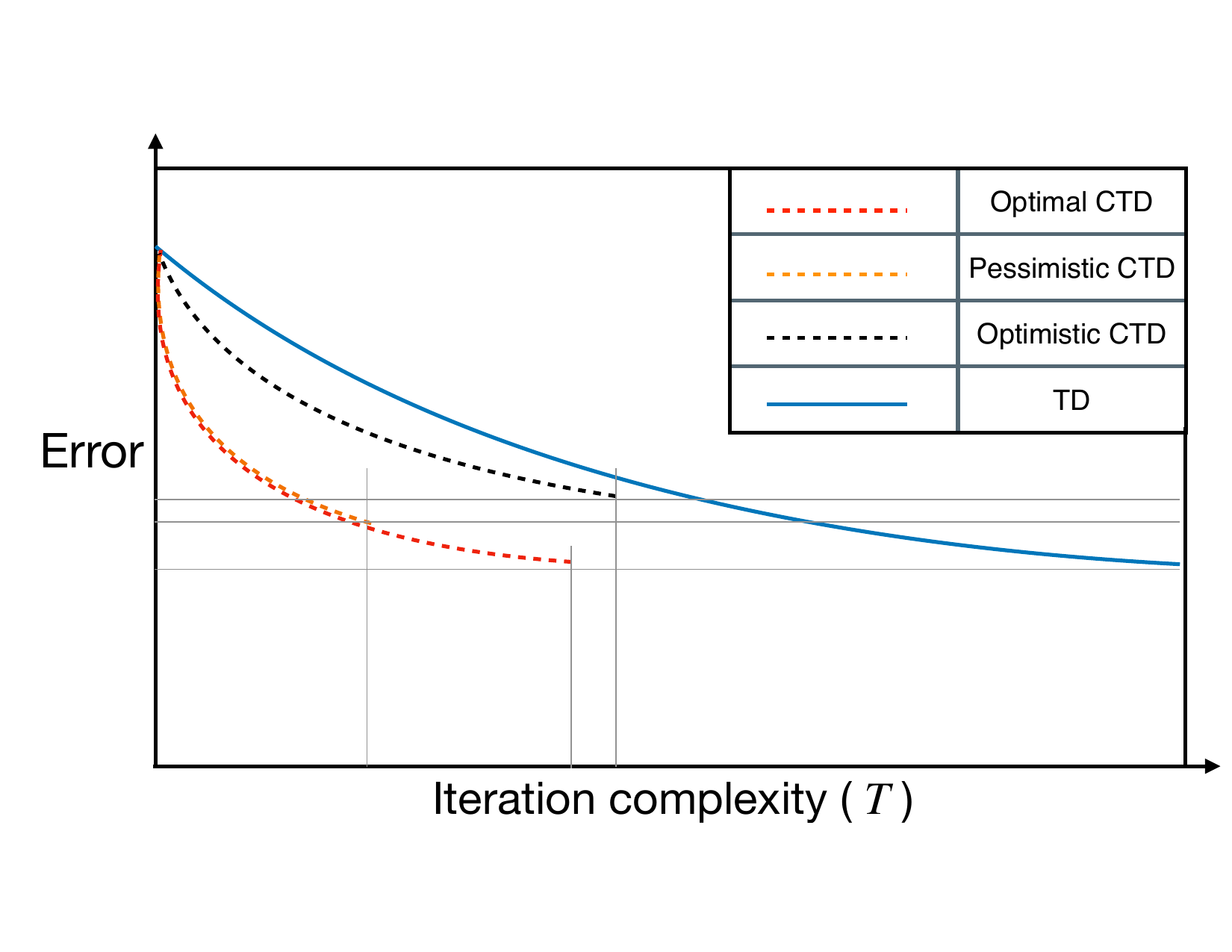} 
    \caption{ A schematic diagram showing the progression of error with the number of iterations when $T$ is the number of available samples. The blue curve represents the result of running the serial stochastic approximation algorithm, which we label as TD. The red curve represents the CTD algorithm~\citep{tianjiao2022} with the skipping parameter chosen correctly, with exact information about the mixing time. This curve has accelerated iteration complexity while achieving the same eventual error as TD. The orange curve represents a pessimistic CTD algorithm in which the skipping parameter is chosen to be much larger than the actual mixing time. Although this curve traces the optimal CTD curve for a few iterations, it stops much earlier and incurs large eventual error because it exhausts all available samples sooner. The black curve represents an optimistic version of CTD with the skipping parameter less than the mixing time. Although this runs for more iterations, it typically does not achieve an eventual error comparable to TD.}
    \label{fig:compare}
\end{figure}

On the one hand, for the problem of solving (projected) linear fixed point equations---which is a special case of VIs with a linear operator---\citet{mou2024optimal} recently analyzed the classical serial stochastic approximation algorithm on Markovian data with iterate averaging. They demonstrated a convergence rate of $\order(\bar{\tau} \sigma^2/T)$\footnote{Here $\sigma^2$ scales linearly in $d$, where $d$ is the dimension of the problem.} for the stochastic error, where $\sigma^2$ is a suitably defined noise level in the problem, $\bar{\tau}$ is the mixing time of the underlying Markov chain and $T$ is the total number of iterations; since the algorithm proceeds serially, this is also the total number of samples processed. On the other hand, \citet{tianjiao2022} introduced the conditional temporal difference (CTD) algorithm for solving the stochastic VI problem with Markovian data. They tackled the problem of data correlation via \emph{skipping} of samples and showed that the stochastic error after $T$ iterations could be bounded as $\order(\sigma^2/T)$, thereby accelerating iteration complexity and obtaining a guarantee comparable to one obtained for i.i.d. samples. However, CTD requires prior knowledge of the Markov chain's mixing properties to decide how many samples to skip per iteration; unfortunately, perfect knowledge of these  properties is rarely available a priori in real-world applications. This is also not a purely theoretical requirement; over- or under-specifying the skipping parameter during implementation of these algorithms can lead to undesirable convergence behavior, and a schematic of these issues is illustrated in Figure~\ref{fig:compare}. A key open question is thus to understand if the accelerated convergence of CTD can be attained \emph{without knowing} the mixing properties of the chain. Indeed, questions of this flavor---framed under the broad umbrella of adaptive and parameter-free algorithms---have received some recent interest in both optimization and RL~\citep[see, e.g.][]{dorfman2022adapting,ju2025auto}.

\subsection{Memory buffers and experience replay}

In this paper, we make use of an additional resource that is typically available in practice and could be helpful in designing algorithms that converge quickly without knowledge of mixing properties, which is a \emph{memory buffer}.
 Specifically, suppose that we can store the samples in a buffer of size $B$, where we start the algorithm using the vector of previously collected samples
\begin{align}
    \label{eq:markovian_buffer}
    \xi^B = (\xi_t)_{t=1}^B,
\end{align}
and update this buffer as (and if) we gather more samples. Serial stochastic approximation algorithms such as TD simply process all samples in the buffer in causal order, while skipping-based algorithms such as CTD aim to process every $\overline{\tau}$ samples in the buffer.
Can we instead strategically select samples from the buffer at each iteration of the algorithm? Note that since these samples are stored, the algorithm has the flexibility to process entries of the buffer in any order.

Memory buffers are commonly used in reinforcement learning (RL), and a very popular algorithm that arises from this paradigm is called  experience replay (ER), which was initially introduced by \citet{lin1992self}. Broadly speaking, most ER algorithms process the buffer by sampling data points uniformly at random,
and these techniques have gained widespread popularity for breaking temporal dependencies in data when training RL agents that gather samples on the fly~\citep{mnih2015human,wang2016sample, zhang2017deeper, fedus2020revisiting, kumar2022look}.
Despite its empirical success, the theoretical understanding of ER in iterative methods is still in its infancy~\citep[see, e.g.][]{nagaraj2020least, agarwal2021online, kowshik2021streaming, lazic2021improved}. For example, \citet{agarwal2021online} derived sample complexity bounds for Q-learning using the super-martingale structure of ``reverse experience replay". \citet{kowshik2021streaming} proposed an algorithm using reverse experience memory to address linear system identification problems, while \citet{lazic2021improved} provided regret bounds for regularized policy iteration with replay buffers in the context of average-reward MDPs. Additionally, \citet{pmlr-v162-di-castro22a} offered insights into the stochastic properties of the replay buffer, establishing the asymptotic convergence of actor-critic algorithms in RL. 

While ER has been largely confined to settings in RL, these ideas have not been more broadly adopted in stochastic optimization. Furthermore, the question of \emph{how} to optimally sample entries of the buffer in order to achieve optimal rates of convergence has not been thoroughly investigated to our knowledge. This paper aims to fill this important gap in the context of developing VI algorithms, and to apply these insights to specific examples of VIs (including in RL).

\subsection{Contributions and organization}
Our main contribution is to propose and analyze a new algorithm, which we call \emph{multiscale experience replay} (MER), for stochastic optimization in VIs with Markovian data when we have access to a memory buffer. 
In more detail:
\begin{enumerate}
    \item Our algorithm involves a careful sampling scheme to select entries of the memory buffer with respect to which algorithmic updates are performed. Specifically, the algorithm operates in epochs, and within each epoch, the iterates are updated using samples separated by a replay gap that decreases over epochs. Importantly, the implementation of this scheme does not require any information about the mixing time of the Markov chain. Under standard assumptions, we prove iteration complexity bounds on the algorithm that show acceleration comparable to skipping samples optimally, as in the CTD algorithm.
    \item In addition to establishing upper bounds on the error, we also show that in its initial epochs, our algorithm \emph{emulates} the behavior of the analogous iteration run on i.i.d. data provided the chain mixes in $1$-Wasserstein distance and the stochastic operator is appropriately regular. While it has been informally argued that experience replay enjoys such an advantage, we rigorously establish a two-sided bound showing that the error of our algorithm is tracked by its analog run on i.i.d. samples.
    \item Finally, we apply MER to generalized linear models as well as the policy evaluation problem in RL. For both problems, we show convergence rates that are comparable to state of the art guarantees but now with an algorithm that is agnostic to the mixing time of the chain. Analogous results regarding emulation of the i.i.d. setting are also established.
\end{enumerate}
The remainder of this paper is organized as follows. In Section \ref{subsec:examples_intro}, we provide some illustrative examples of VIs that we focus on in the paper. Section \ref{sec:MER} introduces the MER algorithm for VIs. In Section \ref{sec:theoretical_results}, we present our assumptions and main results in the general setting. In Section \ref{sec:examples}, we discuss consequences of these results for our illustrative examples. In Section~\ref{sec:numerical_results}, we present  numerical experiments that corroborate our theoretical results, and Section \ref{sec:proofs} collects proofs of all our claims. We conclude the paper with a discussion of open problems in Section~\ref{sec:discussion}.

\section{Illustrative examples}
\label{subsec:examples_intro}
Recall from our earlier set-up that we are interested in finding a point $x^*$ in a closed convex feasible region $X$ such that the condition in Ineq.~\eqref{eq:GMVI} is satisfied, when the samples are stored in a Markovian buffer of the form~\eqref{eq:markovian_buffer}. In this section, we describe two concrete examples of interest. 
\subsection{Generalized linear models (GLMs)}
\label{sec:GLM}
Consider the following generalized linear regression model --- also called the single-index model ---  
where we have an unknown signal $x^* \in X$ and observations are generated according to
\begin{align}
    \label{eq:GLM_model}
    y_t = f(a_t^\top x^*) + v_t.
\end{align}
Here $a_t \in \mathbb{R}^d$ denotes the $t$-th sample of covariates, $y_t \in \mathbb{R}$ denotes the corresponding response, and the scalar $v_t$ denotes unobserved zero-mean i.i.d. noise such that $\EE[v_t^2] = \sigma_{v}^2$. Suppose $f:\mathbb{R} \to \mathbb{R}$ is a Lipschitz continuous and strongly monotone link function satisfying, for all $x, y$ in the domain of $f$, the pair of inequalities $|f(x) - f(y)| / | x - y | \leq L_f$ and $(f(x) - f(y)) (x - y) \geq \mu_f (x - y)^2$. Monotone GLMs model several applications spanning data analysis and signal recovery~\citep[see, e.g.][]{juditsky2020statistical,lou2025accurate,pananjady2021single, ilandarideva2024accelerated, zhu2025beyond,haque2025stochastic}. 

Now suppose that the covariates $(a_t)_{t = 1}^B$ are generated from an ergodic Markov chain with finite mixing time such that the $a_t$'s remain bounded almost surely, i.e., $\|a_t\| \leq D_a$ a.s. for all $t$. Moreover, suppose that $a_1$ is drawn from the stationary distribution $\pi$ of the chain, and let $\Lambda:= \EE[a_ta_t^\top]$ denote the stationary covariance satisfying $\Lambda \succcurlyeq \kappa I$ for some $\kappa > 0$.

For each sample covariate $a_t$, the corresponding observation $y_t$ is obtained using the GLM model from Eq.~\eqref{eq:GLM_model}. To pose the signal recovery problem in our framework, note that the $t$-th sample is given by the tuple $\xi_t = (a_t, y_t)$, and our Markovian buffer collects $B$ samples $(a_t, y_t)_{t = 1}^B$. Our goal is to estimate the underlying signal $x^*$ using samples from this buffer.
In order to solve for $x^*$, we formulate a stochastic VI problem with the operators defined as
\begin{align}
    \label{eq:glm_definition}
    F(x) = \EE_{ a \sim \pi}\left[a f(a^Tx)\right]-\EE_{a \sim \pi}\left[a f(a^{T}x^*)\right] \text{ and } \widetilde{F}(x,\xi_t) = a_t f(a_t^Tx)-a_t y_t.
\end{align}
By definition, note that 
 $F(x^*) = 0$, which implies that the VI condition in Ineq.~\eqref{eq:GMVI} is satisfied.

\paragraph{Verifying conditions on operator $F$.} Note that
\begin{align*}
    \|F(x) - F(y)\| &= \|\EE_{a\sim\pi}\left[af(a^Tx) - af(a^Ty)\right]\| \\ 
    &\leq \EE_{a\sim \pi}\left[ \|a\||f(a^Tx) - f(a^Ty)| \right] \\ 
    &\overset{\1}{\leq}  L_f\EE_{a\sim \pi}\left[ \|a\| \|a^T(x-y)\| \right] \\ 
    &\overset{\2}{\leq} L_f\EE_{a\sim \pi} \left[ \|a\|^2 \right] \|x-y\|  \\
    &\leq L_f \cdot D_a^2 \cdot \|x-y\|,
\end{align*}
where step $\1$ follows from the Lipschitzness of the link function $f$ and step $\2$ follows from the Cauchy--Schwarz inequality. Thus, the operator $F$ is Lipschitz with parameter $L_f \cdot D_a^2$.

We also have
\begin{align*}
    \langle F(x),x-x^* \rangle &= \EE_{a\sim \pi} \left[ \langle af(a^T x) -af(a^Tx^*),x-x^* \rangle \right] \\ 
    &= \EE_{a\sim \pi} \left[ (f(a^Tx)- f(a^Tx^*) a^T(x-x^*)) \right]\\
    &\overset{\1}{\geq} \mu_f \EE_{a\sim \pi} \left[  (a^T(x-x^*))^2 \right] \\ 
    &= \mu_f (x-x^*)^T \EE_{a\sim \pi}[aa^T] (x-x^*) \overset{\2}{\geq} \mu_f \cdot \kappa \cdot \|x-x^*\|^2,
\end{align*}
where step $\1$ follows from the strong monotonicity of $f$ and step $\2$ follows from the assumption $\Lambda \succcurlyeq \kappa I$. Thus, the operator $F$ is generalized strongly monotone with parameter $\mu_f \cdot \kappa$.

\subsection{Policy evaluation in reinforcement learning}
\label{sec:RL}
Policy evaluation is formulated using a Markov reward process (MRP), described by the tuple $(\mathcal{S},P,R,\gamma)$, where $\mathcal{S}$ denotes the state space, $P$ is the transition matrix, $R$ is the reward function and $\gamma \in (0,1)$ is the discount factor; see, e.g.,~\citet{puterman2014markov,bertsekas2019reinforcement} for more details and background. At each time step, the system moves from the current state $s \in \mathcal{S}$ to some new state $s' \in \mathcal{S}$ with probability $P(s,s')$ while the agent receives a reward of $R(s,s')$ corresponding to this transition. The expected instantaneous reward for state $s$ is defined as $r(s) = \sum_{s' \in S} R(s,s')P(s,s')$. The value function of a state is defined as the infinite-horizon discounted reward received starting in that particular state 
\begin{align*}
    v^{*}(s) = \EE\left[\sum_{t=0}^{\infty}\gamma^{t}R(s_t,s_{t+1})| s_{0}=s\right],
\end{align*}
and the goal of policy evaluation is to solve for $v^*$.
Letting $D = |\mathcal{S}|$ denote the number of states, the value function $v^{*}$ and the reward function $r$ are $D$-dimensional vectors of reals, and the following Bellman equation is satisfied
\begin{align}\label{eq:Bellman}
    v^{*}  = r + \gamma P v^{*}.
\end{align}
We assume that the Markov chain is ergodic, thus there is a unique stationary distribution $\pi$ with strictly positive entries satisfying $\pi P = \pi$. 
Let $\Pi := \text{diag}(\pi_1,\pi_2,...,\pi_{D}) \in \mathbb{R}^{D\times D}$.

In modern variants of this problem with large state spaces,
it is common to seek approximate solutions for the Bellman Eq.~\eqref{eq:Bellman}. Specifically, we choose a $d$-dimensional subspace, defined as $\mathbb{S} := \text{span}\{\psi_1,\hdots,\psi_d\}$ for $d$ linearly independent basis vectors and attempt to solve for $\overline{v} \in \mathbb{S}$ through the 
\emph{projected} Bellman equation, which can be written in matrix-vector notation as follows.

Defining the matrix $\Psi := [\psi_1,\ldots,\psi_d]^T$, the equation reads
\begin{align*}
    \Psi \Pi (\bar{v}-r-\gamma P\bar{v})=0,
\end{align*}
and since $\bar{v} := \Psi^T \bar{\theta}$ for some $\overline{\theta}$, the projected Bellman equation can be equivalently written in $\mathbb{R}^d$ as
\begin{align}
    \label{eq:projected_bellman_matrix}
    \Psi \Pi \Psi^T \bar{\theta}=\Psi \Pi r+\Psi \Pi\gamma P\Psi^T \bar{\theta}.
\end{align}
For simplicity, we define a set of orthonormal basis vectors for the subspace $\mathbb{S}$, i.e., $\Phi \equiv [\phi_1,\ldots,\phi_d] := Q^{-1/2}\Psi$ where $Q_{i,j} := \langle \psi_i,\psi_j \rangle$. Denote $ \eigmax (Q)$ and $\eigmin(Q)$ as the maximum and the minimum eigenvalues of the matrix $Q$, respectively. We can solve the projected Bellman Eq.~\eqref{eq:projected_bellman_matrix} by viewing it as a VI with the following operators:
\begin{align*}
    F(\theta) = \Psi \Pi (\Psi^T \theta- r-\gamma P\Psi^T \theta) \text{ and } \widetilde{F}(\theta, \xi_k) = \left( \left\langle \psi(s_k),\theta \right\rangle - R(s_k,s_{k+1})-\gamma \left\langle\psi(s_{k+1}),\theta  \right\rangle \right)\psi(s_k).
\end{align*}
 From the definition of the operator $F$ above, we observe that for $\bar{\theta}$, which is the solution to the projected Bellman Eq.~\eqref{eq:projected_bellman_matrix}, $F(\bar{\theta}) = 0$. This means that the VI condition in Ineq.~\eqref{eq:GMVI} is satisfied for $\bar{\theta}$. 

In the Markovian setting, we are given a trajectory from the Markov reward process \\ \mbox{$(s_t,s_{t+1},R(s_t,s_{t+1}))_{t\geq0}$}, where $s_0$ is drawn from the stationary distribution.
From these, we form a Markovian buffer containing samples
\begin{align}
    \label{eq:RL_operator}
    \xi^{B} = ( \xi_t = (\psi(s_t),\psi(s_{t+1}),R(s_t,s_{t+1})) )_{t=1}^B.
\end{align}
Our goal is to estimate $\bar{\theta}$ using samples from the Markovian buffer.

\paragraph{Verifying conditions on operator $F$.} It can be readily verified that $F$ is a $(1+\gamma)\cdot \lambda_{max}(Q)$-Lipschitz continuous and $(1-\gamma)\cdot \lambda_{min}(Q)$-strongly monotone operator \cite[see, e.g.][]{li2023accelerated}.

\section{Multiscale experience replay}
\label{sec:MER}
Having provided examples that motivate our work, we now introduce our main algorithm for solving stochastic VIs with a Markovian buffer. 
As a stepping stone to our procedure, recall that standard stochastic approximation (SA) with sample-skipping (termed CTD in~\citet{tianjiao2022}) is given by
\begin{align}\label{eq:TD_update}
    x_{t+1} = \arg \min_{x \in X}\left\{ \eta \langle \widetilde{F}(x_{t},\xi_{t \bar\tau}),x \rangle + \frac{1}{2}\|x_{t}-x\|^2\right\}, \text{ for } t=1,2,\ldots .
\end{align}
This update uses samples separated by $\bar\tau$ steps, a value that should ideally be chosen proportional to the mixing time of the Markov chain. However, as previously mentioned and illustrated in Figure~\ref{fig:compare}, misspecifying the skipping parameter $\bar{\tau}$ can lead to both statistical and computational inefficiencies.

To resolve this issue, we propose the following multiscale experience replay (MER) algorithm.
Given a memory buffer of size $B$, the MER algorithm judiciously chooses elements of this buffer by keeping the ``active'' samples used in the SA updates as far apart as possible. The method is formally presented as Algorithm~\ref{alg:two}. 

\begin{algorithm}[H]  \caption{Multiscale Experience Replay}  
		\label{alg:two}   
		\begin{algorithmic} 
			\STATE{\textbf{Input}: Memory Buffer $\{\xi_1,\xi_2,\hdots,\xi_{B}\}$, Total number of epochs $K$.}
			\FOR{$ k = 1, \ldots, K$}
			\STATE{Re-initialize the variable $\initialiterate \in X$ independently of the past.}
                \STATE{Define the replay gap $\tau_k= \frac{B}{2^k}$ and the sample size used in $k$-th epoch $T_k = 2^k$.}
			\FOR{$t=1, \ldots, T_k$}
				\STATE{Using the sample $\xi_{t\tau_k}$, perform one step of SA:
        \begin{align}\label{eq:MER_update}
             \updatediterate = \arg \min_{x\in X} \eta_k \langle \widetilde{F}(\iterate,\xi_{t\tau_k}),x \rangle+\frac{1}{2} \|x-\iterate\|^2.
         \end{align}}
                \STATE{Delete the sample $\xi_{t\tau_k}$ and append the new sample to the end of the  buffer.}
			\ENDFOR
			\ENDFOR
		\end{algorithmic}
	\end{algorithm}

Operationally, the memory buffer is utilized over multiple passes (epochs), where we re-initialize the iterate at the beginning of each epoch. In epoch $k$, the replay gap, defined as $\tau_k = B / 2^k$, sets the separation between samples used to update the iterate within that epoch. The total number of samples used in the $k$-th epoch is thus $T_k = 2^k$.
The replay gap $\tau_k$ progressively decreases the scale at which the algorithm operates. If $\tau_k =\bar \tau$ for some $k$, then the algorithm executes the skipped SA update~\eqref{eq:TD_update} in that epoch. This multiscale process continues until the final epoch recovers the standard serial SA update, which proceeds without any sample skipping.

Note that as stated, Algorithm \ref{alg:two} is designed for the online setting where we continue to observe new samples $\xi_{B+1}, \xi_{B+2}, \ldots$ as we run iterations and wish to store them in the buffer. At each iteration, the algorithm processes the current sample and removes it from the buffer, appending a newly observed sample to the end of the buffer. This algorithm can be easily adapted to an offline setting where no new samples are available by omitting the step that deletes the current sample. In this case, the buffer remains static, allowing the same data points to be reused across multiple epochs. All our results also extend to the static setting,

\section{Theoretical results}
\label{sec:theoretical_results}
In this section, we present convergence guarantees for the MER method proposed in Algorithm~\ref{alg:two}. 
We start by introducing and discussing the assumptions that underlie our analysis.
\subsection{Assumptions and shorthand notation}\label{subsec:assumptions}
We begin by making a mild assumption regarding the optimal solution $x^*$.
\begin{assumption}
    \label{assmp:optimal_ball}
    The optimal solution $x^*$ lies in an $\ell_2$ ball of known radius $D/2$, i.e. $x^* \in \mathbb{B}_2(D/2)$. Thus, we initialize iterates at the beginning of each epoch within this ball independently of the past, so that $\initialiterate \in \mathbb{B}_2(D/2)$.
\end{assumption}

We now make the following mild assumptions regarding the  stochastic operator $\widetilde{F}$, and we will verify in Section \ref{sec:examples} that these assumptions hold in the two specific examples that we introduced in Section~\ref{subsec:examples_intro}. 
\begin{assumption}
\label{assmp:lipscitz1}
    The stochastic operator $\widetilde{F}$ is a Lipschitz continuous map in its first argument, i.e., there exists a $\widetilde{L}_1 >0$, such that for any $\xi \in \Xi$, we have
\begin{align}\label{eq:lipscitz1}
    \|\widetilde{F}(x_1,\xi)-\widetilde{F}(x_2,\xi)\| \leq \widetilde{L}_1\|x_1-x_2\|.
\end{align}
\end{assumption}

\begin{assumption}[Variance of stochastic operator]
\label{assmp:state_dependent_noise}
    There exist positive scalars $\sigma, \zeta \geq 0$ such that for every point $x \in X$ and pair of natural numbers $t' \geq t$, we have with probability $1$:
\begin{align}\label{eq:state_dependent_noise}
    \EE\left[ \big\| \widetilde{F}(x,\xi_{t'})-\EE [ \widetilde{F}(x,\xi_{t'}) | \Fspace_{t-1}] \big\|^2 \big| \Fspace_{t-1} \right] \leq \frac{\sigma^2}{2} + \frac{\zeta^2}{2}\|x-x^*\|^2. 
\end{align}
\end{assumption}
To motivate our next two assumptions, note that if $\xi_{t}$ is drawn from the stationary distribution independently of $x$, then the stochastic operator $\widetilde F$ is an unbiased estimator of $F$, satisfying $\EE [ \widetilde{F}(x,\xi_{t'})] = F(x)$. However, since the noise is Markovian in our setting, the stochastic operator  is biased conditionally on the past, i.e., for $t' \geq t$, we have $\EE [ \widetilde{F}(x,\xi_{t'}) | \Fspace_{t-1}] \neq F(x)$.
Our next two assumptions ensure that the Markov chain mixes fast enough so that the bias at the optimal point $x^*$ can be properly controlled, and the conditional bias of the difference of two operator evaluations is also controlled.
\begin{assumption}[Bias at the optimal solution]
\label{assmp:optimal_condition}
    There exist constants $\cm>0$ and $\rho_1 \in (0,1)$ such that for every pair of natural numbers $t' \geq t$, the following inequality holds with probability 1:
\begin{align}\label{eq:optimal_condition}
    \|F(x^*)-\EE[\widetilde{F}(x^*,\xi_{t'})|\Fspace_{t-1}]\| \leq \cm \cdot \rho_1^{t' - t}.
\end{align}
\end{assumption}
\begin{assumption}[Bias of difference]
\label{assmp:mixing_norm_revised}
    There exist constants $\cp>0$ and $\rho_2 \in (0,1)$ such that for every pair of points $x,y \in X$ and pair of natural numbers $t' \geq t$, the following holds with probability $1$:   \begin{align}\label{eq:mixing_norm_revised}
    \|F(x)-\EE[\widetilde{F}(x,\xi_{t'})|\Fspace_{t-1}]-F(y)+\EE[\widetilde{F}(y,\xi_{t'})|\Fspace_{t-1}]\| \leq \cp \cdot \rho_2^{t' - t} \cdot \|x-y\|.
\end{align}
\end{assumption}
Given their close connection to the mixing properties of the Markov chain, we refer to $\rho_1$ and $\rho_2$ in the above assumptions as \emph{mixing parameters}.
We define $\rho \defn \max\{\rho_1,\rho_2\} \in (0,1)$, so that both Assumption \ref{assmp:optimal_condition} and Assumption~\ref{assmp:mixing_norm_revised} are satisfied with the common mixing parameter $\rho \in (0,1)$. We also define 
\begin{align}\label{eq:C_defn}
    C \defn \frac{\cm}{40}+\cp,
\end{align}
that combines the multiplicative mixing constants from Assumption~\ref{assmp:optimal_condition} and Assumption~\ref{assmp:mixing_norm_revised}.

Having stated our assumptions, we proceed to define some additional notation to facilitate the presentation of our main theorem and subsequent discussions.
We start by defining the following time scale parameter $\tau_{M}$, which characterizes the number of time steps needed for the bias of the operator $\widetilde F$ to be sufficiently reduced,
\begin{align}
    \label{eq:surrogate_mixing_time}
    \tau_{M} \defn \frac{\log ({18C}/{\mu})}{\log ({1}/{\rho})}.
\end{align}
This parameter is related to how fast the Markov chain mixes, and also involves the strong monotonicity scalar $\mu$. Throughout the paper, we refer to $\tau_{M}$ as the ``effective mixing time''. 
We also define 
\begin{align}\label{eq:alpha_definition}
    \alpha_k \defn \frac{\tau_M}{\tau_k}.
\end{align}
Note that the parameter $\alpha_k$ evolves with $k$ monotonically. To develop intuition, it is useful to divide this evolution into the following two stages:
\begin{itemize}
    \item ($\tau_k > \tau_M$) When the replay gap is greater than the effective mixing time, we have $\alpha_k <1$. Roughly speaking, the samples used in such an epoch $k$ should be thought of as nearly i.i.d. due to the mixing conditions in Assumption~\ref{assmp:optimal_condition} and Assumption~\ref{assmp:mixing_norm_revised}.
    \item ($\tau_k \leq \tau_M$) When the replay gap is less than the effective mixing time, we have $\alpha_k \geq 1$. The samples in this case are inevitably correlated, and this correlation becomes stronger as $\tau_k$ decreases.
\end{itemize}
Therefore, the parameter $\alpha_k$ encapsulates the shift from (almost) independent to correlated samples as the epochs advance. 

To define our final pieces of notation, recall the parameter $D$ from Assumption~\ref{assmp:optimal_ball} and define 
\begin{align}\nonumber
    &\bar{L} \defn L + \widetilde{L}_1,\;M \defn \max \left\{2+ \frac{4C}{\mu},\frac{40\cm}{\mu}\right\}\\ \label{eq:definitions}
    &p_k \defn 1+ (\log T_k)^{-1}\cdot\log\left(\frac{ \mu^2 M\max\{1,D^2\}}{3 (6\sigma^2+ 4 \|F(x^*)\|^2)}\right).
\end{align}
These parameters will simplify notation in our theorem statements. Operationally, the parameter $\bar{L}$ combines the Lipschitz constants of the operators $F$ and $\widetilde{F}$.

\subsection{Convergence guarantees for MER}\label{subsec:MER_convergence}
With the assumptions stated and parameters defined in Section \ref{subsec:assumptions}, we are in a position to provide convergence guarantees for the MER method.
\begin{theorem}\label{thm:main_error}
    Suppose Assumptions \ref{assmp:optimal_ball}-\ref{assmp:mixing_norm_revised} hold. If the buffer size satisfies the lower bound \\
    \mbox{$B \geq \frac{32 p_1 \tau_M(\zeta^2 + 16 \bar{L}^2 )}{3\mu^2}\log \left( \frac{32 p_1 \tau_M(\zeta^2 + 16 \bar{L}^2)}{3\mu^2} \right)$} and we have the step-size schedule 
    \begin{align}
    \label{eq:step_size}
        \eta_k \defn
\min \left\{\frac{3 \mu}{16(\zeta^2 + 16\bar{L}^2)},\frac{p_k\log T_k}{\mu T_k} \right\} \quad \text{ for } k = 1, \ldots, K,
    \end{align}
    then
    \begin{align}
    \nonumber
        \EE\|\finaliterate - x^*\|^2&\leq M \left(1+\frac{3 \mu^2}{8\left( \alpha_k+1 \right)(\zeta^2 + 16 \bar{L}^2)} \right)^{-T_k}\left(D^2+1  \right)\\ \nonumber 
        &\qquad+ \frac{20  \cm\rho^{\tau_M+\tau_k-1}}{ \mu}\left(1+\frac{3}{128\left( \alpha_k+1 \right)} \right) \\ 
        &\qquad + \frac{3(2+p_k\log T_k)}{\mu^2 T_k} \left(\alpha_k +1 \right)  (6\sigma^2+25\cm^2 \rho^{2(\tau_k-1)}+ 4 \|F(x^*)\|^2). \label{eq:main_bound}
    \end{align}
\end{theorem}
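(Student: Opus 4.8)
The plan is to analyze a single epoch $k$ in isolation and then observe that re-initialization decouples the epochs, so it suffices to bound $\EE\|\finaliterate - x^*\|^2$ given that $\initialiterate$ is chosen independently of the past with $\initialiterate\in\mathbb{B}_2(D/2)$. Within epoch $k$ the iteration \eqref{eq:MER_update} is a projected stochastic approximation step with constant step-size $\eta_k$ and samples $\xi_{\tau_k}, \xi_{2\tau_k},\ldots,\xi_{T_k\tau_k}$ that are spaced $\tau_k$ apart in the Markovian buffer. First I would write the standard one-step contraction for the prox update: using the optimality condition for $\updatediterate$ and generalized strong monotonicity \eqref{eq:monotonicity}, one gets an inequality of the form
\begin{align*}
\|\updatediterate - x^*\|^2 \leq (1-2\mu\eta_k)\|\iterate - x^*\|^2 - \|\updatediterate - \iterate\|^2 + 2\eta_k\langle F(\iterate) - \widetilde{F}(\iterate,\xi_{t\tau_k}),\, \updatediterate - x^*\rangle.
\end{align*}
The cross term is then split as $\widetilde F(\iterate,\xi_{t\tau_k})$ versus its conditional expectation, and the conditional expectation versus $F(\iterate)$; the latter gap is exactly what Assumptions~\ref{assmp:optimal_condition} and~\ref{assmp:mixing_norm_revised} control. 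Writing $b_t \defn \EE[\widetilde F(\iterate,\xi_{t\tau_k})\mid\mathcal F_{(t-1)\tau_k}] - F(\iterate)$ and using the triangle-inequality decomposition through $x^*$, Assumption~\ref{assmp:mixing_norm_revised} bounds the $\iterate$-dependent part by $\cp\rho^{\tau_k-1}\|\iterate - x^*\|$ and Assumption~\ref{assmp:optimal_condition} bounds the part at $x^*$ by $\cm\rho^{\tau_k-1}$ — but with one subtlety: $\iterate$ is not measurable with respect to $\mathcal F_{(t-1)\tau_k}$ alone unless we condition at the start of the "block," so I would condition on $\mathcal F_{(t-1)\tau_k}$ and use that $\iterate$ depends only on samples with index $\le (t-1)\tau_k$ together with the independent re-initialization, which makes $\iterate$ measurable and lets the assumptions apply with gap exactly $\tau_k - 1$ (really $t\tau_k - (t-1)\tau_k = \tau_k$, so $\rho^{\tau_k - 1}$ after the $t'\ge t$ convention). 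Substituting $\eta_k\cdot\cp\rho^{\tau_k-1}$ times the cross term and using $2ab\le \epsilon a^2 + \epsilon^{-1}b^2$ absorbs the $\|\iterate-x^*\|$ factor into the contraction and the $\cm$ factor into an additive term; the variance term from Assumption~\ref{assmp:state_dependent_noise} contributes $\eta_k^2(\sigma^2 + \zeta^2\|\iterate - x^*\|^2 + \ldots)$ after expanding $\|\widetilde F - F\|^2 \lesssim \|\widetilde F(\iterate,\cdot) - \widetilde F(x^*,\cdot)\|^2 + \|\widetilde F(x^*,\cdot)\|^2$, using Assumption~\ref{assmp:lipscitz1} and the fact that $\EE\|\widetilde F(x^*,\xi)\|^2 \lesssim \sigma^2 + \|F(x^*)\|^2 + \cm^2\rho^{2(\tau_k-1)}$.

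Collecting terms, the one-step recursion takes the schematic form $\EE\|\updatediterate - x^*\|^2 \le (1-q_k)\,\EE\|\iterate - x^*\|^2 + \eta_k^2 A_k + \eta_k B_k\rho^{\tau_k-1}$, where $q_k \asymp \mu\eta_k - \eta_k^2(\zeta^2 + \bar L^2)$ and the appearance of $\alpha_k+1 = \tau_M/\tau_k + 1$ in the final bound comes from how $\rho^{\tau_k-1}$ compares to $\rho^{\tau_M}\asymp \mu/(18C)$: when $\tau_k \ge \tau_M$ the bias terms are already below the "noise floor" and the effective per-step contraction is $\asymp \mu^2/((\alpha_k+1)(\zeta^2+\bar L^2))$, while when $\tau_k < \tau_M$ one pays the $\alpha_k$ inflation. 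Next I would unroll the recursion over $t = 1,\ldots,T_k$: the geometric factor gives the first term of \eqref{eq:main_bound} with base $(1 + \tfrac{3\mu^2}{8(\alpha_k+1)(\zeta^2+16\bar L^2)})^{-T_k}$ applied to $\EE\|\initialiterate - x^*\|^2 + 1 \le D^2 + 1$ (the "$+1$" and the prefactor $M$ track lower-order constants and the $\zeta^2$-induced blow-up, which is why Assumption~\ref{assmp:optimal_ball} and the buffer lower bound on $B$ — ensuring $\tau_M/\tau_k$ stays controlled and $\eta_k$ stays in the stable regime — are needed), and the geometric sum of the additive noise term $\eta_k^2 A_k$ over $t$ contributes $\asymp \eta_k A_k/\mu$, which with the choice $\eta_k \asymp \tfrac{p_k\log T_k}{\mu T_k}$ yields the third line of \eqref{eq:main_bound} with the $(2 + p_k\log T_k)/(\mu^2 T_k)$ shape; the bias term $\eta_k B_k\rho^{\tau_k-1}$ summed geometrically gives $\asymp B_k\rho^{\tau_k-1}/\mu$, and pulling out one more factor $\rho^{\tau_M}\lesssim \mu/C$ produces the $\rho^{\tau_M+\tau_k-1}$ in the second line. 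The definition of $p_k$ in \eqref{eq:definitions} is reverse-engineered precisely so that the two-phase step-size in \eqref{eq:step_size} makes the dominant geometric term match the dominant noise term; I would verify this matching at the cross-over epoch.

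The main obstacle I anticipate is the bookkeeping around \emph{measurability and the block structure}: because consecutive active samples are only $\tau_k$ apart (not a full mixing time), I cannot treat $\widetilde F(\iterate,\xi_{t\tau_k})$ as conditionally unbiased, and I must be careful that $\iterate$ is $\mathcal F_{(t-1)\tau_k}$-measurable so that Assumptions~\ref{assmp:optimal_condition}–\ref{assmp:mixing_norm_revised} apply with the correct exponent — this is exactly where the $\tau_k$-separation of samples is exploited, and any slippage (e.g. forgetting that the buffer updates by appending new samples, or double-counting a sample's contribution to both $\iterate$ and the current update) would break the argument. A secondary difficulty is tracking all the absolute constants so that the contraction coefficient never degenerates: one must keep $\eta_k(\zeta^2+16\bar L^2) \lesssim \mu$ (the first branch of \eqref{eq:step_size}) \emph{and} simultaneously keep the $\alpha_k$-inflated contraction positive, which forces the buffer lower bound $B\gtrsim p_1\tau_M(\zeta^2+\bar L^2)/\mu^2 \cdot\log(\cdot)$; verifying that this single lower bound suffices uniformly over all epochs $k\le K$ (the worst case being the last epoch, where $\tau_k$ is smallest and $\alpha_k$ largest) is the most delicate constant-chasing step. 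Everything else — the prox inequality, the variance expansion, the geometric unrolling — is routine once the conditioning is set up correctly.
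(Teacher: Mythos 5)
There is a genuine gap, and it sits exactly at the point you flag as hand-waving. Your plan conditions only one block back, on $\mathcal{F}_{(t-1)\tau_k}$, so the per-step bias coefficients you obtain are $\cm\rho^{\tau_k-1}$ and $\cp\rho^{\tau_k-1}$. This is fine in the early epochs, but in the later epochs where $\tau_k < \tau_M$ (i.e.\ $\alpha_k>1$) it breaks the argument: by the very definition $\tau_M = \log(18C/\mu)/\log(1/\rho)$, we have $C\rho^{\tau_k-1} \gg \mu$ when $\tau_k \ll \tau_M$, so the term $2\eta_k\cp\rho^{\tau_k-1}\|\iterate-x^*\|^2$ swamps the contraction $2\mu\eta_k\|\iterate-x^*\|^2$ and the one-step recursion no longer contracts at all. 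Your sentence ``when $\tau_k<\tau_M$ one pays the $\alpha_k$ inflation'' has no derivation behind it in this scheme — with single-block conditioning there is no mechanism that converts a bias coefficient larger than $\mu$ into a benign multiplicative $(\alpha_k+1)$ factor on the variance term. Relatedly, your claim that the $\rho^{\tau_M+\tau_k-1}$ in the second line of \eqref{eq:main_bound} is obtained by ``pulling out one more factor $\rho^{\tau_M}$'' from a geometric sum of $\eta_k\cm\rho^{\tau_k-1}$ terms is not valid: summing a per-step bias of order $\rho^{\tau_k-1}$ can only give $\cm\rho^{\tau_k-1}/\mu$, and the exponent cannot be improved after the fact.

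The paper's proof (Proposition~\ref{recursion_proposition} and Claim~\ref{claim:inner_prod}) resolves exactly this issue with a past-iterate decomposition that your proposal omits. Instead of comparing $\widetilde{F}(\iterate,\xi_{t\tau_k})$ with its conditional mean given $\mathcal{F}_{(t-1)\tau_k}$, one writes the error term through $\widetilde{F}(\pastiterate,\xi_{t\tau_k})$ for the iterate $\pastiterate$ taken $\kappa$ steps earlier, with $\kappa = \tau_M/\tau_k$ (or $t-1$ for small $t$). The conditioning then reaches back $(\kappa+1)\tau_k-1 \geq \tau_M+\tau_k-1$ steps of the underlying chain, so Assumptions~\ref{assmp:optimal_condition}--\ref{assmp:mixing_norm_revised} give bias coefficients at most $\cm\rho^{\tau_M+\tau_k-1}$ and $C\rho^{\tau_M+\tau_k-1}\leq \mu/18$ — small relative to the contraction in every epoch, and the source of the $\rho^{\tau_M+\tau_k-1}$ exponent in the theorem. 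The price is the drift terms $\|\iterate-\pastiterate\|$ and $\|\updatediterate-\pastiterate\|$, which the paper controls via the prox-step bound $\|\updatediterate-\iterate\|\leq 2\eta_k\|\widetilde{F}(\iterate,\xi_{t\tau_k})\|$ summed over the $\kappa\approx\alpha_k$ intermediate steps; this is precisely where the $(\alpha_k+1)$ inflation of the stochastic error, and the weighted-sum bookkeeping with $A_t$, $G_t$, $H$ and the verification $A_t\geq G_{t+1}$, come from. Your measurability discussion (that $\iterate$ is $\mathcal{F}_{(t-1)\tau_k}$-measurable) is correct but is not the crux; the obstruction is quantitative, and without the multi-block conditioning plus drift control your recursion cannot produce the stated bound for the epochs with $\tau_k<\tau_M$, which are exactly the regime the theorem is designed to cover.
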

See Section~\ref{subsec:proof_theorem_1} for the detailed proof of this theorem.

A few remarks are in order. First, note that in order to set the step-size schedule in Eq.~\eqref{eq:step_size}, we require access to some parameters\footnote{There is some dependence on the pair $(C, C_M)$, but these quantities are not as critical for mixing as $\rho$ and $\tau_M$.} that depend on the population operator $F$, but crucially not on $\rho$ or $\tau_M$, which define the mixing properties of the chain. This stands in stark contrast to other algorithms in this literature~\citep[e.g.][]{tianjiao2022}, and reveals a key advantage of having access to a memory buffer as in our setting.

Let us next discuss the error terms in the guarantee~\eqref{eq:main_bound}. The first term on the RHS of Ineq. \eqref{eq:main_bound}, which converges linearly, corresponds to the deterministic error of the MER method -- this term would exist even in the absence of stochasticity in the problem. It is likely that the dependence on the condition number $\bar{L}/\mu$ in this term can be improved by applying the operator extrapolation technique proposed in \citet{kotsalis2022simple, tianjiao2022}; as mentioned before, obtaining optimal accelerated convergence of the deterministic error is beyond the scope of our paper. 
The second term on the RHS of Ineq.~\eqref{eq:main_bound} is due to the bias at the optimal point $x^*$. 
This term is typically negligible since it decays exponentially with $ \tau_{M}$.

As mentioned earlier, our focus is on the last term in the RHS of Ineq.~\eqref{eq:main_bound}, which is a bound on the stochastic error. Specifically, 
when $B$ is at least of the order $T_k \tau_M$, or equivalently when $\alpha_k = \mathcal{O}(1)$,
this stochastic error term simplifies to 
\begin{align} \label{eq:SE-bound}
\order\left(\frac{3(2+p\log T_k)}{ \mu^2T_k}  \left(6\sigma^2+ 4 \|F(x^*)\|^2\right)\right),
\end{align}
which nearly matches the stochastic error term in the setting where we process $T_k$ i.i.d. samples. 
Conversely, when $\alpha_k \gg 1$, then the stochastic error~\eqref{eq:SE-bound} is inflated by a multiplicative factor of $\alpha_k$, depending on the ratio of the effective mixing time and the replay gap. However, note that we always have $\alpha_k \leq \tau_M$, so this inflation factor can at most be on the order of the effective mixing time. In fact, except in the last epoch when $\alpha_k = \tau_M$, MER outperforms the standard Markovian SA in \cite{tianjiao2022} by a multiplicative factor of the replay gap $\tau_k$. We also recover the guarantees in  \citet{tianjiao2022} in the last epoch when the replay gap satisfies $\tau_k = 1$. 

\subsection{Tracking the i.i.d. error}
\label{sec:following_iid}
In the previous section, we demonstrated that when the replay gap $\tau_k$ is larger than the effective mixing time $\tau_M$, the MER algorithm has a rate of convergence resembling that of i.i.d. sampling with $T_k$ samples. While encouraging, this is simply a comparison of upper bounds and leaves open the question of whether the i.i.d. behavior is actually emulated by MER.  
In this section, we analyze if and how the error of the MER algorithm --- especially in the initial epochs --- tracks the error of running simple SA on an i.i.d. sample sequence. 

Before stating such a guarantee, we require some additional definitions and assumptions. Begin by defining the Wasserstein distance on a metric space, $(\Xi,r)$. For a pair $(\mu,\mu')$ of probability distributions on $\Xi$, let $\mathcal{P}(\mu,\mu')$ denote the space for all possible couplings of $\mu$ and $\mu'$. For any $q\geq1$, the Wasserstein-$q$ distance between $\mu$ and $\mu'$ is given by 
\begin{align}
    \label{defn:wasserstein_dist}
    \mathcal{W}_{q,r}(\mu,\mu') = \left\{ \inf_{\gamma \in \mathcal{P}(\mu,\mu')}\int_{\Xi \times \Xi}r(x,y)^q d\gamma(x,y) \right\}^{1/q}.
\end{align}

We take the metric $r$ to be the Euclidean distance throughout this paper, unless stated otherwise. 
We now make the assumption that the chain mixes fast in Wasserstein distance. 
\begin{assumption}[Wasserstein mixing]
    \label{assmp:wasserstein_mixing}
    With the effective mixing time $\tau_M$ defined in Eq.~\eqref{eq:surrogate_mixing_time}, for any $x,y \in X$, there exists universal positive constant $c_0$ such that
    \begin{align}
        \label{eq:wasserstein_mixing}
        \mathcal{W}_{1}(\delta_x P^{\tau_M},\delta_y P^{\tau_M}) \leq c_0 \|x-y\|,
    \end{align}
    where $\delta_x$ denotes the distribution that places all its mass on $x$ and recall that $P$ is the transition matrix of the Markov chain.
\end{assumption}
We also require an additional Lipschitzness assumption on the second argument of the stochastic operator $\widetilde{F}$.
\begin{assumption}\label{assmp:lipscitz2}
    The stochastic operator $\widetilde{F}$ is a Lipschitz continuous map in its second argument, i.e., there exists a positive constant $\widetilde{L}_2$, such that for any $x\in X$, we have
\begin{align}\label{eq:lipscitz2}
    \|\widetilde{F}(x,\xi_1)-\widetilde{F}(x,\xi_2)\| \leq \widetilde{L}_2\|\xi_1-\xi_2\|.
\end{align}
\end{assumption}

In the initial epochs, the replay gap $\tau_k$ is large. Concretely, to state our guarantee below, suppose that for some $\beta > 1$ we have $\tau_k = \beta \tau_M$.
Let $x_{T+1}$ denote the iterate obtained after applying $T$-steps of the MER update of Eq.~\eqref{eq:MER_update} at epoch $k$, starting from some initialization $x_1$. In particular, conditional on $x_1$, the randomness in $x_{T+1}$ arises purely from the Markovian samples $(\xi_{k \beta \tau_M})_{k=1}^{T}$ used in the updates. To compare this iterate with an i.i.d. analogue, define $\widetilde{x}_{T+1}$ as the iterate obtained by starting from the same initialization $x_1$ and applying $T$-steps of the update 
\[
\widetilde{x}_{t + 1} = \arg \min_{x \in X}\left\{ \eta \langle \widetilde{F}(\widetilde{x}_{t},\widetilde{\xi}_{t \beta \tau_M}),x \rangle + \frac{1}{2}\|\widetilde{x}_{t}-x\|^2\right\}, \text{ for } t=1,2,\ldots,
\]
where $(\widetilde{\xi}_{k \beta \tau_M})_{k=1}^{T}$ denotes an i.i.d. sequence of samples drawn from the stationary distribution $\pi$. Let us define the errors in the two cases as $\Delta_{T+1} \defn x_{T+1}-x^*$ and $\widetilde{\Delta}_{T+1} \defn \widetilde{x}_{T+1}-x^*$, respectively. 

\begin{theorem}\label{thm:emulate_iid}
Suppose Assumptions \ref{assmp:lipscitz1},\ref{assmp:wasserstein_mixing} and \ref{assmp:lipscitz2} hold. If the step-size parameter is chosen to satisfy $\eta \leq \frac{1}{\widetilde{L}_1 T}$, then 
    \begin{align}\label{eq:lipschitz_bound}
    \left|\EE\|\Delta_{T+1}\| - \EE\|\widetilde{\Delta}_{T+1}\|\right| \leq 3c_0 \cdot  \frac{ \widetilde{L}_2}{\widetilde{L}_1} \cdot \sqrt{T} \cdot e^{-\beta}.
\end{align}
Consequently, if $\beta = \log T^{3/2}$, then the difference between the algorithmic errors is lower order and satisfies
\begin{align}\label{corollary:emulate_iid}
    \left|\EE\|\Delta_{T+1}\| - \EE\|\widetilde{\Delta}_{T+1}\|\right| \leq  \frac{3c_0 \widetilde{L}_2}{T\widetilde{L}_1}.
\end{align}
\end{theorem}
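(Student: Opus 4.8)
# Proof Proposal for Theorem~\ref{thm:emulate_iid}

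\textbf{Overall strategy: couple the Markovian and i.i.d. sample sequences and propagate the coupling error through the SA recursion.} The plan is to build a coupling between the Markovian samples $(\xi_{t\beta\tau_M})_{t=1}^T$ used by MER and the i.i.d. samples $(\widetilde{\xi}_{t\beta\tau_M})_{t=1}^T$ drawn from $\pi$, and then to argue inductively that the two iterate sequences stay close. The key observation is that consecutive ``active'' samples in epoch $k$ are separated by $\tau_k = \beta\tau_M$ Markov steps, so by Assumption~\ref{assmp:wasserstein_mixing} applied $\beta$ times (tensorizing the one-step contraction $\mathcal{W}_1(\delta_x P^{\tau_M}, \delta_y P^{\tau_M}) \le c_0\|x-y\|$, though here we want contraction toward stationarity rather than between point masses), the conditional law of $\xi_{t\beta\tau_M}$ given the past is within Wasserstein-$1$ distance $\approx c_0^\beta \le e^{-\beta}$ (taking $c_0 \le e^{-1}$, or absorbing constants) of the stationary distribution $\pi$. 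I would first set up, for each step $t$, an optimal $\mathcal{W}_1$ coupling of the conditional law of the next Markovian sample with $\pi$, so that $\EE\|\xi_{t\beta\tau_M} - \widetilde{\xi}_{t\beta\tau_M}\| \lesssim e^{-\beta}$ under this joint construction.

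\textbf{Step 1: one-step perturbation bound for the SA map.} Fix the step $t$. Write $\updatediterate = \Pi_X(\iterate - \eta_k \widetilde{F}(\iterate, \xi_{t\tau_k}))$ (using the proximal/projection form of Eq.~\eqref{eq:MER_update}) and similarly for $\widetilde{x}$. Since the Euclidean projection is nonexpansive, I would bound
\[
\|x_{t+1} - \widetilde{x}_{t+1}\| \le \|x_t - \widetilde{x}_t\| + \eta\,\|\widetilde{F}(x_t,\xi) - \widetilde{F}(\widetilde{x}_t,\widetilde{\xi})\|,
\]
and then split the operator difference via the triangle inequality into $\|\widetilde{F}(x_t,\xi) - \widetilde{F}(\widetilde{x}_t,\xi)\| + \|\widetilde{F}(\widetilde{x}_t,\xi) - \widetilde{F}(\widetilde{x}_t,\widetilde{\xi})\|$, controlled by $\widetilde{L}_1\|x_t - \widetilde{x}_t\|$ (Assumption~\ref{assmp:lipscitz1}) and $\widetilde{L}_2\|\xi - \widetilde{\xi}\|$ (Assumption~\ref{assmp:lipscitz2}) respectively. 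Taking expectations over the coupling and writing $e_t \defn \EE\|x_t - \widetilde{x}_t\|$, this yields the recursion
\[
e_{t+1} \le (1 + \eta\widetilde{L}_1)\, e_t + \eta\,\widetilde{L}_2 \cdot \EE\|\xi - \widetilde{\xi}\| \le (1 + \eta\widetilde{L}_1)\, e_t + \eta\,\widetilde{L}_2\, c_0\, e^{-\beta},
\]
with $e_1 = 0$ since both chains start from the same $x_1$.

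\textbf{Step 2: unroll the recursion using the step-size bound.} Solving the linear recursion gives $e_{T+1} \le \frac{\widetilde{L}_2 c_0 e^{-\beta}}{\widetilde{L}_1}\big((1+\eta\widetilde{L}_1)^T - 1\big)$. Since $\eta \le \frac{1}{\widetilde{L}_1 T}$, we have $(1+\eta\widetilde{L}_1)^T \le e^{\eta\widetilde{L}_1 T} \le e$, so $(1+\eta\widetilde{L}_1)^T - 1 \le e - 1 < 3$, which gives $e_{T+1} \le 3c_0 \frac{\widetilde{L}_2}{\widetilde{L}_1} e^{-\beta}$ — already a bound without the $\sqrt{T}$ factor, so I should double-check whether the $\sqrt{T}$ in Eq.~\eqref{eq:lipschitz_bound} comes from a cruder summation (e.g.\ bounding $\sum_{t=1}^T e^{-\beta}$ directly as $T e^{-\beta}$ before realizing the geometric factor, or from a variance-style argument where per-step errors add in quadrature). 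In any case, the factor $\sqrt{T} e^{-\beta}$ is a valid (if loose) bound. Finally, by the reverse triangle inequality $\big|\EE\|\Delta_{T+1}\| - \EE\|\widetilde{\Delta}_{T+1}\|\big| = \big|\EE\|x_{T+1}-x^*\| - \EE\|\widetilde{x}_{T+1}-x^*\|\big| \le \EE\|x_{T+1}-\widetilde{x}_{T+1}\| = e_{T+1}$, which gives Eq.~\eqref{eq:lipschitz_bound}. The corollary Eq.~\eqref{corollary:emulate_iid} then follows by substituting $\beta = \log T^{3/2}$, since $\sqrt{T}\, e^{-\log T^{3/2}} = \sqrt{T} \cdot T^{-3/2} = 1/T$.

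\textbf{Main obstacle.} The delicate point is Step~1's coupling construction: I need the conditional distribution of $\xi_{t\beta\tau_M}$ given $\mathcal{F}_{(t-1)\beta\tau_M}$ to be provably $\mathcal{W}_1$-close to $\pi$, uniformly and with a geometric rate in $\beta$. Assumption~\ref{assmp:wasserstein_mixing} is stated as a contraction between two Dirac-initialized chains after $\tau_M$ steps; to get closeness to $\pi$ after $\beta\tau_M$ steps I must (i) iterate the contraction $\beta$ times — which requires that $\mathcal{W}_1(\delta_x P^{m\tau_M}, \delta_y P^{m\tau_M}) \le c_0^m \|x-y\|$, itself needing the semigroup/data-processing property of $\mathcal{W}_1$ under the Markov kernel — and (ii) integrate out the second coordinate against $\pi$ (which is $P^{\tau_M}$-invariant), so that $\mathcal{W}_1(\delta_x P^{\beta\tau_M}, \pi) \le c_0^\beta \cdot \EE_{y\sim\pi}\|x-y\| \le c_0^\beta \cdot D$. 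Keeping track of where the domain diameter $D$ versus the constant $c_0$ enters, and making sure the constants collapse to the clean $3c_0 \widetilde{L}_2/\widetilde{L}_1$ in the final bound (and that $c_0^\beta$ can be written as $e^{-\beta}$, i.e.\ an implicit normalization $c_0 \le e^{-1}$ or a redefinition absorbing it), is the part that requires care; everything downstream is a routine linear-recursion estimate.
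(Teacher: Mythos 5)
Your proposal is correct in substance, but it follows a genuinely different route from the paper. The paper's proof never builds a step-by-step coupling of the iterates: it instead shows that the map $h$ taking the whole sample vector $(\xi_{t\beta\tau_M})_{t=1}^T$ to $\|\Delta_{T+1}\|$ is Lipschitz with constant $\sqrt{T}\,\eta\,\widetilde{L}_2(1+\eta\widetilde{L}_1)^{T-1}$ (the $\sqrt{T}$ arises from converting per-coordinate sensitivities into a Euclidean Lipschitz constant), then invokes Kantorovich duality against the \emph{joint} law of the two sample sequences, and bounds $\mathcal{W}_1(Z,\widetilde{Z})\leq c_0 T e^{-\beta}$ by a hybrid argument that swaps one sample at a time, citing Lemma~4 of \citet{mou2024optimal} for the single-swap bound. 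Your approach instead propagates an optimal conditional coupling through the SA recursion, yielding $e_{T+1}\leq \frac{c_0\widetilde{L}_2}{\widetilde{L}_1}e^{-\beta}\bigl((1+\eta\widetilde{L}_1)^T-1\bigr)\leq (e-1)\frac{c_0\widetilde{L}_2}{\widetilde{L}_1}e^{-\beta}$, which is in fact \emph{sharper} than Ineq.~\eqref{eq:lipschitz_bound} (no $\sqrt{T}$ factor) and, as you note, implies it since $\sqrt{T}\geq 1$ and $e-1<3$; the corollary then follows identically. What the paper's route buys is modularity: the Lipschitz-functional/duality step and the process-level Wasserstein lemma are generic and reusable, and the delicate ``how close is a skipped Markov sample to stationarity'' question is outsourced entirely to the cited external lemma. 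What your route buys is a tighter constant and a more self-contained recursion, at the cost of having to construct and justify the conditional coupling yourself. The obstacle you flag --- upgrading the two-point contraction in Assumption~\ref{assmp:wasserstein_mixing} to geometric $\mathcal{W}_1$-closeness to $\pi$ after $\beta\tau_M$ steps, which requires the contraction factor to be strictly less than one (effectively $c_0\leq e^{-1}$, or a renormalized statement of the assumption) plus a first-moment/diameter bound on $\pi$ --- is real, but it is exactly the same ingredient the paper needs and obtains by citing \citet{mou2024optimal}; it is not a defect specific to your argument, and your sketch of iterating the kernel contraction and integrating the second argument against the invariant $\pi$ is the standard way to supply it.
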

See Section~\ref{sec:pf_thm_emulate_iid} for a proof of this theorem.

Qualitatively, Theorem \ref{thm:emulate_iid} demonstrates that when the replay gap $\tau_k$ is sufficiently large (specifically, when $\beta$ is large), the error of our MER algorithm on a Markovian data buffer closely resembles the error of standard SA on an i.i.d. sample sequence. This finding directly confirms the core benefit sought by any experience replay strategy---it is able to emulate i.i.d. behavior even with dependent data. As a consequence, as long as the SA algorithm on an i.i.d. sequence achieves a faster convergence rate than the worst-case Markovian bound, the MER algorithm will replicate this faster rate provided the epoch number $k$ is not too large. 

\section{Consequences for specific examples}
\label{sec:examples}
Having established our general results, we now specialize them to the examples of GLMs and policy evaluation discussed in Section~\ref{subsec:examples_intro}. In these examples, it is common to assume that the Markov chain mixes in total variation (TV) distance. The TV distance between a pair $(\mu',\mu)$ of probability distributions on $\Xi$ is defined as follows:
\begin{align}\label{def:TV_distance}
    \TV(\mu,\mu') = \frac{1}{2} \int_{\xi \in \Xi} |d\mu(\xi) - d\mu'(\xi)|.
\end{align}
For an ergodic Markov chain with unique stationary distribution $\pi$, we denote the conditional distribution of the random variable $\xi_{t'} \in \Xi$ at time step $t'$ conditioned on $\mathcal{F}_{t-1}$ as $P_{t-1}^{t'}$. Recall that $\mathcal{F}_t$ denotes the $\sigma$-algebra generated by the random variables $\{\xi_k\}_{k=1}^{t}$. A commonly used TV mixing assumption is the following:
\begin{assumption}
    \label{assmp:TV_mixing}
    For $t'\geq t$, there exists positive constants $C_{TV}$ and $\rho_{TV} \in (0,1)$ such that we have 
    \begin{align*}
        \TV(\pi,P_{t-1}^{t'}) \leq C_{TV} \cdot \rho_{TV} ^{t'-t} \quad \text{w.p. } 1.
    \end{align*}
\end{assumption}
\subsection{Generalized linear models}
Revisiting the GLM example from Section \ref{sec:GLM}, our objective is to estimate the underlying signal $x^*$ using the given Markovian buffer $\xi^B$. Theorem~\ref{thm:main_error} yields the following corollary for the GLM model. 

 \begin{corollary}\label{corollary:GLM_1}
    Suppose Assumption~\ref{assmp:optimal_ball} holds. In addition assume that the Markov chain generating observations $(a_t, y_t)_{t\geq 1}$ is geometrically mixing in the total variation metric so that it satisfies Assumption~\ref{assmp:TV_mixing}. Choosing step-sizes prescribed by Theorem \ref{thm:main_error}, 
    we have
    \begin{align*}
        \EE\|\finaliterate - x^*\|^2 \leq M \left(1+\frac{3 \mu_f^2 \kappa^2}{\left( \alpha_k+1 \right)8(\zeta^2 + 64 L_f^2 D_a^4)} \right)^{-T_k}\left(D^2+1  \right)  + \frac{18(2+p_k\log T_k)}{\mu_f^2 \kappa^2 T_k} \left(\alpha_k +1 \right)  \sigma^2.
    \end{align*}
    Here $\sigma^2 = 6 D_a^2 \sigma_v^2$ and $\zeta^2 = 12 L_f^2 D_a^4$.
\end{corollary}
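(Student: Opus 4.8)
The plan is to instantiate Theorem~\ref{thm:main_error} with the problem-specific constants for the GLM model, after verifying that the TV mixing assumption (Assumption~\ref{assmp:TV_mixing}) implies the two bias assumptions (Assumptions~\ref{assmp:optimal_condition} and~\ref{assmp:mixing_norm_revised}) and that Assumptions~\ref{assmp:lipscitz1} and~\ref{assmp:state_dependent_noise} hold. First I would record the operator constants already computed in Section~\ref{sec:GLM}: $L = L_f D_a^2$ and $\mu = \mu_f \kappa$. For Assumption~\ref{assmp:lipscitz1}, note $\|\widetilde F(x_1,\xi_t) - \widetilde F(x_2,\xi_t)\| = \|a_t f(a_t^\top x_1) - a_t f(a_t^\top x_2)\| \leq L_f \|a_t\|^2 \|x_1 - x_2\| \leq L_f D_a^2 \|x_1 - x_2\|$, so $\widetilde L_1 = L_f D_a^2$, hence $\bar L = L + \widetilde L_1 = 2 L_f D_a^2$ and $16\bar L^2 = 64 L_f^2 D_a^4$, matching the bracket in the linear-rate term of the corollary.

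Next I would verify the variance bound (Assumption~\ref{assmp:state_dependent_noise}). Writing $\widetilde F(x,\xi_t) = a_t f(a_t^\top x) - a_t y_t = a_t(f(a_t^\top x) - f(a_t^\top x^*)) - a_t v_t$, one splits the centered operator into a signal part controlled by Lipschitzness ($\|a_t(f(a_t^\top x) - f(a_t^\top x^*))\| \leq L_f D_a^2 \|x - x^*\|$, contributing to the $\zeta^2\|x-x^*\|^2$ term) and a noise part $a_t v_t$ with $\EE[\|a_t v_t\|^2] \leq D_a^2 \sigma_v^2$; using $(u+w)^2 \leq 2u^2 + 2w^2$ and bounding conditional expectations by their almost-sure envelopes yields $\sigma^2 = 6 D_a^2 \sigma_v^2$ and $\zeta^2 = 12 L_f^2 D_a^4$ (the numerical factors being chosen to fit the $\tfrac12(\sigma^2 + \zeta^2\|x-x^*\|^2)$ normalization in Eq.~\eqref{eq:state_dependent_noise}). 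For the bias assumptions, the standard argument is that TV mixing transfers to the operator bias: since $\widetilde F(x^*,\xi_{t'})$ is bounded (by $D_a \cdot (L_f D_a \cdot D + D_a \sigma_v)$ type envelope — more carefully, $\|a_{t'} f(a_{t'}^\top x^*) - a_{t'} y_{t'}\|$ has a bound depending on $D_a, L_f, D, \sigma_v$) and $F(x^*) = \EE_\pi[\widetilde F(x^*,\xi)]$, the difference $\|F(x^*) - \EE[\widetilde F(x^*,\xi_{t'})\mid\mathcal F_{t-1}]\|$ is bounded by (envelope)$\times 2\TV(\pi, P_{t-1}^{t'}) \leq 2 C_{TV}\cdot(\text{envelope})\cdot\rho_{TV}^{t'-t}$, giving Assumption~\ref{assmp:optimal_condition} with $\rho_1 = \rho_{TV}$ and an explicit $C_M$; similarly for the difference operator $\widetilde F(x,\xi_{t'}) - \widetilde F(y,\xi_{t'}) = a_{t'}(f(a_{t'}^\top x) - f(a_{t'}^\top y))$, which has Lipschitz envelope $L_f D_a^2\|x-y\|$, one gets Assumption~\ref{assmp:mixing_norm_revised} with $\rho_2 = \rho_{TV}$ and $C_B \propto C_{TV} L_f D_a^2$. (This TV-to-operator-bias reduction is presumably recorded as a lemma in Section~\ref{sec:proofs}; I would cite it rather than rederive it.)

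Finally I would substitute $\mu = \mu_f\kappa$ and the constants above into the bound~\eqref{eq:main_bound} of Theorem~\ref{thm:main_error}. The first (linear) term becomes $M(1 + \tfrac{3\mu_f^2\kappa^2}{8(\alpha_k+1)(\zeta^2 + 64 L_f^2 D_a^4)})^{-T_k}(D^2+1)$ directly. In the third (stochastic-error) term, substitute $6\sigma^2 = 36 D_a^2\sigma_v^2 = 6\sigma^2$ with the corollary's $\sigma^2 := 6 D_a^2\sigma_v^2$, so $6\sigma^2$ in~\eqref{eq:main_bound} equals $6 \cdot (6 D_a^2 \sigma_v^2)$... more precisely I would track that the corollary folds $\|F(x^*)\|^2 = 0$ (since $F(x^*)=0$ in the GLM, as noted after Eq.~\eqref{eq:glm_definition}) and absorbs the exponentially small $\cm^2\rho^{2(\tau_k-1)}$ and the entire second term of~\eqref{eq:main_bound} into the stated expression, arriving at $\tfrac{18(2+p_k\log T_k)}{\mu_f^2\kappa^2 T_k}(\alpha_k+1)\sigma^2$; the constant $18 = 3\cdot 6$ comes from the $3 \cdot 6\sigma^2/\mu^2$ structure with the corollary's renamed $\sigma^2$. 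The main obstacle is purely bookkeeping: carefully matching the numerical constants (the factors $6, 12, 18, 64$) so that the generic bound collapses exactly to the clean statement, and making sure the dropped terms (bias at $x^*$, the $\rho^{\tau_k}$ corrections, the $\|F(x^*)\|^2$ term) are genuinely dominated — which they are because $F(x^*) = 0$ exactly and $\rho^{\tau_M}$, $\rho^{\tau_k-1}$ are negligible by the buffer-size lower bound. No new mathematical difficulty arises beyond the TV-to-bias transfer, which is routine.
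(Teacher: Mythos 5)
Your overall plan (verify Assumptions~\ref{assmp:lipscitz1}--\ref{assmp:mixing_norm_revised} for the GLM operators and then plug into Theorem~\ref{thm:main_error}) is the paper's plan, and your constants $\widetilde{L}_1 = L_f D_a^2$, $\bar{L} = 2L_f D_a^2$, $\sigma^2 = 6D_a^2\sigma_v^2$, $\zeta^2 = 12 L_f^2 D_a^4$, and the TV-to-bias transfer for Assumption~\ref{assmp:mixing_norm_revised} with $\cp = 2\widetilde{L}_1 C_{TV}$ all match the paper. However, there is a genuine gap in how you handle Assumption~\ref{assmp:optimal_condition} and the resulting bias terms. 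The paper's proof does \emph{not} get $\cm$ from TV mixing: it observes that for the GLM, $\widetilde{F}(x^*,\xi_{t'}) = a_{t'}\bigl(f(a_{t'}^\top x^*) - y_{t'}\bigr) = -a_{t'} v_{t'}$, and since $v_{t'}$ is zero-mean i.i.d.\ noise independent of the covariate history, $\EE[\widetilde{F}(x^*,\xi_{t'}) \mid \Fspace_{t-1}] = 0 = F(x^*)$, so Assumption~\ref{assmp:optimal_condition} holds with $\cm = 0$ \emph{exactly}. This is precisely what makes the second term of \eqref{eq:main_bound} and the $25\cm^2\rho^{2(\tau_k-1)}$ contribution vanish identically, yielding the two-term bound with constant $18 = 3\cdot 6$.

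Your route instead produces a nonzero $\cm$ via an envelope-times-TV argument (which, incidentally, needs $y_t$ or $v_t$ bounded --- not assumed in this corollary, only in Corollary~\ref{corollary:GLM_2}), and then asserts that the leftover terms are dominated because ``$\rho^{\tau_M}$, $\rho^{\tau_k-1}$ are negligible by the buffer-size lower bound.'' That step fails: by the definition \eqref{eq:surrogate_mixing_time}, $\rho^{\tau_M} = \mu/(18C)$ is a fixed constant, and $\rho^{\tau_k-1} = 1$ in the final epoch where $\tau_k = 1$; the buffer-size condition on $B$ controls neither quantity. With $\cm > 0$ the second term $\tfrac{20\cm\rho^{\tau_M+\tau_k-1}}{\mu}(1+\cdots)$ does not decay in $T_k$ and cannot be absorbed into $\tfrac{18(2+p_k\log T_k)}{\mu_f^2\kappa^2 T_k}(\alpha_k+1)\sigma^2$. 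The corollary as stated therefore requires the structural fact $\cm = 0$ (zero conditional bias at $x^*$ from the i.i.d.\ observation noise), not a mixing-based bound on the bias at $x^*$.
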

We prove Corollary~\ref{corollary:GLM_1} in Section~\ref{sec:pf_corollary_GLM_1}.

 Comparing with prior results, we note that the deterministic error term in the bound converges at a linear rate, which matches the lower bound established in \citet{nagaraj2020least} [Theorem 1]. The dependence on the effective mixing time $\tau_M$ (through $\alpha_k$) is information-theoretically optimal for this term.
In terms of dependence on the dimension of the problem $d$, we observe that the parameter $\sigma^2$ typically scales linearly with $d$. Consequently, the stochastic error for GLMs for the MER algorithm is of the form
$$\widetilde{\order}\left(\frac{d\sigma_d^2}{T_k}\left(\frac{\tau_M}{\tau_k}+1\right)\right),$$
where $\sigma_d^2$ represents the dimension-independent noise level of the problem~\citep[c.f.][Theorem 3]{nagaraj2020least}.
Also, Corollary~\ref{corollary:GLM_1} implies that to find an $\epsilon$-optimal solution for signal estimation in GLMs for the $k$-th epoch (i.e., $\EE\|\finaliterate - x^*\|^2 \leq \epsilon$), the required iteration complexity is: $$\mathcal{O}\left(\max\left(\frac{(\alpha_k+1)\bar{L}^2}{\mu_f^2}\log\frac{(D^2+1)}{\epsilon},\frac{\sigma^2(\alpha_k+1)}{\epsilon \mu_f^2}\log \frac{1}{\epsilon}\right)\right).$$ This iteration complexity bound is nearly optimal, up to a logarithmic factor, in terms of dependence on $\epsilon$ \citep{tianjiao2022}.

We now turn to stating how well the MER algorithm tracks the i.i.d. error for the GLM model. We persist with the previous notation: $\Delta_{T+1}$ denotes the error of running $T$ steps of the MER algorithm on Markovian samples with replay gap $\tau_k = \beta \cdot \tau_M$ for $\beta = \log T^{3/2}$, while $\widetilde{\Delta}_{T+1}$ denotes the error of running $T$ steps of standard SA on i.i.d. samples. 

We additionally assume that $X$ has an $\ell_2$ diameter of at most $D_x$ and we have a bounded observation space, i.e., $|y_t| \leq D_y$ for all $t$. Since $f$ is Lipschitz and the covariates are also almost surely bounded, for some $D_f\geq 0$ we have
\begin{align}\label{eq:f_bound}
    \sup_{\substack{a: \| a \| \leq D_a \\ x \in X}} |f(a^\top x)| \leq D_f.
\end{align}

\begin{corollary}\label{corollary:GLM_2}
    Suppose that assumptions stated above hold. Additionally, suppose that the Markov chain generating observations $(a_t, y_t)_{t \geq 1}$ is mixing in the Wasserstein-$1$ metric so that Assumption~\ref{assmp:wasserstein_mixing} is satisfied. Under
    the parameter choice of Theorem \ref{thm:emulate_iid}, we have
    \begin{align*}
        \left|\EE\|\Delta_{T+1}\| - \EE\|\widetilde{\Delta}_{T+1}\|\right| \leq  \frac{3c_0 \max (L_f D_a D_x + D_f + D_y , D_a)}{TL_f D_a^2}.
    \end{align*}
\end{corollary}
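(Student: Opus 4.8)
Corollary~\ref{corollary:GLM_2} should follow as a direct instantiation of Theorem~\ref{thm:emulate_iid} for the GLM operator $\widetilde{F}(x,\xi_t)=a_tf(a_t^\top x)-a_ty_t$, where $\xi_t=(a_t,y_t)$. Assumption~\ref{assmp:wasserstein_mixing} is granted by hypothesis, so the only work is to verify Assumptions~\ref{assmp:lipscitz1} and~\ref{assmp:lipscitz2} and read off the constants $\widetilde{L}_1,\widetilde{L}_2$. For the first argument, the same computation already carried out for $F$ in Section~\ref{sec:GLM} applies verbatim: for any fixed $\xi=(a,y)$ with $\|a\|\le D_a$,
\[
\|\widetilde{F}(x_1,\xi)-\widetilde{F}(x_2,\xi)\|=\|a\|\,|f(a^\top x_1)-f(a^\top x_2)|\le L_f\|a\|\,|a^\top x_1-a^\top x_2|\le L_f\|a\|^2\|x_1-x_2\|\le L_fD_a^2\|x_1-x_2\|,
\]
so Assumption~\ref{assmp:lipscitz1} holds with $\widetilde{L}_1=L_fD_a^2$.

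The substantive step is Lipschitzness in the second argument, where $\Xi=\mathbb{R}^d\times\mathbb{R}$ is equipped with the product metric $\|\xi_1-\xi_2\|\defn\|a_1-a_2\|+|y_1-y_2|$ (the same metric underlying the Wasserstein mixing hypothesis, any equivalent choice only altering $c_0$ by a universal constant). Writing $\widetilde{F}(x,\xi)=a\bigl(f(a^\top x)-y\bigr)$ and inserting the term $\pm\,a_2\bigl(f(a_1^\top x)-y_1\bigr)$ gives the split
\[
\widetilde{F}(x,\xi_1)-\widetilde{F}(x,\xi_2)=(a_1-a_2)\bigl(f(a_1^\top x)-y_1\bigr)+a_2\Bigl[\bigl(f(a_1^\top x)-f(a_2^\top x)\bigr)+(y_2-y_1)\Bigr].
\]
I would then bound each piece with the standing assumptions: $|f(a_1^\top x)-y_1|\le D_f+D_y$ by~\eqref{eq:f_bound} and $|y_1|\le D_y$; $\|a_2\|\le D_a$; and $|f(a_1^\top x)-f(a_2^\top x)|\le L_f|a_1^\top x-a_2^\top x|\le L_fD_x\|a_1-a_2\|$ using Lipschitzness of $f$, Cauchy--Schwarz, and $\sup_{x\in X}\|x\|\le D_x$ (which follows from the $\ell_2$-diameter bound on $X$). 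Collecting terms,
\[
\|\widetilde{F}(x,\xi_1)-\widetilde{F}(x,\xi_2)\|\le \bigl(L_fD_aD_x+D_f+D_y\bigr)\|a_1-a_2\|+D_a\,|y_1-y_2|\le \max\{L_fD_aD_x+D_f+D_y,\,D_a\}\,\|\xi_1-\xi_2\|,
\]
so Assumption~\ref{assmp:lipscitz2} holds with $\widetilde{L}_2=\max\{L_fD_aD_x+D_f+D_y,\,D_a\}$.

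With Assumptions~\ref{assmp:lipscitz1},~\ref{assmp:wasserstein_mixing},~\ref{assmp:lipscitz2} in force, Theorem~\ref{thm:emulate_iid} applies; under its parameter choice (in particular $\eta\le 1/(\widetilde{L}_1T)$ and $\beta=\log T^{3/2}$), the consequence~\eqref{corollary:emulate_iid} gives $\bigl|\EE\|\Delta_{T+1}\|-\EE\|\widetilde{\Delta}_{T+1}\|\bigr|\le 3c_0\widetilde{L}_2/(T\widetilde{L}_1)$, and substituting $\widetilde{L}_1=L_fD_a^2$ and the value of $\widetilde{L}_2$ above yields exactly $\frac{3c_0\max\{L_fD_aD_x+D_f+D_y,\,D_a\}}{TL_fD_a^2}$, as claimed. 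The main---indeed essentially the only---obstacle is the second-argument Lipschitz estimate: one must choose the add-and-subtract split that isolates the covariate and response perturbations cleanly and, crucially, bundle the two resulting coefficients into a single maximum taken against the $\ell_1$-type product metric $\|a_1-a_2\|+|y_1-y_2|$, so that the constant emerges as $\widetilde{L}_2$ rather than picking up a spurious $\sqrt{2}$; there is also minor bookkeeping in passing from the $\ell_2$-diameter assumption on $X$ to the uniform bound $\sup_{x\in X}\|x\|\le D_x$ used above.
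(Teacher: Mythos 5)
Your proposal is correct and follows essentially the same route as the paper: verify second-argument Lipschitzness of $\widetilde{F}$ via an add-and-subtract decomposition (your grouping differs only cosmetically from the paper's, and both arrive at the identical intermediate bound $(L_f D_a D_x + D_f + D_y)\|a_1-a_2\| + D_a\,|y_1-y_2|$), and then invoke Theorem~\ref{thm:emulate_iid} with $\widetilde{L}_1 = L_f D_a^2$. The only divergence is in converting that bound to $\widetilde{L}_2$: the paper uses the Euclidean product metric and records $\widetilde{L}_2 = \sqrt{2}\max(L_f D_a D_x + D_f + D_y,\, D_a)$, while your $\ell_1$-type product metric drops the $\sqrt{2}$ and in fact reproduces the corollary's stated constant exactly.
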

We prove Corollary~\ref{corollary:GLM_2} in Section~\ref{sec:pf_corollary_GLM_2}. To our knowledge, this is the first result establishing closeness of SA iterates under the Markovian and i.i.d. settings for GLMs.

\subsection{Policy evaluation in RL}
Recall the example of policy evaluation in RL from Section \ref{sec:RL}, where we aim to find the fixed point $\bar{\theta}$ of the projected Bellman Eq.~\eqref{eq:projected_bellman_matrix}. Theorem~\ref{thm:main_error} yields the following corollary for this problem.

\begin{corollary}\label{corollary:RL_1}
    Suppose Assumption~\ref{assmp:optimal_ball} holds and the feature vectors are bounded, i.e., for all $s \in \mathcal{S}, \|\psi(s)\| \leq D_{\psi}$ and the reward function is bounded, i.e., for all $s,s' \in \mathcal{S}$, $R(s,s') \leq \bar{R}$. In addition, we assume that the underlying Markov chain is geometrically mixing in the total variation metric so that Assumption~\ref{assmp:TV_mixing} is satisfied. Under the same premise as that of Theorem \ref{thm:main_error},
    we have
    \begin{align*}
        \EE\|\theta_{T_k +1}^{(k)} - \bar{\theta}\|^2&\leq M \left(1+\frac{3 (1-\gamma)^2\lambda_{min}(Q)^2}{\left( \alpha_k+1 \right)8(\zeta^2 + 16 (1+\gamma)^2 (D_{\psi}^2 + \lambda_{max}(Q))^2)} \right)^{-T_k}\left(D^2+1  \right)\\ 
        &\qquad+ \frac{20  \cm\rho^{\tau_M+\tau_k-1}}{ (1-\gamma)\lambda_{min}(Q)}\left(1+\frac{3 }{128\left( \alpha_k+1 \right)} \right) \\
        &\qquad + \frac{3(2+p_k\log T_k)}{(1-\gamma)^2\lambda_{min}(Q)^2 T_k} \left(\alpha_k +1 \right)  (6\sigma^2+25\cm^2 \rho^{2(\tau_k-1)}+ 4 \|F(\bar{\theta})\|^2).
    \end{align*}
        Here $\sigma^2 = 48(1+\gamma)^2 D_{\psi}^4 D^2+ 48 D_{\psi}^2 \bar{R}^2$ and $\zeta^2 = 12 (1+\gamma)^2 D_{\psi}^4$.
\end{corollary}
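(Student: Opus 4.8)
The plan is to obtain Corollary~\ref{corollary:RL_1} by instantiating Theorem~\ref{thm:main_error} for the projected Bellman VI of Section~\ref{sec:RL}, taking $x^* = \bar{\theta}$ and $(F,\widetilde F)$ as defined there. Concretely, I would (i) verify that Assumptions~\ref{assmp:optimal_ball}--\ref{assmp:mixing_norm_revised} hold under the stated boundedness ($\|\psi(s)\|\le D_\psi$, $R(s,s')\le\bar{R}$) and TV-mixing (Assumption~\ref{assmp:TV_mixing}) hypotheses, pinning down each of the constants $L,\mu,\widetilde{L}_1,\sigma,\zeta,\cm,\cp,\rho$ appearing in~\eqref{eq:main_bound}, and then (ii) substitute these constants into~\eqref{eq:main_bound}. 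The buffer-size condition and step-size schedule are inherited verbatim from Theorem~\ref{thm:main_error} with these constants plugged in, so no extra work is needed there.

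For the operator constants: $F$ is $(1+\gamma)\lambda_{max}(Q)$-Lipschitz and $(1-\gamma)\lambda_{min}(Q)$-strongly monotone (recorded in Section~\ref{sec:RL}), so $\mu=(1-\gamma)\lambda_{min}(Q)$. For Assumption~\ref{assmp:lipscitz1} I would use $\widetilde F(\theta_1,\xi_k)-\widetilde F(\theta_2,\xi_k)=\big(\langle\psi(s_k),\theta_1-\theta_2\rangle-\gamma\langle\psi(s_{k+1}),\theta_1-\theta_2\rangle\big)\psi(s_k)$ and bound each factor by $D_\psi$, giving $\widetilde{L}_1=(1+\gamma)D_\psi^2$ and hence $\bar{L}=L+\widetilde{L}_1=(1+\gamma)\big(D_\psi^2+\lambda_{max}(Q)\big)$, which matches the coefficient $16(1+\gamma)^2(D_\psi^2+\lambda_{max}(Q))^2=16\bar{L}^2$ in the linear term. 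For Assumption~\ref{assmp:state_dependent_noise}, I would split $\widetilde F(\theta,\xi)=\widetilde F(\bar{\theta},\xi)+\big(\widetilde F(\theta,\xi)-\widetilde F(\bar{\theta},\xi)\big)$: the conditional second moment of the centered first summand is at most $\EE[\|\widetilde F(\bar{\theta},\xi_{t'})\|^2\mid\Fspace_{t-1}]$, which is deterministically bounded using $|\langle\psi(s),\bar{\theta}\rangle|\le D_\psi\|\bar{\theta}\|\le D_\psi D/2$ (Assumption~\ref{assmp:optimal_ball}) and $|R|\le\bar{R}$, contributing the $\sigma^2$-term; the centered second summand has conditional second moment at most $\widetilde{L}_1^2\|\theta-\bar{\theta}\|^2$, contributing the $\zeta^2$-term. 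With generous numeric constants this yields $\sigma^2=48(1+\gamma)^2D_\psi^4D^2+48D_\psi^2\bar{R}^2$ and $\zeta^2=12(1+\gamma)^2D_\psi^4$.

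For the two bias assumptions, the key observation is that, conditioned on $\Fspace_{t-1}$, the state $s_{t'}$ has law $P_{t-1}^{t'}$ while $s_{t'+1}$ is then one further step drawn independently from $P(s_{t'},\cdot)$; writing $g_\theta(s):=\EE[\widetilde F(\theta,\xi_{t'})\mid s_{t'}=s]$, we have $F(\theta)=\sum_s\pi(s)g_\theta(s)$ and $\EE[\widetilde F(\theta,\xi_{t'})\mid\Fspace_{t-1}]=\sum_s P_{t-1}^{t'}(s)g_\theta(s)$, so the bias equals $\sum_s\big(P_{t-1}^{t'}(s)-\pi(s)\big)g_\theta(s)$, bounded by $2\,\TV(\pi,P_{t-1}^{t'})\,\sup_s\|g_\theta(s)\|$. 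At $\theta=\bar{\theta}$ we have $F(\bar{\theta})=0$ and $\sup_s\|g_{\bar{\theta}}(s)\|\le\big((1+\gamma)D_\psi D/2+\bar{R}\big)D_\psi$, so Assumption~\ref{assmp:TV_mixing} gives Assumption~\ref{assmp:optimal_condition} with $\rho_1=\rho_{TV}$ and an explicit $\cm$; since $\theta\mapsto g_\theta(s)-g_{\theta'}(s)$ is linear with operator norm at most $(1+\gamma)D_\psi^2$, the same argument gives Assumption~\ref{assmp:mixing_norm_revised} with $\rho_2=\rho_{TV}$ and an explicit $\cp$. Setting $\rho=\rho_{TV}$, $C=\cm/40+\cp$ as in~\eqref{eq:C_defn}, and computing $\tau_M$, $\alpha_k$, $M$, $p_k$ from~\eqref{eq:surrogate_mixing_time}, \eqref{eq:alpha_definition} and~\eqref{eq:definitions}, substitution into~\eqref{eq:main_bound} produces the three displayed terms.

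I expect the variance computation to be the most error-prone step in matching the exact numeric constants (the $48$'s and $12$'s), but the genuinely delicate point is the bias verification: one must carefully separate the roles of $s_{t'}$ (whose conditional law differs from $\pi$, producing all of the TV decay) and $s_{t'+1}$ (averaged independently under $P(s_{t'},\cdot)$, so its presence introduces no extra bias), since this is exactly what reduces both Assumption~\ref{assmp:optimal_condition} and Assumption~\ref{assmp:mixing_norm_revised} to the single TV estimate of Assumption~\ref{assmp:TV_mixing}. Everything downstream of identifying the constants is pure substitution into Theorem~\ref{thm:main_error}.
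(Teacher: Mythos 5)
Your proposal is correct and follows the same overall route as the paper: fix $\mu=(1-\gamma)\lambda_{min}(Q)$, show $\widetilde{L}_1=(1+\gamma)D_\psi^2$ so that $\bar{L}=(1+\gamma)(D_\psi^2+\lambda_{max}(Q))$, verify the variance bound with $\sigma^2=48(1+\gamma)^2D_\psi^4D^2+48D_\psi^2\bar{R}^2$ and $\zeta^2=12(1+\gamma)^2D_\psi^4$ (your two-term split around $\widetilde F(\bar\theta,\xi)$ actually yields slightly smaller constants than the paper's three-term Young decomposition, which is harmless since the assumption only needs an upper bound), and then substitute into Theorem~\ref{thm:main_error}. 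The one place where you genuinely diverge is Assumptions~\ref{assmp:optimal_condition} and~\ref{assmp:mixing_norm_revised}: the paper simply cites Lemmas~1 and~2 of \citet{li2023accelerated}, whereas you verify them directly from Assumption~\ref{assmp:TV_mixing} by writing the bias as an integral against $P_{t-1}^{t'}-\pi$ and bounding it by $2\,\TV(\pi,P_{t-1}^{t'})$ times a sup-norm of the conditional-mean operator, with the next state $s_{t'+1}$ averaged out under $P(s_{t'},\cdot)$. This is exactly the argument the paper itself uses for the GLM case in Section~\ref{sec:pf_corollary_GLM_1}, so your version is self-contained where the paper's is by citation; both deliver $\rho=\rho_{TV}$ and explicit $(\cm,\cp)$, and everything downstream is identical substitution.
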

We prove Corollary~\ref{corollary:RL_1} in Section~\ref{sec:pf_corollary_RL_1}.

Ignoring the second term as before, the sample complexity required to find an $\epsilon$-optimal solution for policy evaluation in RL, such that $\EE\left[\|\theta_{T_k+1}^{(k)}-\bar{\theta}\|^2\right]\leq \epsilon$ , during the $k$-th epoch is
$$\mathcal{O}\left(\max\left(\frac{\alpha_k+1}{(1-\gamma)^2}\log\frac{D^2+1}{\epsilon},\frac{\bar{\sigma}^2(\alpha_k+1)}{(1-\gamma)^2\epsilon}\log\frac{1}{(1-\gamma)^2\epsilon}\right)\right),$$
where $\bar{\sigma}^2=6\sigma^2+25\cm^2\rho^{2(\tau_k-1)}+4\|F(\bar{\theta})\|^2$. 
The first term in the complexity bound corresponds to the deterministic error, which exhibits a quadratic dependence on the discount factor $\frac{1}{(1-\gamma)^2}$, which is suboptimal. One can further apply the acceleration technique in \citet{li2023accelerated} to achieve the optimal dependence on the discount factor. The main quantity of interest in the complexity bound is the last term, i.e., the stochastic error, which is nearly optimal, up to a logarithmic factor, in terms of its dependence on $\epsilon$. The dependence on the discount factor can also be further improved by applying variance-reduction techniques; see, e.g., \citet{khamaru2021temporal, li2023accelerated}. The key takeaway, however, is that the given rate is achieved without knowledge of the mixing time.

The next corollary shows that the MER algorithm tracks the i.i.d. error in its initial epochs. 

\begin{corollary}\label{corollary:RL_2}
    Suppose that the underlying Markov chain is mixing in the Wasserstein-$1$ metric so that Assumption~\ref{assmp:wasserstein_mixing} is satisfied. Also suppose that the feature vectors are bounded, i.e., for all $s \in \mathcal{S}, \|\psi(s)\| \leq D_{\psi}$ and the reward function is bounded, i.e., for all $s,s' \in \mathcal{S}$ $R(s,s') \leq \bar{R}$, and consider the same parameter choice of Theorem \ref{thm:emulate_iid} and $\beta = \log T^{3/2}$.
    Then the following holds
    \begin{align}
        \nonumber
        \left|\EE\|\Delta_{T+1}\| - \EE\|\widetilde{\Delta}_{T+1}\|\right| \leq  \frac{3c_0 \left((2+\gamma)\cdot D_{\psi} \cdot D_{\theta} +\bar{R}\right)}{T(1+\gamma)D_{\psi}^2}.
    \end{align}
\end{corollary}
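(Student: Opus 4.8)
The plan is to obtain the bound as a direct specialization of Theorem~\ref{thm:emulate_iid} to the policy-evaluation stochastic operator
\[
\widetilde{F}(\theta,\xi_k) = \bigl( \langle \psi(s_k),\theta \rangle - R(s_k,s_{k+1}) - \gamma \langle \psi(s_{k+1}),\theta \rangle \bigr)\psi(s_k).
\]
Since Assumption~\ref{assmp:wasserstein_mixing} is imposed directly in the statement, the only work is to verify Assumptions~\ref{assmp:lipscitz1} and~\ref{assmp:lipscitz2} for this operator and to read off the associated constants $\widetilde{L}_1$ and $\widetilde{L}_2$; the bound~\eqref{corollary:emulate_iid}, which is the $\beta = \log T^{3/2}$ instance of Theorem~\ref{thm:emulate_iid}, then gives $\bigl|\EE\|\Delta_{T+1}\| - \EE\|\widetilde{\Delta}_{T+1}\|\bigr| \leq 3c_0\widetilde{L}_2/(T\widetilde{L}_1)$, into which we substitute (the step-size restriction $\eta \leq 1/(\widetilde{L}_1 T)$ being part of ``the same parameter choice'' of Theorem~\ref{thm:emulate_iid}).

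For Assumption~\ref{assmp:lipscitz1}, I would use the identity $\widetilde{F}(\theta_1,\xi_k) - \widetilde{F}(\theta_2,\xi_k) = \langle \psi(s_k) - \gamma\psi(s_{k+1}),\,\theta_1 - \theta_2\rangle\,\psi(s_k)$; Cauchy--Schwarz together with the feature bound $\|\psi(s)\| \leq D_{\psi}$ then yields $\widetilde{L}_1 = (1+\gamma)D_{\psi}^2$, which is precisely the denominator in the claimed bound.

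For Assumption~\ref{assmp:lipscitz2}, fix $\theta \in X$ and write $\xi = (u,u',w)$ with $u = \psi(s)$, $u' = \psi(s')$, $w = R(s,s')$, so that $\widetilde{F}(\theta,\xi) = g(\theta,\xi)\,u$ with scalar $g(\theta,\xi) \defn \langle u,\theta\rangle - w - \gamma\langle u',\theta\rangle$. Using $\|u\|,\|u'\| \leq D_{\psi}$, $|w| \leq \bar{R}$, and the uniform bound $\|\theta\| \leq D_{\theta}$ over $X$, one gets $|g(\theta,\xi)| \leq (1+\gamma)D_{\psi}D_{\theta} + \bar{R}$. The product-rule-style split $g(\theta,\xi_1)u_1 - g(\theta,\xi_2)u_2 = g(\theta,\xi_1)(u_1 - u_2) + \bigl(g(\theta,\xi_1) - g(\theta,\xi_2)\bigr)u_2$, combined with $|g(\theta,\xi_1) - g(\theta,\xi_2)| \leq D_{\theta}\|u_1-u_2\| + |w_1-w_2| + \gamma D_{\theta}\|u_1'-u_2'\|$ and $\|u_2\| \leq D_{\psi}$, gives
\[
\|\widetilde{F}(\theta,\xi_1) - \widetilde{F}(\theta,\xi_2)\| \leq \bigl((2+\gamma)D_{\psi}D_{\theta} + \bar{R}\bigr)\|u_1-u_2\| + \gamma D_{\psi}D_{\theta}\|u_1'-u_2'\| + D_{\psi}|w_1-w_2|.
\]
Measuring $\xi_1 - \xi_2$ by the $\ell_1$ product of the coordinate metrics (as in Corollary~\ref{corollary:GLM_2}), the first coefficient dominates the other two, so $\widetilde{L}_2 = (2+\gamma)D_{\psi}D_{\theta} + \bar{R}$. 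Substituting $\widetilde{L}_1 = (1+\gamma)D_{\psi}^2$ and this $\widetilde{L}_2$ into~\eqref{corollary:emulate_iid} produces exactly $\frac{3c_0\bigl((2+\gamma)D_{\psi}D_{\theta}+\bar{R}\bigr)}{T(1+\gamma)D_{\psi}^2}$.

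I expect the only mildly delicate step to be the verification of Assumption~\ref{assmp:lipscitz2}: the dependence of $\widetilde{F}$ on $\xi$ is bilinear in the (feature, parameter) pair rather than linear in $\xi$, so one needs the product-rule split together with a careful accounting of the three noise coordinates, and it is exactly here that the uniform bound $D_{\theta}$ on $\|\theta\|$ over the feasible set must be invoked (the analogue of the role played by $D_x$ in the GLM case). One should also take care to use the same metric on $\Xi$ in Assumption~\ref{assmp:lipscitz2} as in the Wasserstein mixing condition of Assumption~\ref{assmp:wasserstein_mixing}; with that reconciled, no further estimates are required beyond plugging into Theorem~\ref{thm:emulate_iid}.
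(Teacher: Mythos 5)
Your overall strategy matches the paper's: verify Assumptions~\ref{assmp:lipscitz1} and~\ref{assmp:lipscitz2} for the TD operator, read off $\widetilde{L}_1 = (1+\gamma)D_\psi^2$ and $\widetilde{L}_2 \propto (2+\gamma)D_\psi D_\theta + \bar{R}$, and plug into the $\beta = \log T^{3/2}$ bound~\eqref{corollary:emulate_iid}. Your product-rule split for the second-argument Lipschitzness is essentially the paper's three-term decomposition into $E_1, E_2, E_3$.

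However, there is a genuine gap in how you obtain the constant $D_\theta$. You invoke ``the uniform bound $\|\theta\| \leq D_{\theta}$ over $X$'' as if it were an assumption on the feasible set, by analogy with $D_x$ in the GLM case. But the policy evaluation problem in this paper is \emph{unconstrained}: the corollary's hypotheses contain no bounded-feasible-set assumption, the update reduces to the unprojected iteration, and $\widetilde{F}(\theta,\xi)$ is not Lipschitz in $\xi$ uniformly over $\theta \in \mathbb{R}^d$ (its Lipschitz constant in the second argument scales with $\|\theta\|$). In the paper, $D_\theta$ is not an assumed set diameter but a \emph{derived} bound on the algorithm's iterates, $D_\theta = (1+\sqrt{d})\bigl(\|\theta_0\| + \bar{R}/((1+\gamma)D_\psi)\bigr)$, established in Lemma~\ref{lemma:theta_bound}. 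That lemma is proved by writing $\Delta_t = \theta_t - \theta_0$, exploiting the non-expansiveness (in $\ell_\infty$) of $I - \eta\psi(s_t)\psi(s_t)^\top + \eta\gamma\psi(s_t)\psi(s_t)^\top$, summing the resulting per-step increments of size $\eta D_\psi\bar{R} + \eta(1+\gamma)D_\psi^2\|\theta_0\|_\infty$ over $T$ steps, and then using the step-size restriction $\eta \leq 1/(T\widetilde{L}_1)$ to cancel the factor of $T$. This iterate-boundedness argument is the piece your proposal is missing; since Theorem~\ref{thm:emulate_iid} only needs Lipschitzness in the second argument along the algorithm's trajectories, proving this bound on the iterates is exactly what makes the verification of Assumption~\ref{assmp:lipscitz2} legitimate here, and it is also what pins down the specific value of $D_\theta$ appearing in the corollary's bound. (A minor further point: the paper records $\widetilde{L}_2 = \sqrt{3}\bigl((2+\gamma)D_\psi D_\theta + \bar{R}\bigr)$ from combining three coordinates of the noise into a single norm; your $\ell_1$ product-metric convention absorbs that factor, which is a harmless difference of metric normalization on $\Xi$, but it should be stated consistently with Assumption~\ref{assmp:wasserstein_mixing}, as you note.)
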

We prove Corollary~\ref{corollary:RL_2} in Section~\ref{sec:pf_corollary_RL_2}. As in the case of GLMs, we believe this is the first such result in the literature.

\section{Numerical results}\label{sec:numerical_results}

In this section, we report numerical experiments for the MER algorithm, comparing its performance to standard serial SA and SA with sample-skipping. We consider both the logistic regression problem and the policy evaluation problem. We also include results for the SA algorithm on i.i.d.\ data to assess how well each method can emulate the i.i.d.\ setting.

\subsection{Policy evaluation with Markovian noise}

We first consider the policy evaluation problem, where we compare the MER algorithm against TD and CTD, which are the analogs of SA for this setting without and with skipping, respectively.
 We consider Markov reward processes with a parameter $m$ that controls the mixing time of the underlying Markov chain, while the discount factor is fixed at $\gamma=0.8$ for all runs. Given a state space $\mathcal{S}$, the transition kernel $P$ is defined as
\begin{align*}
    P(s'|s) = \left \{
    \begin{array}{ll}
    \frac{2m-1}{m}, & s' = s\\
    \frac{1-m}{(|\mathcal{S}|-1)m}, & s' \neq s
    \end{array}
    \right..
\end{align*}
The reward function is $r(s) = \ind{s \geq d}$, where $d$ is the dimension of the feature vector. The stationary distribution for this chain is uniform over all states, and the spectral gap of $P$ is $\frac{|\mathcal{S}|(1-m)}{(|\mathcal{S}|-1)m}$. The effective mixing time, $\tau_M$, is therefore of order $\frac{m}{1-m}$: as $m$ approaches $1/2$, the chain mixes rapidly and approaches the i.i.d.\ regime, while larger $m$ values yield slower mixing and more pronounced Markovian dependence. When $m=1$, the chain is fully sticky, remaining in the same state with probability $1$.

Feature vectors are defined as
\[
    \phi(s) = \mathbf{e}_{s \wedge d} \in \mathbb{R}^d,
\]
where $\mathbf{e}_i$ is the $i$-th standard basis vector. We fix $|\mathcal{S}|=30$ and $d=16$. For each value of $m$, we plot the normalized squared error $\frac{\|\overline{\theta}_{T_k} - \theta^*\|^2}{\|\theta_0 - \theta^*\|^2}$, where $\overline{\theta}_{T_k}$ is the averaged output, $\theta^*$ is the optimal solution, and $\theta_0$ is the initialization.

\begin{figure}[htbp]
    \centering
    \subfigure[]{\label{fig:1}\includegraphics[width=8cm]{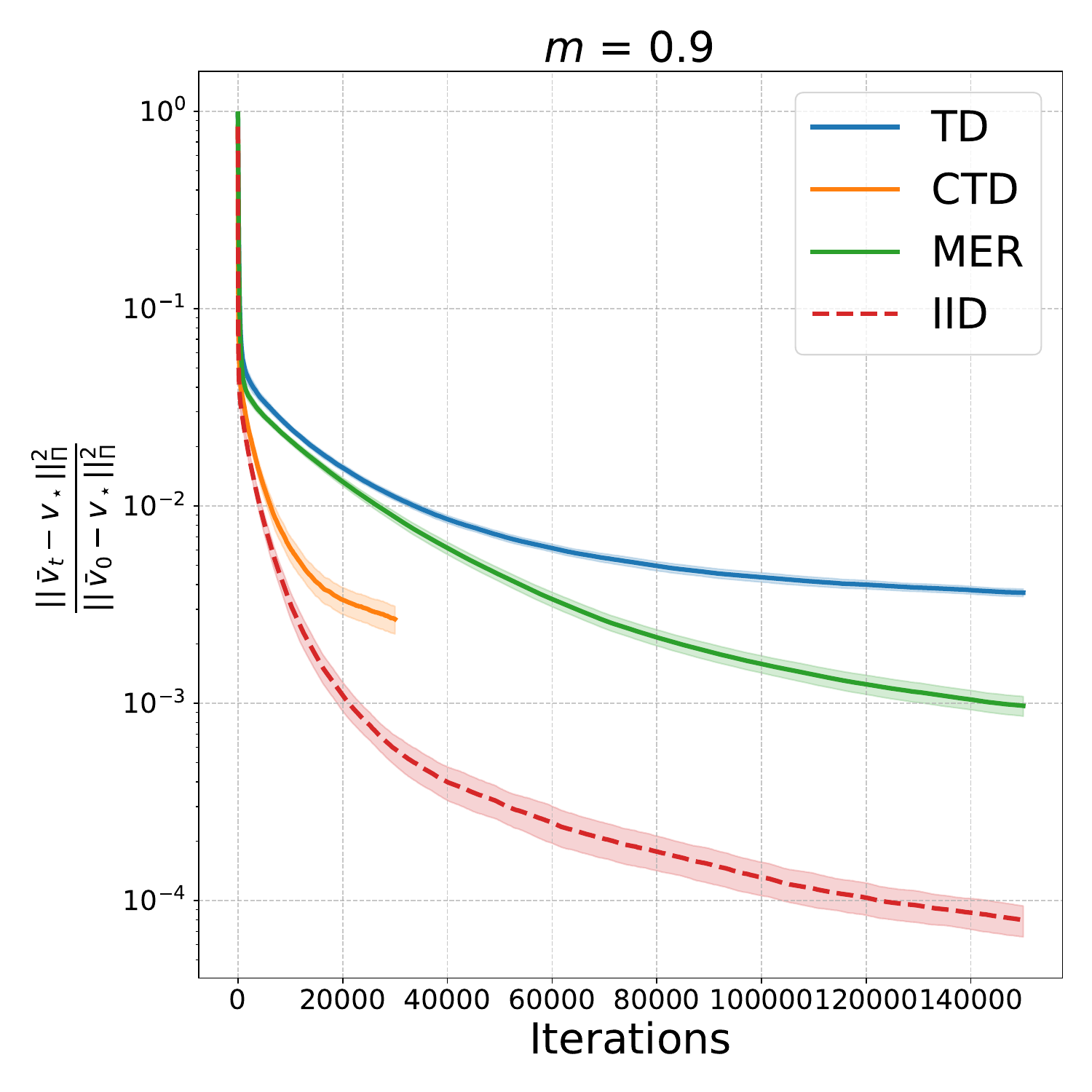}}
    \subfigure[]{\label{fig:2}\includegraphics[width=8cm]{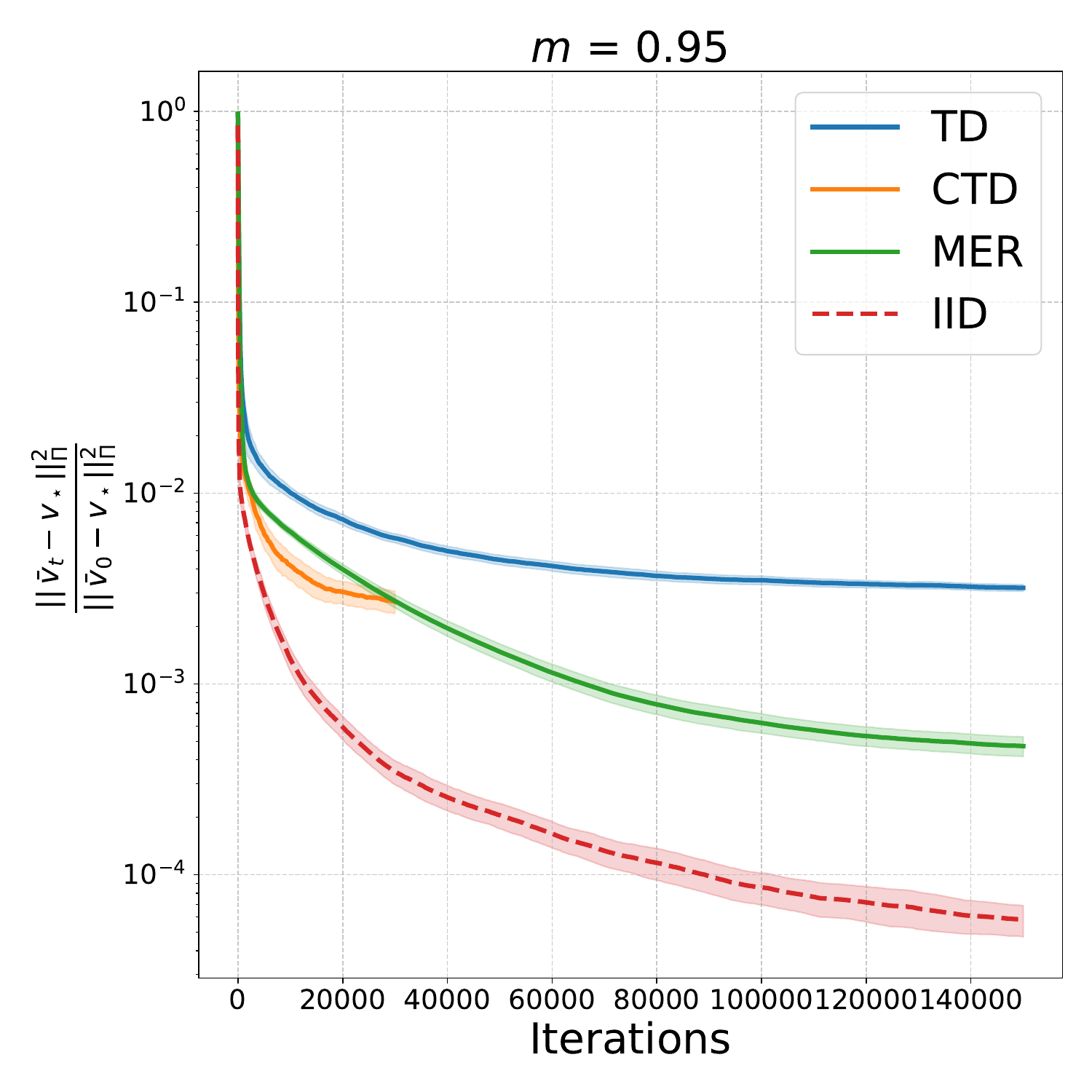}}
    \caption{Comparison of the algorithms for the MDP example. On the $y$-axis we plot the ratios in the Euclidean norm $\|\cdot\|_{\Pi}$ averaged over $50$ instances.}
\end{figure}

To generate the i.i.d.\ curves, we sample independently from the stationary distribution at each time step and apply the TD algorithm with this sample. For the MER, TD, and CTD algorithms, results are obtained using a static Markovian buffer of size $150000$. The skipping parameter for CTD is set to $5$ in all experiments. With our buffer size, the CTD algorithm thus utilizes $30000$ samples in each case due to skipping.

For $m = 0.9$ (Fig.~\ref{fig:1}), the chain mixes rapidly. Owing to skipping, the CTD algorithm closely emulates an i.i.d.\ data stream over the $30000$ iterations over which it is run, but the error cannot be driven further down because we have exhausted all available samples. The TD algorithm achieves comparable error eventually, but the decay of its error over iterations is noticeably worse. The MER algorithm achieves significantly better iterate-by-iterate performance than TD, while plateauing at a smaller eventual error than CTD and TD (and closer to the i.i.d. error). 
When $m$ increases to $0.95$ (Fig.~\ref{fig:2}), the chain mixes more slowly and Markovian dependence becomes significant. Here, the fixed skipping parameter of $5$ for CTD is much smaller than the effective mixing time, which leads to a marked drop in CTD performance. The MER algorithm still performs similarly to the case when $m = 0.9$, showing that its performance is not as sensitive to the mixing parameter. The TD algorithm remains the least effective in this regime.

In summary, CTD's performance is highly sensitive to the choice of skipping parameter, while MER consistently approximates i.i.d.\ performance regardless of the mixing time. As expected, TD performs the worst for both values of $m$.

\subsection{Logistic regression with autoregressive covariates}

We now test the algorithms on a logistic regression task.  The link function for the GLM is taken to be the sigmoid:
\[
    f: z \mapsto \frac{\exp(z)}{1 + \exp(z)}.
\]
The covariates $(a_t)_{t \geq 1}$ are generated using an autoregressive process 
\begin{align}\label{eq:autoregressive}
    a_{t+1} = A a_{t} + \epsilon_{t}, \text{ for }t=1, 2\ldots,
\end{align}
where $\epsilon_t \in \mathbb{R}^n$ is a sequence of i.i.d. Gaussian random variables with mean $0$ and variance $10^{-2}$. The first covariate $a_1$ is obtained by initializing the process at $0$ and using a burn-in period of $10000$ steps.

The matrix $A$ is generated to be a stable, symmetric matrix. 
The degree of Markovian dependence in the covariates is controlled using the number of eigenvalues of $A$ that are close to $1$, which we denote as $d_{\textsc{L}}$. Larger $d_{\textsc{L}}$ values correspond to slower mixing.
Specifically, given $d_{\textsc{L}}$, we generate $A$ via the following steps:
\begin{enumerate}
    \item Eigenvalue generation: We generate a vector of eigenvalues where the first $d_{\textsc{L}}$ entries are set equal to $0.995$, and the remaining eigenvalues are random values chosen from a Gaussian distribution with mean $0$ and variance $10^{-4}$. This ensures that the remaining $d - d_{\textsc{L}}$ eigenvalues are close to $0$. All eigenvalues are collected in a $d \times d$ diagonal matrix $E$.
    \item Matrix Construction:  We now generate uniformly random $d \times d$ orthonormal matrix $Q$, and $A$ is obtained via the eigenvalue decomposition
    \begin{align*}
        A = QEQ^T.
    \end{align*}
\end{enumerate}

After generating the matrix $A$ the covariates $(a_t)_{t \geq 1}$ are generated according to the autoregressive process described in Eq.~\eqref{eq:autoregressive} and the corresponding observations $(y_t)_{t \geq 1}$ are generated according to the GLM in Eq.~\eqref{eq:GLM_model} using the sigmoid link function. 

We fix the covariate dimension at $d=1000$ and the number of samples as $n=10000$ and report results for $d_{\textsc{L}} = 400$ and $500$. The skipping parameter of the CTD algorithm is set as $10$ across all experiments. Thus, for a fixed buffer size of $10000$ samples the CTD algorithm utilizes only $1000$ samples due to skipping. For each value of $d_{\textsc{L}}$, we plot the normalized squared error $\frac{\|x_{T_k} - x_*\|^2}{\|x_0 - x_*\|^2}$, where $x_{T_k}$ is the output, $x_*$ is the optimal solution, and $x_0$ is the initialization.

\begin{figure}[htbp]
    \centering
    \subfigure[]{\label{fig:3}\includegraphics[width=8cm]{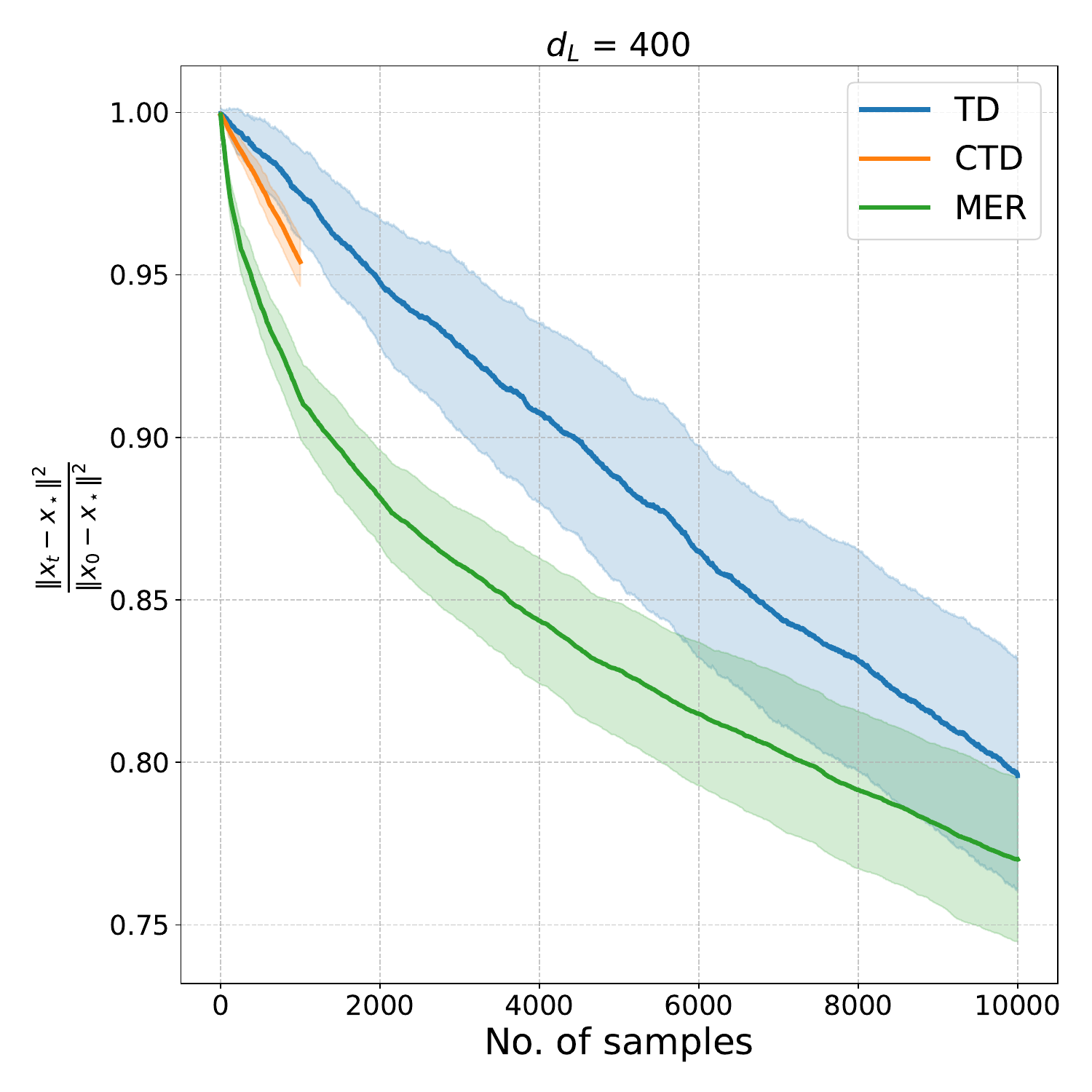}}
    \subfigure[]{\label{fig:4}\includegraphics[width=8cm]{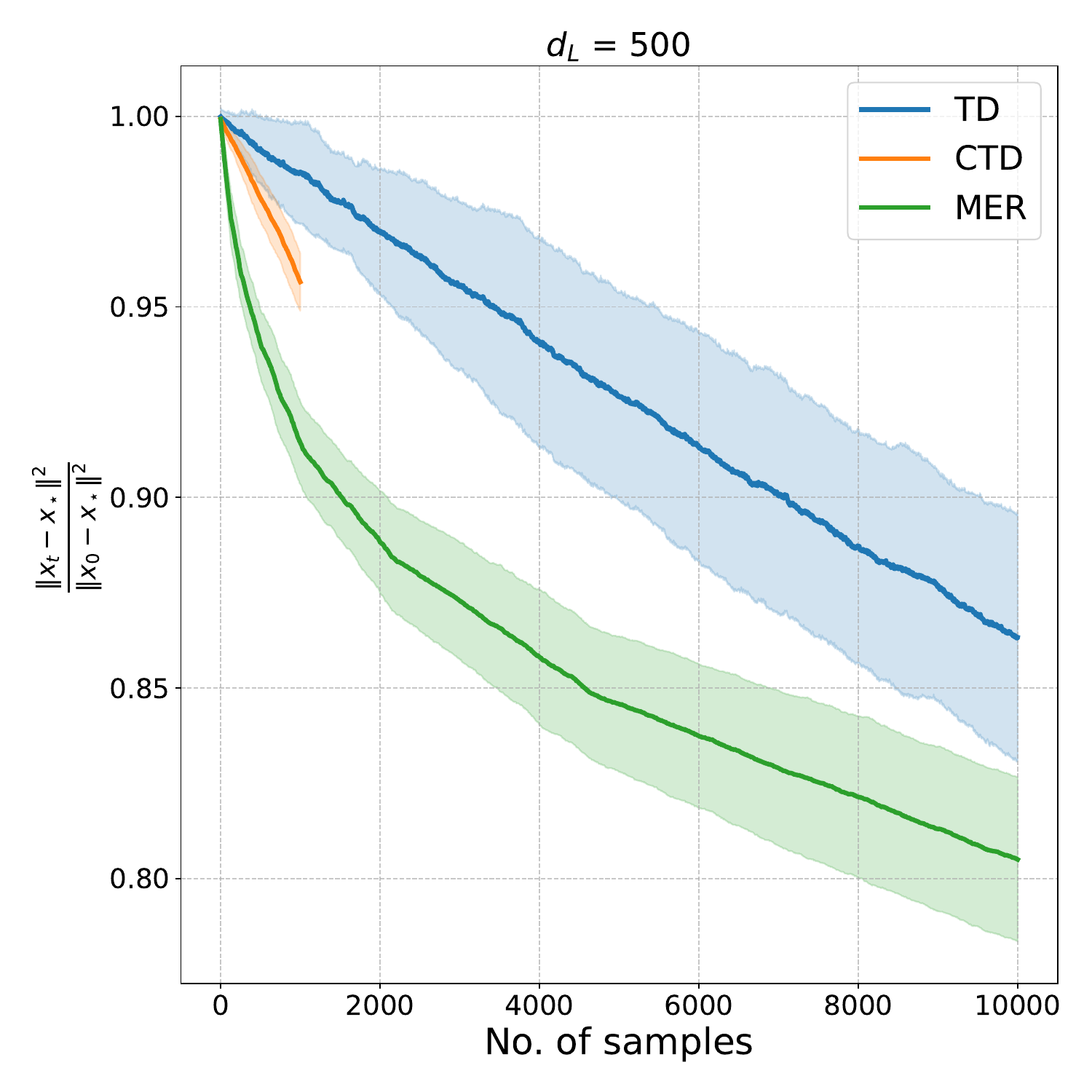}}
    \caption{Comparison of the algorithms for the logistic regression example. On the $y$-axis we plot the ratios in the Euclidean norm $\|\cdot\|_{2}$ averaged over $50$ instances.}
\end{figure}

Figures~\ref{fig:3} and~\ref{fig:4} show that the MER algorithm consistently achieves performance better than TD and CTD, even when the chain mixes slowly. As before, MER achieves this performance without relying on any mixing information.

\section{Proofs}
\label{sec:proofs}

In this section, we present proofs of all our main results. 

We note that while the MER algorithm utilizes a dynamic buffer where samples are deleted and appended (as seen in Algorithm~\ref{alg:two}), the theoretical convergence analysis for a static buffer case extends directly to this setting. The key justification lies in the \emph{epoch-wise re-initialization} of the iterate $x_1^{(k)}$. At the start of each epoch $k$, the variable is initialized independently of its previous trajectory. This mechanism ensures that for the duration of the $k$-th epoch, the algorithm effectively operates on a fixed snapshot of the memory buffer. In the dynamic buffer, the samples used are even further apart, thus the performance guarantees derived under the static buffer assumption hold for each epoch individually. Consequently, the convergence guarantees established herein for the static case remain globally valid for the dynamic buffer implementation of Algorithm~\ref{alg:two}.

\subsection{Preliminary supporting lemmas}

We begin by  stating and proving some lemmas that are useful in multiple parts of the analysis.
\begin{lemma}
    \label{lemma:mixing_norm}
    Suppose Assumptions \ref{assmp:optimal_condition} and \ref{assmp:mixing_norm_revised} hold. 
    For every $t,t' \in \mathbb{Z}_+$ and $x \in X$, with probability $1$,
    \begin{align}\label{eq:mixing_norm}
    \|F(x)-\EE[\widetilde{F}(x,\xi_{t+t'})|\Fspace_{t-1}]\| \leq \cm \cdot \rho^{t'} +\cp \cdot \rho^{t'}\|x-x^{*}\|.
\end{align}
Consequently, with probability 1,
\begin{align}\label{eq:mixing_inner_prod}
    \left\langle F(x)-\EE[\widetilde{F}(x,\xi_{t+t'})|\Fspace_{t-1}], x-x^*  \right\rangle\leq \cm \cdot \rho^{t'}\|x-x^*\| + \cp \cdot \rho^{t'}\|x-x^*\|^2.
\end{align}
\end{lemma}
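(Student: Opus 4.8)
\textbf{Proof plan for Lemma~\ref{lemma:mixing_norm}.}
The plan is to derive Ineq.~\eqref{eq:mixing_norm} by reducing the single-point bias bound to the two assumptions already in hand, and then obtain Ineq.~\eqref{eq:mixing_inner_prod} by a routine Cauchy--Schwarz step. First I would write the quantity we want to control as a telescoping sum between the evaluation at $x$ and the evaluation at the optimal point $x^*$:
\begin{align*}
F(x)-\EE[\widetilde{F}(x,\xi_{t+t'})|\Fspace_{t-1}]
&= \Bigl(F(x)-\EE[\widetilde{F}(x,\xi_{t+t'})|\Fspace_{t-1}] - F(x^*) + \EE[\widetilde{F}(x^*,\xi_{t+t'})|\Fspace_{t-1}]\Bigr)\\
&\quad + \Bigl(F(x^*) - \EE[\widetilde{F}(x^*,\xi_{t+t'})|\Fspace_{t-1}]\Bigr).
\end{align*}
Applying the triangle inequality to this decomposition, the first bracket is bounded by Assumption~\ref{assmp:mixing_norm_revised} (with the roles $t'\mapsto t+t'$ and the gap $t+t'-t = t'$, and the points $x,y\mapsto x,x^*$), giving $\cp\cdot\rho_2^{t'}\|x-x^*\|$, and the second bracket is bounded by Assumption~\ref{assmp:optimal_condition} similarly, giving $\cm\cdot\rho_1^{t'}$. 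Upper bounding $\rho_1^{t'},\rho_2^{t'}\le \rho^{t'}$ via the definition $\rho=\max\{\rho_1,\rho_2\}$ yields Ineq.~\eqref{eq:mixing_norm}. A minor point to check is that the assumptions are stated for \emph{pairs of natural numbers} $t'\ge t$; here we instantiate them with the pair $(t, t+t')$, which satisfies $t+t'\ge t$ for any $t'\in\mathbb{Z}_+$, so the hypotheses apply verbatim. I would also remark that when $t'=0$ both assumptions give the trivial bound (since $\rho^0=1$), so nothing breaks at the boundary, though the interesting regime is $t'$ large.

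For the second claim Ineq.~\eqref{eq:mixing_inner_prod}, I would simply apply the Cauchy--Schwarz inequality to the inner product $\langle F(x)-\EE[\widetilde{F}(x,\xi_{t+t'})|\Fspace_{t-1}], x-x^*\rangle$, bounding it by $\|F(x)-\EE[\widetilde{F}(x,\xi_{t+t'})|\Fspace_{t-1}]\|\cdot\|x-x^*\|$, and then substitute the bound from Ineq.~\eqref{eq:mixing_norm} on the first factor. Distributing $\|x-x^*\|$ across the two terms $\cm\rho^{t'}+\cp\rho^{t'}\|x-x^*\|$ produces exactly $\cm\rho^{t'}\|x-x^*\| + \cp\rho^{t'}\|x-x^*\|^2$, as desired. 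Both steps hold with probability $1$ because the defining inequalities in Assumptions~\ref{assmp:optimal_condition} and~\ref{assmp:mixing_norm_revised} do.

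I do not anticipate a genuine obstacle here --- the lemma is essentially a bookkeeping consequence of the two mixing assumptions. The only thing requiring a little care is getting the exponents right: the gap in both assumptions is measured as $t'-t$ where the operator is evaluated at the later time, so one must instantiate with the correct ordered pair to land on the exponent $t'$ rather than something off by a shift. Once that is set up correctly, the triangle inequality and Cauchy--Schwarz finish the argument.
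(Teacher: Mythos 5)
Your proposal is correct and follows essentially the same route as the paper: both decompose the bias at $x$ into the difference-of-biases term (handled by Assumption~\ref{assmp:mixing_norm_revised} with $y = x^*$) plus the bias at $x^*$ (handled by Assumption~\ref{assmp:optimal_condition}), with your direct triangle inequality being trivially equivalent to the paper's reverse-triangle-inequality-plus-rearrangement. The Cauchy--Schwarz step for Ineq.~\eqref{eq:mixing_inner_prod} matches what the paper leaves implicit.
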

\begin{proof}
    By setting $y= x^*$ in Ineq.~\eqref{eq:mixing_norm_revised}, we obtain
    \begin{align*}
        \cp \cdot \rho^{t'} \|x - x^*\| &\geq \|F(x)-\EE[\widetilde{F}(x,\xi_{t+t'})|\Fspace_{t-1}]-F(x^*)+\EE[\widetilde{F}(x^*,\xi_{t+t'})|\Fspace_{t-1}]\| \\
        &\geq \|F(x)-\EE[\widetilde{F}(x,\xi_{t+t'})|\Fspace_{t-1}]\|-\|F(x^*)-\EE[\widetilde{F}(x^*,\xi_{t+t'})|\Fspace_{t-1}]\|.
    \end{align*}
    Substituting Ineq.~\eqref{eq:optimal_condition} into the inequality above and rearranging the terms, we have
    \begin{align*}
        \|F(x)-\EE[\widetilde{F}(x,\xi_{t+t'})|\Fspace_{t-1}]\| \leq \cm\rho^{t'}+\cp \rho^{t'} \|x - x^*\|,
    \end{align*}
    which completes the proof of Lemma \ref{lemma:mixing_norm}.
\end{proof}

\begin{lemma}\label{bias_variance_lemma}
    Suppose Assumptions~\ref{assmp:lipscitz1}--\ref{assmp:optimal_condition} hold. For every $t, t' \in \mathbb{Z}_+$ and $x \in X$, we have
    \begin{align*}
        \EE\left[\|\widetilde{F}(x,\xi_{t+t'})-  F(x)\|^2 \right] \leq \sigma^2 +4\cm^2\rho^{2t'}+ (\zeta^2 + 8\bar{L}^2)\EE\left[\|x-x^*\|^2\right],
    \end{align*}
    where $\bar{L} \defn L + \widetilde{L}_1$.
\end{lemma}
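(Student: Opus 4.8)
\textbf{Proof plan for Lemma~\ref{bias_variance_lemma}.}
The plan is to decompose the error $\widetilde{F}(x,\xi_{t+t'}) - F(x)$ into a ``variance'' part and a ``bias'' part by inserting the conditional expectation $\EE[\widetilde{F}(x,\xi_{t+t'}) \mid \Fspace_{t-1}]$, then bound each part using the assumptions. Concretely, I would write
\[
\widetilde{F}(x,\xi_{t+t'}) - F(x) = \underbrace{\left(\widetilde{F}(x,\xi_{t+t'}) - \EE[\widetilde{F}(x,\xi_{t+t'}) \mid \Fspace_{t-1}]\right)}_{\text{variance term}} + \underbrace{\left(\EE[\widetilde{F}(x,\xi_{t+t'}) \mid \Fspace_{t-1}] - F(x)\right)}_{\text{bias term}},
\]
and apply the elementary inequality $\|a+b\|^2 \leq 2\|a\|^2 + 2\|b\|^2$ after taking expectations. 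Here one must be slightly careful that $x$ may itself be random in applications, but the statement quantifies over a fixed $x \in X$; I would treat $x$ as deterministic so that the conditional-variance bound in Assumption~\ref{assmp:state_dependent_noise} applies directly with $x$ fixed, and then take total expectation at the end (the $\EE[\,\cdot\,]$ on the left is over all randomness). For the variance term, taking $\EE[\,\cdot \mid \Fspace_{t-1}]$ and invoking Assumption~\ref{assmp:state_dependent_noise} with $t' \mapsto t+t'$ gives the bound $\tfrac{\sigma^2}{2} + \tfrac{\zeta^2}{2}\|x-x^*\|^2$; taking total expectation preserves this.

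For the bias term, I would invoke Lemma~\ref{lemma:mixing_norm}, which gives $\|\EE[\widetilde{F}(x,\xi_{t+t'})\mid\Fspace_{t-1}] - F(x)\| \leq \cm\rho^{t'} + \cp\rho^{t'}\|x-x^*\|$ with probability $1$. Squaring and using $(a+b)^2 \leq 2a^2 + 2b^2$ yields a bound of $2\cm^2\rho^{2t'} + 2\cp^2\rho^{2t'}\|x-x^*\|^2$. At this point the bound from the two pieces, after the outer factor of $2$ from the initial split, reads
\[
\EE\|\widetilde{F}(x,\xi_{t+t'}) - F(x)\|^2 \leq \sigma^2 + \zeta^2\EE\|x-x^*\|^2 + 4\cm^2\rho^{2t'} + 4\cp^2\rho^{2t'}\EE\|x-x^*\|^2.
\]
It remains to absorb the $4\cp^2\rho^{2t'}\|x-x^*\|^2$ term into the $(\zeta^2 + 8\bar L^2)\|x-x^*\|^2$ target. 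This is where I expect the only real (if minor) obstacle: one needs $4\cp^2\rho^{2t'} \leq 8\bar L^2$, i.e., $\cp^2\rho^{2t'} \leq 2\bar L^2$. Since $\rho < 1$, it suffices to argue $\cp \leq \sqrt{2}\,\bar L$, or more precisely $2\cp^2 \leq (\widetilde L_1 + L)^2$. I would justify this by noting that $\cp$ as it appears in Assumption~\ref{assmp:mixing_norm_revised} controls $\|F(x)-F(y) - (\EE[\widetilde F(x,\xi)\mid\Fspace] - \EE[\widetilde F(y,\xi)\mid\Fspace])\|/\|x-y\|$, which by the triangle inequality together with the $L$-Lipschitzness of $F$ (Eq.~\eqref{eq:F_lipschitz}) and the $\widetilde L_1$-Lipschitzness of $\widetilde F$ in its first argument (Assumption~\ref{assmp:lipscitz1}, which passes through conditional expectation by Jensen) is at most $L + \widetilde L_1 = \bar L$; hence one may always take $\cp \leq \bar L$, giving $4\cp^2\rho^{2t'} \leq 4\bar L^2 \leq 8\bar L^2$. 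Combining, $4\cm^2\rho^{2t'} + (\zeta^2 + 4\cp^2\rho^{2t'})\EE\|x-x^*\|^2 \leq 4\cm^2\rho^{2t'} + (\zeta^2 + 8\bar L^2)\EE\|x-x^*\|^2$, which is the claimed bound.

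In summary, the proof is: (i) split into variance plus bias and apply $\|a+b\|^2\leq 2\|a\|^2+2\|b\|^2$; (ii) bound the variance term by Assumption~\ref{assmp:state_dependent_noise}; (iii) bound the bias term by Lemma~\ref{lemma:mixing_norm} followed by another application of $(a+b)^2\leq 2a^2+2b^2$; (iv) use $\cp \leq \bar L$ and $\rho<1$ to fold the residual $\|x-x^*\|^2$ coefficient into $8\bar L^2$; (v) take total expectation throughout. The bookkeeping of the numerical constants (tracking exactly where the factors of $2$, $4$, and $8$ arise) is the part most prone to error, but there is no conceptual difficulty.
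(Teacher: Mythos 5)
Your overall skeleton (split into variance plus conditional bias, bound the variance via Assumption~\ref{assmp:state_dependent_noise}, square with $\|a+b\|^2\le 2\|a\|^2+2\|b\|^2$) matches the paper, but your treatment of the bias term has a genuine gap. You bound the bias via Lemma~\ref{lemma:mixing_norm}, which introduces $\cp$ and requires Assumption~\ref{assmp:mixing_norm_revised} --- an assumption that is not among the hypotheses of the lemma (only Assumptions~\ref{assmp:lipscitz1}--\ref{assmp:optimal_condition} are assumed). To then absorb the resulting $4\cp^2\rho^{2t'}\|x-x^*\|^2$ into $8\bar L^2\|x-x^*\|^2$ you assert that ``one may always take $\cp\le\bar L$.'' This does not follow: Assumption~\ref{assmp:mixing_norm_revised} posits a constant $\cp$ \emph{paired with a geometric decay} $\rho_2^{t'-t}$, and for a valid pair $(\cp,\rho_2)$ the constant $\cp$ can be much larger than $\bar L$ (e.g., if the conditional bias difference stays near $\bar L\|x-y\|$ for many lags before decaying at rate $\rho_2$, then any admissible $\cp$ must exceed $\bar L\rho_2^{-k}$ for those lags). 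Your triangle-inequality observation only shows the \emph{undecayed} bound $\bar L\|x-y\|$; it does not let you shrink the $\cp$ appearing in the assumption, nor does it justify $\cp\rho^{t'}\le\sqrt{2}\,\bar L$ for the particular constants the paper carries through its analysis.

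The fix is essentially the observation you made but applied in the right place: bypass $\cp$ entirely by recentering the bias at $x^*$. The paper writes
\begin{align*}
\|\EE[\widetilde F(x,\xi_{t+t'})\mid\Fspace_{t-1}]-F(x)\|^2
&\le 2\big\|\big(\EE[\widetilde F(x,\xi_{t+t'})\mid\Fspace_{t-1}]-\EE[\widetilde F(x^*,\xi_{t+t'})\mid\Fspace_{t-1}]\big)-\big(F(x)-F(x^*)\big)\big\|^2\\
&\qquad+2\,\|\EE[\widetilde F(x^*,\xi_{t+t'})\mid\Fspace_{t-1}]-F(x^*)\|^2,
\end{align*}
bounds the second term by $2\cm^2\rho^{2t'}$ using Assumption~\ref{assmp:optimal_condition}, and the first by $4L^2\|x-x^*\|^2+4\widetilde L_1^2\|x-x^*\|^2$ using the $L$-Lipschitzness of $F$ and Assumption~\ref{assmp:lipscitz1} passed through the conditional expectation by Jensen; since $L^2+\widetilde L_1^2\le\bar L^2$, the outer factor of $2$ then yields exactly $4\cm^2\rho^{2t'}+8\bar L^2\|x-x^*\|^2$, with no appeal to $\cp$ or Assumption~\ref{assmp:mixing_norm_revised}. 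As written, your argument proves a variant of the lemma under an extra hypothesis and an unjustified relation between $\cp$ and $\bar L$, so the absorption step needs to be replaced by this recentering argument.
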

\begin{proof}First, by Young's inequality, we have
\begin{align}
\label{eq:intermediate_bias_variance}
    \|\widetilde{F}(x,\xi_{t+t'})-  F(x)\|^2 &{\leq} 2 \|\widetilde{F}(x,\xi_{t+t'})-\EE[\widetilde{F}(x,\xi_{t+t'})|\Fspace_{t-1}]\|^2+ 2 \|\EE[\widetilde{F}(x,\xi_{t+t'})|\Fspace_{t-1}]-  F(x)\|^2.
\end{align}
To bound the second term note, that
\begin{align*}
     \|\EE[\widetilde{F}(x,\xi_{t+t'})|\Fspace_{t-1}]-  F(x)\|^2 & \overset{\1}{\leq} 2\|\EE[\widetilde{F}(x,\xi_{t+t'})|\Fspace_{t-1}]-  F(x) -\EE[\widetilde{F}(x^*,\xi_{t+t'})|\Fspace_{t-1}] + F(x^*) \|^2 \\ \nonumber
     & \quad + 2 \|\EE[\widetilde{F}(x^*,\xi_{t+t'})|\Fspace_{t-1}] - F(x^*)\|^2 \\
     &\overset{\2}{\leq} 4\|F(x^{*})-  F(x)\|^2+ 4\|\EE[\widetilde{F}(x,\xi_{t+t'})|\Fspace_{t-1}]-\EE[\widetilde{F}(x^{*},\xi_{t+t'})|\Fspace_{t-1}]\|^2 \\
     & \quad + 2 \cm^2 \rho^{2t'} \\ 
     &\overset{\3}{\leq} 4 L^2 \|x-x^{*}\|^2 + 4 \EE[\|\widetilde{F}(x,\xi_{t+t'})-\widetilde{F}(x^{*},\xi_{t+t'})\|^2 |\Fspace_{t-1}] + 2 \cm^2 \rho^{2t'}\\
     &\overset{\4}{\leq} 4 L^2 \|x-x^{*}\|^2 + 4 \EE[\widetilde{L}_1^2 \|x-x^{*}\|^2 |\Fspace_{t-1}]+ 2 \cm^2 \rho^{2t'},
\end{align*}
where step $\1$ follows from the Young's inequality, step $\2$ follows from Assumption~\ref{assmp:optimal_condition} and Jensen's inequality, step $\3$ follows from Ineq.~\eqref{eq:F_lipschitz}, and step $\4$ follows from Assumption~\ref{assmp:lipscitz1}.
Taking expectation and using Assumption \ref{assmp:state_dependent_noise} in Ineq.~\eqref{eq:intermediate_bias_variance} we have
\begin{align}
    \EE\left[\|\widetilde{F}(x,\xi_{t+t'})-  F(x)\|^2 \right] \leq \sigma^2 +4\cm^2\rho^{2t'}+ (\zeta^2 + 8\bar{L}^2)\EE\left[\|x-x^*\|^2\right],
\end{align}
where we have further used $L^2 +\widetilde{L}_1^2 \leq (L+\widetilde{L}_1)^2 = \bar{L}^2$. This completes the proof of Lemma~\ref{bias_variance_lemma}.
\end{proof}
\subsection{Proof of Theorem \ref{thm:main_error}}\label{subsec:proof_theorem_1}

We start by defining the shorthand
\begin{align}
    \label{eq:theta_defn}
    \Gamma_t \defn (1+\mu \eta_k)^{t}.
\end{align}
The error decomposition in the following proposition is key to the proof. 
\begin{proposition}\label{recursion_proposition}
 Suppose Assumptions \ref{assmp:optimal_ball}-\ref{assmp:mixing_norm_revised} hold, then for the $k$-th epoch of Algorithm \ref{alg:two} we have
    \begin{align}
    \nonumber
         \EE\|\finaliterate - x^*\|^2&\leq \underbrace{\frac{G_1}{A_{T_k}} \|\initialiterate - x^*\|^2}_{R_1}+ \underbrace{\frac{10 \eta_k \cm}{A_{T_k}}\left(\sum_{t=1}^{\tau_M/\tau_k}\rho^{t\tau_k-1}\Gamma_t + \sum_{t=\tau_M/\tau_k + 1}^{T_k}\rho^{\tau_M+\tau_k-1}\Gamma_t\right)}_{R_2} \\ \nonumber 
    &\qquad+ \underbrace{\frac{H}{A_{T_k}} \eta_k^2\left(6\sigma^2+25\cm^2 \rho^{2(\tau_k-1)}+ 4 \|F(x^*)\|^2\right)}_{R_3},
    \end{align}
   where 
\begin{align}
\nonumber
    A_{T_k} &= \left(\frac{1}{2} -\eta_k^2 \bar{L}^2 + \eta_k \mu \right)\Gamma_{T_k}, \\ \nonumber
    G_1 &= \left(\frac{1}{2} + 2\eta_k^2E \right)(1+\eta_k \mu)+\eta_kC \sum\limits_{t'=1}^{\alpha_k+1}\rho^{\tau_k t'-1}(1+\eta_k \mu)^{t'} +2\eta_k^2 \bar{L}^2\sum\limits_{t'=1}^{\alpha_k+1} (1+\eta_k \mu)^{t'}\\
    &\quad~  +2\eta_k^2 \bar{L}^2\sum\limits_{t'=1}^{\alpha_k} (t'-1)(1+\eta_k \mu)^{t'}, \nonumber\\ \nonumber
    H &= \sum_{t=1}^{\alpha_k}t\Gamma_t+\sum_{t=\alpha_k+1}^{T_k}(\alpha_k+1)\Gamma_t ,\; E = \zeta^2 + 8\bar{L}^2,
\end{align}
and $\alpha_k$ is defined in Eq. \eqref{eq:alpha_definition}. 
\end{proposition}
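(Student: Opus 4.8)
The plan is to derive the single-epoch recursion by treating epoch $k$ in isolation (justified by the independent re-initialization) and unrolling a one-step descent inequality for the proximal SA update~\eqref{eq:MER_update}. First I would record the standard prox-inequality: since $\iterate^{(k)}_{t+1}$ minimizes $\eta_k \langle \widetilde F(\iterate,\xi_{t\tau_k}),x\rangle + \tfrac12\|x-\iterate\|^2$ over the convex set $X$, optimality gives, for all $x\in X$ (in particular $x=x^*$),
\begin{align*}
\|\updatediterate - x^*\|^2 \leq \|\iterate - x^*\|^2 - \|\updatediterate - \iterate\|^2 - 2\eta_k \langle \widetilde F(\iterate,\xi_{t\tau_k}), \updatediterate - x^*\rangle.
\end{align*}
I would then split $\updatediterate - x^* = (\updatediterate - \iterate) + (\iterate - x^*)$ in the inner product, use Young's inequality on the cross term involving $\updatediterate-\iterate$ to absorb part of $-\|\updatediterate-\iterate\|^2$, and decompose the remaining term $\langle \widetilde F(\iterate,\xi_{t\tau_k}),\iterate-x^*\rangle$ into four pieces: the population monotonicity term $\langle F(\iterate),\iterate-x^*\rangle \geq \mu\|\iterate-x^*\|^2$ (Assumption~\eqref{eq:monotonicity}), the zero-mean noise term $\langle \widetilde F(\iterate,\xi_{t\tau_k}) - \EE[\widetilde F(\iterate,\xi_{t\tau_k})\mid \mathcal F],\iterate-x^*\rangle$, the conditional-bias-at-$x^*$ term controlled by Assumption~\ref{assmp:optimal_condition}, and the conditional-bias-of-difference term controlled by Lemma~\ref{lemma:mixing_norm} (which yields the $\cm\rho^{t'}\|\iterate-x^*\| + \cp\rho^{t'}\|\iterate-x^*\|^2$ bound). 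This produces a stochastic recursion of the schematic form $\EE\|\updatediterate-x^*\|^2 \leq (1 - 2\eta_k\mu + O(\eta_k^2\bar L^2) + O(\eta_k C\rho^{\cdot}))\EE\|\iterate-x^*\|^2 + \eta_k^2(\text{noise/bias floor})$.

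The central difficulty — and the reason for the unusual form of $G_1$ and the split sums in $R_2$ and $H$ — is that the ``noise'' in epoch $k$ is \emph{not} conditionally mean-zero: the samples $\xi_{t\tau_k}$ are spaced only $\tau_k$ apart, and when $\tau_k \leq \tau_M$ (i.e. $\alpha_k\geq1$) successive iterates are genuinely correlated with the increments. The standard trick here is a \emph{lookback/shift} argument: instead of conditioning $\widetilde F(\iterate,\xi_{t\tau_k})$ on $\mathcal F_{t\tau_k - 1}$ (which is too close), condition on $\mathcal F_{(t-\alpha_k)\tau_k - 1}$ roughly $\tau_M$ steps in the past, so that the bias terms carry a factor $\rho^{\tau_M} = O(\mu/C)$ that can be absorbed into the monotonicity gain. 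The cost is that one must then replace $\iterate$ inside the conditional expectation by $x_{t-\alpha_k}^{(k)}$ and control the drift $\|\iterate - x_{t-\alpha_k}^{(k)}\|$ across those $\alpha_k$ intermediate steps; using Assumption~\ref{assmp:lipscitz1} (the $\widetilde L_1$-Lipschitzness of $\widetilde F$) and the step-size bound, each such step contributes an $O(\eta_k\bar L)$ perturbation, and telescoping over $\alpha_k$ steps yields the $\sum_{t'=1}^{\alpha_k}(t'-1)(1+\eta_k\mu)^{t'}$ term in $G_1$ and the $\alpha_k+1$ inflation in $H$. I expect this drift-control bookkeeping — carefully tracking how a perturbation at step $t-\alpha_k$ propagates through the (contractive-in-expectation but stochastically noisy) updates to step $t$, while keeping the resulting geometric sums in closed enough form to match the stated $G_1$, $H$ — to be the main obstacle.

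Once the one-step inequality $\EE\|\updatediterate-x^*\|^2 \leq c_t\,\EE\|\iterate-x^*\|^2 + d_t$ is in hand with explicit $c_t$, I would multiply through by the integrating factor $\Gamma_t = (1+\mu\eta_k)^t$ and telescope from $t=1$ to $t=T_k$, which is exactly why $\Gamma_t$ is defined in Eq.~\eqref{eq:theta_defn} and why $A_{T_k}$ appears as the common denominator: $A_{T_k}$ is (a lower bound on) the accumulated contraction factor $\Gamma_{T_k}(\tfrac12 - \eta_k^2\bar L^2 + \eta_k\mu)$, $G_1/A_{T_k}$ collects the coefficient of the initialization error $\|\initialiterate-x^*\|^2$, $R_2$ collects the weighted sum of the linear-in-$\|\iterate-x^*\|$ bias terms $\cm\rho^{t\tau_k-1}$ (split at $t=\tau_M/\tau_k$ according to whether the lookback window fits, giving the two sums $\sum_{t=1}^{\tau_M/\tau_k}\rho^{t\tau_k-1}\Gamma_t$ and $\sum_{t>\tau_M/\tau_k}\rho^{\tau_M+\tau_k-1}\Gamma_t$), and $R_3$ collects the weighted sum $H$ of the constant noise/bias floors $6\sigma^2 + 25\cm^2\rho^{2(\tau_k-1)} + 4\|F(x^*)\|^2$, where the numerical constants ($6$, $25$, $4$, and the $\sigma^2/2, \zeta^2/2$ split from Assumption~\ref{assmp:state_dependent_noise} together with Lemma~\ref{bias_variance_lemma}) come out of the repeated applications of Young's inequality. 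Throughout, I would use $\|a\|\|b\| \leq \tfrac{\epsilon}{2}\|a\|^2 + \tfrac{1}{2\epsilon}\|b\|^2$ with $\epsilon$ tuned so that every $\|\iterate-x^*\|^2$-type term generated from a bias or noise cross-term is dominated by the $\eta_k\mu$ margin in $c_t$; the buffer lower bound and step-size schedule in Theorem~\ref{thm:main_error} are precisely what make this absorption valid in the final error bound, but Proposition~\ref{recursion_proposition} itself should hold for the stated $\eta_k$ without needing the buffer bound, so I would keep the constants symbolic here. The remaining simplifications — bounding the geometric sums, comparing $\sum t\Gamma_t$ against $T_k\Gamma_{T_k}/(\mu\eta_k)^2$ and so on — I would defer to the subsequent derivation of Theorem~\ref{thm:main_error} from this proposition rather than doing them inside the proof of the proposition.
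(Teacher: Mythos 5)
Your proposal follows essentially the same route as the paper's proof: the prox three-point inequality, a lookback decomposition that conditions roughly $\tau_M$ (i.e.\ $\alpha_k$ iterations) in the past and replaces $\iterate$ by a past iterate $\pastiterate$ while controlling the accumulated drift through the step-size and the Lipschitz bounds (this is precisely the paper's Claim~\ref{claim:inner_prod} together with the contraction bound $\|\updatediterate-\iterate\|\le 2\eta_k\|\widetilde F(\iterate,\xi_{t\tau_k})\|$), then Lemma~\ref{lemma:mixing_norm} and Lemma~\ref{bias_variance_lemma} for the bias and variance, and finally the integrating factor $\Gamma_t$ with the case split $\kappa=t-1$ versus $\kappa=\tau_M/\tau_k$ that produces the two sums in $R_2$ and the $(\alpha_k+1)$ inflation in $H$. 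The one caveat is that the telescoping into the stated $G_1/A_{T_k}$ form is not automatic: it needs the coefficient comparison $A_t\ge G_{t+1}$, which the paper verifies using the step-size bound of Claim~\ref{claim:eta} and the definition of $\tau_M$ (so that $C\rho^{\tau_M}\le \mu/18$), so the proposition does implicitly rely on the step-size schedule and buffer condition, contrary to your remark that it should hold for the stated $\eta_k$ without that machinery.
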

In the definition for $G_1$ above, recall that $C = \left(\frac{\cm}{40}+\cp\right)$ and $\bar{L} = L+ \widetilde{L}_1$ from Eq.~\eqref{eq:definitions}.
Proposition~\ref{recursion_proposition} is proved in Section~\ref{sec:pf_prop1}. 
Paralleling the theorem statement, the first term $R_1$ is the deterministic error, the term $R_2$ is due to the bias at the optimal solution $x^{*}$ and the term $R_3$ is the stochastic error. Note that $R_2=0$ if $\cm=0$.

For the time being, we assume the statement of Proposition \ref{recursion_proposition} as given and proceed with our proof for Theorem \ref{thm:main_error}.    
Next, we state a simple condition and a claim which is used throughout the proof of Theorem \ref{thm:main_error}:
\begin{align}
\label{eq:numerical_bound}
    \frac{3\mu^2}{8\left( \alpha_k+1 \right)(\zeta^2 + 16 \bar{L}^2)} \leq \frac{3\mu^2}{128 \bar{L}^2} \overset{\1}{\leq} \frac{3}{128},
\end{align}
where step $\1$ follows from the fact $\mu \leq \bar{L}$.

\begin{claim}\label{claim:eta}
Under the conditions of Theorem~\ref{thm:main_error}, the step-size schedule $\eta_k$ satisfies
\begin{align} \label{eq:eta_observation}
    \eta_k \leq \min \left\{\frac{3 \mu}{8\left( \alpha_k+1 \right)(\zeta^2 + 16\bar{L}^2)},\frac{p_k\log T_k}{\mu T_k} \right\}.
\end{align}
\end{claim}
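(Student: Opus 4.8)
The goal is to verify Claim~\ref{claim:eta}, namely that the prescribed step-size schedule
\[
\eta_k = \min \left\{\frac{3 \mu}{16(\zeta^2 + 16\bar{L}^2)},\frac{p_k\log T_k}{\mu T_k} \right\}
\]
satisfies the tighter bound
\[
\eta_k \leq \min \left\{\frac{3 \mu}{8( \alpha_k+1 )(\zeta^2 + 16\bar{L}^2)},\frac{p_k\log T_k}{\mu T_k} \right\}.
\]

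\textbf{Overall approach.} Since $\eta_k$ is itself a minimum of two quantities, and the second quantity $\frac{p_k \log T_k}{\mu T_k}$ appears identically on both sides, the only thing to prove is that the \emph{first} term in the definition of $\eta_k$ is bounded above by the first term on the right-hand side of \eqref{eq:eta_observation}; that is,
\[
\frac{3\mu}{16(\zeta^2+16\bar L^2)} \le \frac{3\mu}{8(\alpha_k+1)(\zeta^2+16\bar L^2)}.
\]
After cancelling the common positive factor $\frac{3\mu}{8(\zeta^2+16\bar L^2)}$, this reduces to the elementary inequality $\alpha_k + 1 \le 2$, i.e. $\alpha_k \le 1$.

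\textbf{Key step: showing $\alpha_k \le 1$.} Recall $\alpha_k = \tau_M/\tau_k$ with $\tau_k = B/2^k$, so $\alpha_k$ is decreasing in $k$ and its largest value over the run is $\alpha_1 = \tau_M/\tau_1 = 2\tau_M/B$. Hence it suffices to show $2\tau_M \le B$, which follows from the hypothesized buffer lower bound
\[
B \ge \frac{32 p_1 \tau_M(\zeta^2 + 16\bar{L}^2)}{3\mu^2}\log\!\left(\frac{32 p_1 \tau_M(\zeta^2 + 16\bar{L}^2)}{3\mu^2}\right):
\]
one checks that the right-hand side is at least $2\tau_M$, using that the leading coefficient $\frac{32 p_1 (\zeta^2+16\bar L^2)}{3\mu^2}$ is comfortably larger than $2$ (indeed $\bar L \ge \mu$ and $p_1 \ge 1$ force $\frac{32 p_1 (\zeta^2 + 16\bar L^2)}{3\mu^2} \ge \frac{512}{3} > 2$), and that $x\log x \ge x$ whenever $x \ge e$, which holds here since $\frac{32 p_1 \tau_M (\zeta^2+16\bar L^2)}{3\mu^2} \ge \tau_M \ge 1$ times a constant exceeding $e$. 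Therefore $\alpha_k \le \alpha_1 \le 1$ for all $k = 1,\dots,K$.

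\textbf{Conclusion.} With $\alpha_k \le 1$ in hand, $\alpha_k + 1 \le 2$, so $\frac{3\mu}{16(\zeta^2+16\bar L^2)} = \frac{3\mu}{8\cdot 2 \cdot(\zeta^2+16\bar L^2)} \le \frac{3\mu}{8(\alpha_k+1)(\zeta^2+16\bar L^2)}$, and taking the minimum with the common second term $\frac{p_k\log T_k}{\mu T_k}$ on both sides yields \eqref{eq:eta_observation}.

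\textbf{Anticipated obstacle.} There is no serious obstacle; the only mild care needed is in reading off $2\tau_M \le B$ from the buffer condition, i.e. confirming that the somewhat baroque lower bound on $B$ (a quantity times its own logarithm) is indeed at least $2\tau_M$. This is a routine monotonicity check once one isolates the factor $\tau_M$ and bounds the remaining constant below by something exceeding $e$ and by $2$; the $\log$ factor and $p_1 \ge 1$ only help.
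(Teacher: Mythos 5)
Your proof breaks down at the key step: the monotonicity of $\alpha_k$ is reversed. Since $\tau_k = B/2^k$ \emph{decreases} as $k$ grows, $\alpha_k = \tau_M/\tau_k = \tau_M 2^k/B$ is \emph{increasing} in $k$; its smallest value is $\alpha_1 = 2\tau_M/B$ and its largest is attained in the final epoch, where $\tau_k$ is of order $1$ and $\alpha_k$ is of order $\tau_M$. The paper itself relies on this (``we always have $\alpha_k \leq \tau_M$,'' with equality in the last epoch, and the discussion of the regime $\alpha_k \gg 1$), so your assertion that $\alpha_k \le 1$ for all $k$ is false whenever $\tau_M > 1$, no matter how large $B$ is. The buffer lower bound on $B$ only guarantees $\alpha_k \le 1$ for the early epochs; it cannot make the later epochs satisfy it. Consequently the reduction ``it suffices to show $\alpha_k+1\le 2$'' covers only the case $\tau_k \ge \tau_M$ and leaves the claim unproved exactly where it is nontrivial.

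The paper's proof handles the missing regime separately. In Case I ($\tau_k \ge \tau_M$, so $\alpha_k \le 1$) the argument is the one you gave. In Case II ($\tau_k \le \tau_M$, so $\alpha_k \ge 1$) the inequality $\frac{3\mu}{16(\zeta^2+16\bar L^2)} \le \frac{3\mu}{8(\alpha_k+1)(\zeta^2+16\bar L^2)}$ fails, and instead one must show that the \emph{other} branch of the minimum is small enough, i.e.\ that
\begin{align*}
\frac{p_k\log T_k}{\mu T_k} \;\le\; \frac{3\mu}{8(\alpha_k+1)(\zeta^2+16\bar L^2)}.
\end{align*}
This is precisely where the buffer condition enters: it yields $\frac{\log B}{B} \le \frac{3\mu^2}{16 p_1 \tau_M(\zeta^2+16\bar L^2)}$, and combining this with $k \le \log_2 B$, $T_k = 2^k$, $\tau_k = B/T_k$, $p_k \le p_1$, and $1+\alpha_k \le 2\tau_M/\tau_k$ gives the displayed bound. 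Your proposal uses the buffer condition only to deduce $B \ge 2\tau_M$ (which merely controls $\alpha_1$), so the essential content of the claim --- and the actual purpose of the baroque lower bound on $B$ --- is missing.
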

We take this statement as given for the moment and prove it in Section~\ref{pf:claim_eta}.

In order to proceed with our proof we need to bound $\Gamma_{\alpha_k+1}$, where $\Gamma_t$ is defined in Eq.~\eqref{eq:theta_defn}. We bound this term as follows:
\begin{align}\label{eq:theta_bound}
    \Gamma_{\alpha_k+1}   = (1+\eta_k \mu)^{\alpha_k+1} &\leq \left(1+ \frac{3\mu^2}{8(\zeta^2+16\bar{L}^2) \left(\alpha_k+1 \right)}\right)^{\alpha_k+1} \leq e^{\frac{3\mu^2}{8(\zeta^2+16 \bar{L}^2) }} < 2.
\end{align}
We can now bound the different error terms from Proposition~\ref{recursion_proposition}.
\subsubsection{\texorpdfstring{Bounding $R_1$}{}}
First we bound $G_1$ for the choice $\Gamma_t = (1+\eta_k \mu)^t$ and $\eta_k$ defined in Theorem~\ref{thm:main_error}. From the definition of $G_1$ in Proposition~\ref{recursion_proposition} we have
\begin{align*}
    G_1 &= \left(\frac{1}{2} + 2\eta_k^2E \right)(1+\eta_k \mu)+\eta_kC \sum\limits_{t'=1}^{\alpha_k+1}\rho^{\tau_k t'-1}(1+\eta_k \mu)^{t'} +2\eta_k^2 \bar{L}^2\sum\limits_{t'=1}^{\alpha_k+1} (1+\eta_k \mu)^{t'}\\
    &\quad~  +2\eta_k^2 \bar{L}^2\sum\limits_{t'=1}^{\alpha_k} (t'-1)(1+\eta_k \mu)^{t'} \\ 
    &\overset{\1}{\leq} \left(\frac{1}{2} + \eta_k^2E \right)(1+\eta_k \mu)+ \eta_k C \sum\limits_{t'=1}^{\alpha_k+1}(1+\eta_k \mu)^{t'} +2\eta_k^2 \bar{L}^2\frac{(1+ \eta_k \mu)^{\alpha_k+2}}{\mu \eta_k} + 2\eta_k^2 \bar{L}^2  \alpha_k\frac{(1+ \eta_k \mu)^{\alpha_k+1}}{\mu \eta_k} \\ 
    &\overset{}{\leq} \left(\frac{1}{2} + \eta_k^2E \right)(1+\eta_k \mu)+  C \frac{(1+ \eta_k \mu)^{\alpha_k+1}}{\mu} + 2\eta_k \bar{L}^2 \left(\alpha_k+(1+\eta_k \mu)\right)\frac{(1+ \eta_k \mu)^{\alpha_k+1}}{\mu },
\end{align*}
where step $\1$ follows from fact that $\rho$ lies in the interval $(0,1)$. Substituting the definitions for $E$ and condition for $\eta_k$ into the inequality above, we have
\begin{align*}
    G_1&\overset{\1}{\leq} \left(\frac{1}{2} + \frac{9 \mu^2}{\left( \alpha_k+1 \right)^2 64(\zeta^2 + 16 \bar{L}^2)^2}(\zeta^2+8 \bar{L}^2) \right) \left( 1 + \frac{3 \mu^2}{8\left( \alpha_k+1 \right)(\zeta^2 + 16 \bar{L}^2)}\right)\\ \nonumber
    &\qquad \qquad +  \frac{2C}{\mu} + \frac{6 \mu}{8\left( \alpha_k+1 \right)(\zeta^2 + 16 \bar{L}^2)} \bar{L}^2 \left(\alpha_k+1+\frac{3 \mu^2}{8\left( \alpha_k+1 \right)(\zeta^2 + 16 \bar{L}^2)}\right) \frac{2}{\mu} \\ 
    &\overset{\2}{\leq} \left(\frac{1}{2} + \frac{9 }{1024} \right)\frac{131}{128}  +   \frac{2C}{\mu} + \frac{12}{128}\frac{\alpha_k + 131/128 }{\alpha_k+1}  \leq 1+  \frac{2C}{\mu},
\end{align*}
where step $\1$ follows from Ineq.~\eqref{eq:eta_observation} and Ineq.~\eqref{eq:theta_bound} and step $\2$ follows from Ineq.~\eqref{eq:numerical_bound}.

Next we bound $A_{T_k}$. From the definition of $A_{T_k}$ in Proposition~\ref{recursion_proposition} we have
\begin{align}
    \nonumber
    A_{T_k}&=\left(\frac{1}{2} -\eta_k^2 \bar{L}^2 + \eta_k \mu \right)\Gamma_{T_k} = \left(\frac{1}{2} -\eta_k^2 \bar{L}^2 + \eta_k \mu  \right)(1+\eta_k \mu)^{T_k} \\ \nonumber
    &\geq \left(\frac{1}{2} -\eta_k \mu  \frac{3 \bar{L}^2}{8\left( \alpha_k+1 \right)(\zeta^2 + 16 \bar{L}^2)} + \eta_k \mu  \right)(1+\eta_k \mu)^{T_k} \\ \label{eq:A_bound}
    &\geq \left(\frac{1}{2} -  \frac{3 \eta_k \mu}{128} + \eta_k \mu  \right)(1+\eta_k \mu)^{T_k} = \left(\frac{1}{2} +  \frac{125 \eta_k \mu}{128}  \right)(1+\eta_k \mu)^{T_k} \geq \frac{1}{2}(1+\eta_k \mu)^{T_k}.
\end{align}
Finally combining our bounds on $G_1$ and $A_{T_k}$, we obtain
\begin{align*}
    \frac{G_1}{A_{T_k}}\leq 2\left(1+\frac{2C}{\mu} \right)\left(1+\eta_k \mu\right)^{-T_k}.
\end{align*}
Using the step-size upper bound from Ineq.~\eqref{eq:eta_observation} and replacing the max of two positive quantities by their sum, we obtain
\begin{align*}
    2&\left(1+\frac{2C}{\mu} \right)\left(1+\eta_k \mu\right)^{-T_k}\\
    &\leq 2\left(1+\frac{2C}{\mu} \right)\left(1+\frac{3 \mu^2}{\left( \alpha_k+1 \right)8(\zeta^2 + 16 \bar{L}^2)} \right)^{-T_k} + 2 \left(1+\frac{2C}{\mu} \right)\left(1+\frac{p_k \log T_k}{ T_k} \right)^{-T_k} \\ 
    &\leq 2\left(1+\frac{2C}{\mu} \right)\left(1+\frac{3 \mu^2}{\left( \alpha_k+1 \right)8(\zeta^2 + 16 \bar{L}^2)} \right)^{-T_k} + 2\left(1+\frac{2C}{\mu} \right) T_{k}^{-p_k} \\ \nonumber
    &\overset{\1}{=} 2\left(1+\frac{2C}{\mu} \right)\left(1+\frac{3 \mu^2}{\left( \alpha_k+1 \right)8(\zeta^2 + 16 \bar{L}^2)} \right)^{-T_k} + \frac{6\left(1+\frac{2C}{\mu} \right)}{\mu^2 T_k M \max(1,D^2)}   (6\sigma^2+ 4 \|F(x^*)\|^2),
\end{align*}
where step $\1$ follows by substituting the definition of $p_k$ from Eq.~\eqref{eq:definitions}. Continuing we have
\begin{align*}
    2\left(1+\frac{2C}{\mu} \right)\left(1+\eta_k \mu\right)^{-T_k} &\overset{\1}{\leq} 2\left(1+\frac{2C}{\mu} \right)\left(1+\frac{3 \mu^2}{\left( \alpha_k+1 \right)8(\zeta^2 + 16 \bar{L}^2)} \right)^{-T_k} \\ \nonumber
    &\qquad+ \frac{3}{\mu^2 T_k  D^2}   (6\sigma^2+ 4 \|F(x^*)\|^2) \\ \nonumber
    &\leq  2\left(1+\frac{2C}{\mu} \right)\left(1+\frac{3 \mu^2}{\left( \alpha_k+1 \right)8(\zeta^2 + 16 \bar{L}^2)} \right)^{-T_k} \\ \nonumber
    &\qquad+ \frac{3}{\mu^2 T_k  D^2}   (6\sigma^2+25\cm^2 \rho^{2(\tau_k-1)}+ 4 \|F(x^*)\|^2).
\end{align*}
where step $\1$ follows from the definition of $M$ in Eq.~\eqref{eq:definitions} and using the fact that $\frac{1}{\max(a,b)}\leq \frac{1}{b}$.
Thus we obtain
\begin{align}
\nonumber
    R_1 = \frac{G_1}{A_{T_k}} \|\initialiterate - x^*\|^2 &\leq 2\left(1+\frac{2C}{\mu} \right)\left(1+\frac{3 \mu^2}{\left( \alpha_k+1 \right)8(\zeta^2 + 16 \bar{L}^2)} \right)^{-T_k}\|\initialiterate - x^*\|^2 \\ \nonumber
    & \qquad+ \frac{3}{\mu^2 T_k D^2}   (6\sigma^2+25\cm^2 \rho^{2(\tau_k-1)}+ 4 \|F(x^*)\|^2)\|\initialiterate - x^*\|^2.
\end{align}
Substituting $\frac{\|\initialiterate - x^*\|}{D} \leq 1$ from Assumption~\ref{assmp:optimal_ball} we obtain
\begin{align} \nonumber
    R_1 &\leq 2\left(1+\frac{2C}{\mu} \right)\left(1+\frac{3 \mu^2}{\left( \alpha_k+1 \right)8(\zeta^2 + 16 \bar{L}^2)} \right)^{-T_k}D^2 \\ \label{eq:deterministic_error}
    & \qquad+ \frac{3}{\mu^2 T_k}   (6\sigma^2+25\cm^2 \rho^{2(\tau_k-1)}+ 4 \|F(x^*)\|^2).
\end{align}
\subsubsection{\texorpdfstring{Bounding $R_2$}{}}
Now consider the error due to the bias at the optimal solution $x^*$. We have
\begin{align*}
    R_2 &= \frac{10 \eta_k \cm}{A_{T_k}}\left(\sum_{t=1}^{\tau_M/\tau_k}\rho^{t\tau_k-1}(1+\eta_k \mu)^t + \sum_{t=\tau_M/\tau_k + 1}^{T_k}\rho^{\tau_M+\tau_k-1}(1+\eta_k \mu)^t\right) \\ 
    &\overset{\1}{\leq} \frac{10 \eta_k \cm}{A_{T_k}}\left( \sum_{t=1}^{\tau_M/\tau_k}(1+\eta_k \mu)^t +  \frac{\rho^{\tau_M+\tau_k-1}(1+\eta_k \mu)^{T_k+1}}{\eta_k \mu}\right) \\ 
    &\overset{\2}{\leq} \frac{40 \cm}{\mu}(1+\eta_k \mu)^{-T_k} +  \frac{20  \cm\rho^{\tau_M+\tau_k-1}}{ \mu}(1+\eta_k \mu),
\end{align*}
where step $\1$ follows from the fact that $\rho \in (0,1)$ and step $\2$ follows from Ineq. \eqref{eq:theta_bound} and the bound on $A_{T_k}$ from Ineq. \eqref{eq:A_bound}. We bound the first term in the above inequality similarly to how we bounded the ratio $G_1/A_{T_k}$ in $R_1$. In particular, using Ineq.~\eqref{eq:eta_observation} and replacing the max of two positive quantities by their sum, we obtain
\begin{align*}
    \left(1+\eta_k \mu\right)^{-T_k} &\leq \left(1+\frac{3 \mu^2}{\left( \alpha_k+1 \right)8(\zeta^2 + 16 \bar{L}^2)} \right)^{-T_k} + \left(1+\frac{p_k \log T_k}{ T_k} \right)^{-T_k} \\ 
    &\leq \left(1+\frac{3 \mu^2}{\left( \alpha_k+1 \right)8(\zeta^2 + 16 \bar{L}^2)} \right)^{-T_k} +  T_{k}^{-p_k} \\ 
    &\overset{}{=} \left(1+\frac{3 \mu^2}{\left( \alpha_k+1 \right)8(\zeta^2 + 16 \bar{L}^2)} \right)^{-T_k} \\ 
    &\qquad + \frac{3}{\mu^2 M T_k\max (1,\|\initialiterate   -x^*\|^2)}   (6\sigma^2+ 4 \|F(x^*)\|^2) \\
    &\overset{}{\leq} \left(1+\frac{3 \mu^2}{\left( \alpha_k+1 \right)8(\zeta^2 + 16 \bar{L}^2)} \right)^{-T_k}
     + \frac{3}{\mu^2 \frac{40\cm}{\mu} T_k}   (6\sigma^2+25\cm^2 \rho^{2(\tau_k-1)}+ 4 \|F(x^*)\|^2).
\end{align*}
Thus the error due to bias at $x^*$ is bounded as
\begin{align}
\nonumber
    R_2 &\leq \frac{40 \cm}{\mu} \left(1+\frac{3 \mu^2}{\left( \alpha_k+1 \right)8(\zeta^2 + 16\bar{L}^2)} \right)^{-T_k} + \frac{3}{\mu^2  T_k}   (6\sigma^2+25\cm^2 \rho^{2(\tau_k-1)}+ 4 \|F(x^*)\|^2) \\ \label{eq:optimal_bias_error}
    &\qquad +\frac{20  \cm\rho^{\tau_M+\tau_k-1}}{ \mu}(1+\eta_k \mu).
\end{align}
\subsubsection{\texorpdfstring{Bounding $R_3$}{}}
We now bound the stochastic error term. Substituting $\Gamma_t = (1+\eta_k \mu)^t$ in the definition of $D$ from Proposition \ref{recursion_proposition}, we obtain
\begin{align*}
    H &= \sum_{t=1}^{\tau_M/\tau_k}t(1+\eta_k \mu)^t+\sum_{t=\tau_M/\tau_k+1}^{T_k}(\tau_M/\tau_k+1)(1+\eta_k \mu)^t \\ 
    &\leq\sum_{t=1}^{T_k}(\tau_M/\tau_k+1)(1+\eta_k \mu)^t \\ \nonumber
    &\leq \left(\alpha_k +1 \right)\frac{(1+\mu \eta_k)^{T_k +1}}{\mu \eta_k}.
\end{align*}
Combining the bounds for $H$ from above and $A_{T_k}$ from Ineq.~\eqref{eq:A_bound}, we obtain
\begin{align*}
    \eta_k^2\frac{H}{A_{T_k}} &\leq 2\eta_k \left(\alpha_k +1 \right)\frac{(1+\mu \eta_k)}{\mu }\\
    &\overset{\1}{\leq} 2\eta_k \left(\alpha_k +1 \right)\frac{1+\frac{3 \mu^2}{8\left( \alpha_k+1 \right)(\zeta^2 + 16 \bar{L}^2)}}{\mu} \\
    &\overset{\2}{\leq} \frac{3 \eta_k}{\mu} \left(\alpha_k +1 \right) \leq  \frac{3p_k \log T_k}{\mu^2 T_k}\left(\alpha_k +1 \right),
\end{align*}
where step $\1$ follows from Ineq.~\eqref{eq:eta_observation} and step $\2$ follows from Ineq.~\eqref{eq:numerical_bound}.
Thus, 
\begin{align}
    \nonumber
     R_3 &= \frac{H}{A_{T_k}} \eta_k^2(6\sigma^2+25\cm^2 \rho^{2(\tau_k-1)}+ 4 \|F(x^*)\|^2) \\ \label{eq:stochastic_error} 
     &\leq  \frac{3p_k \log T_k}{\mu^2 T_k} \left(\alpha_k +1 \right)  (6\sigma^2+25\cm^2 \rho^{2\tau_k}+ 4 \|F(x^*)\|^2).
\end{align}

\subsubsection{Combining the bounds}
Substituting bounds from Ineqs.~\eqref{eq:deterministic_error}, \eqref{eq:optimal_bias_error}, and \eqref{eq:stochastic_error} into Proposition \ref{recursion_proposition}, we have that $\EE\|\finaliterate - x^*\|^2$ is bounded above by
\begin{align}
\nonumber
    &\frac{G_1}{A_{T_k}} \|\initialiterate - x^*\|^2+ \frac{H}{A_{T_k}} \eta_k^2(6\sigma^2+25\cm^2 \rho^{2(\tau_k-1)}+ 4 \|F(x^*)\|^2)+ \frac{1}{A_{T_k}}\sum_{t=1}^{T_k} \Gamma_t\frac{10\eta_k \cm^2 \rho^{(\kappa+1) \tau_k-1}}{\mu} \\ \nonumber &\leq 2\left(1 + \frac{2C}{\mu} \right)\left(1+\frac{3 \mu^2}{\left( \alpha_k+1 \right)8(\zeta^2 + 16 \bar{L}^2)} \right)^{-T_k}D^2 
    \\ \nonumber
    & \quad + \left(\frac{3}{\mu^2 T_k} + \frac{3p_k \log T_k}{\mu^2 T_k} \left(\alpha_k +1 \right)+\frac{3}{\mu^2 T_k}\right)  (6\sigma^2+25\cm^2 \rho^{2(\tau_k-1)}+ 4 \|F(x^*)\|^2) 
    \\ \nonumber 
    &\qquad +\frac{40 \cm}{\mu} \left(1+\frac{3 \mu^2}{\left( \alpha_k+1 \right)8(\zeta^2 + 16 \bar{L}^2)} \right)^{-T_k} +\frac{20  \cm\rho^{\tau_M+\tau_k-1}}{ \mu}(1+\eta_k \mu) \\ \nonumber
    &\leq M \left(1+\frac{3 \mu^2}{\left( \alpha_k+1 \right)8(\zeta^2 + 16 \bar{L}^2)} \right)^{-T_k}\left(D^2+1  \right)\\ \nonumber &\qquad + \frac{3(2+p_k\log T_k)}{\mu^2 T_k} \left(\alpha_k +1 \right)  (6\sigma^2+25\cm^2 \rho^{2(\tau_k-1)}+ 4 \|F(x^*)\|^2) + \frac{20  \cm\rho^{\tau_M+\tau_k-1}}{ \mu}(1+\eta_k \mu).
    \end{align}
This completes the proof of the arguments in Theorem~\ref{thm:main_error}. It remains to establish Claim~\ref{claim:eta}.
\subsubsection{Proof of Claim~\ref{claim:eta}}\label{pf:claim_eta}

Recall from Theorem~\ref{thm:main_error} that we chose the step-size
\begin{align*}
\eta_k =
\min \left\{\frac{3 \mu}{16(\zeta^2 + 16\bar{L}^2)},\frac{p_k\log T_k}{\mu T_k} \right\}.
\end{align*}
 We now divide the proof of the claim into two cases. 

\paragraph{Case I:}
 For the case $\tau_k \geq \tau_M$, i.e., $\alpha_k \leq 1$ and $\frac{1}{1+\alpha_k} \geq \frac{1}{2}$, we have the following:
\begin{align*}
    \frac{3 \mu}{16(\zeta^2 + 16 \bar{L}^2)} \leq \frac{3 \mu}{8\left( 1+\alpha_k \right)(\zeta^2 + 16 \bar{L}^2)}.
\end{align*}
Thus when $\eta_k = \frac{3 \mu}{16(\zeta^2 + 16 \bar{L}^2)}$ we have
\begin{align*}
    \eta_k = \frac{3 \mu}{16(\zeta^2 + 16 \bar{L}^2)} \leq \frac{3 \mu}{8\left( 1+\alpha_k \right)(\zeta^2 + 16 \bar{L}^2)}. 
\end{align*}
Whenever $\eta_k = \frac{p_k\log T_k}{\mu T_k}$ we have the following set of inequalities:
\begin{align*}
    \eta_k = \frac{p_k\log T_k}{\mu T_k} < \frac{3 \mu}{16(\zeta^2 + 16 \bar{L}^2)} \leq \frac{3 \mu}{8\left( 1+\alpha_k \right)(\zeta^2 + 16 \bar{L}^2)}.
\end{align*}
Thus we obtain
\begin{align*}
    \eta_k \leq \min \left\{\frac{3 \mu}{8(1+\alpha_k)(\zeta^2 + 16\bar{L}^2)},\frac{p_k\log T_k}{\mu T_k} \right\}
\end{align*}
This concludes our proof of Claim~\ref{claim:eta} in Case I.

\paragraph{Case II:} For the case $\tau_k \leq \tau_M$, i.e., $\alpha_k \geq 1$ and $\frac{1}{1+\alpha_k} \leq \frac{1}{2}$, we have
\begin{align}
\label{eq:intermediate_inequality}
    \frac{3 \mu}{16(\zeta^2 + 16 \bar{L}^2)} \geq \frac{3 \mu}{8\left( 1+\alpha_k \right)(\zeta^2 + 16 \bar{L}^2)}.
\end{align}
From the condition $B \geq \frac{32 p_1 \tau_M(\zeta^2 + 16 \bar{L}^2 )}{3\mu^2}\log \left( \frac{32 p_1 \tau_M(\zeta^2 + 16 \bar{L}^2)}{3\mu^2} \right)$ we have the following:
\begin{align*}
    \frac{\log B}{B} \leq \frac{3\mu^2}{16p_1\tau_M (\zeta^2 + 16 \bar{L}^2)}.
\end{align*}
Note that the total number of epochs  is upper bounded by $\log_2 B$, i.e., $k \leq \log_2 B$ for all $k= 1, \ldots, K$. Thus we have 
\begin{align*}
    \frac{p_1 k\log 2}{\mu } \leq  \frac{3 \mu B}{16\tau_M (\zeta^2 + 16 \bar{L}^2)}.
\end{align*}
Substituting the definitions $T_k=2^k$ and $\tau_k = \frac{B}{T_k}$ we have
\begin{align*}
    \frac{p_1 \log T_k}{\mu T_k} \leq  \frac{3 \mu \tau_k}{16\tau_M (\zeta^2 + 16 \bar{L}^2)}.
\end{align*}
Note that $p_k\leq p_1$ since $p_k$ is monotone across the epochs and $\left( 1+\alpha_k\right) \leq  \frac{2\tau_M}{\tau_k}$ in Case II. Substituting these bounds we obtain
\begin{align*}
    \frac{p_k \log T_k}{\mu T_k} \leq \frac{3 \mu}{8\left( 1+\alpha_k \right)(\zeta^2 + 16 \bar{L}^2)}.
\end{align*}
Combining this with Ineq.~\eqref{eq:intermediate_inequality} we obtain the following in Case II:
\begin{align*}
    \eta_k =\frac{p_k \log T_k}{\mu T_k} \leq \frac{3 \mu}{8\left( 1+\alpha_k \right)(\zeta^2 + 16 \bar{L}^2)} \leq \frac{3 \mu}{16(\zeta^2 + 16 \bar{L}^2)}.
\end{align*}

Combining the inequalities for the step-size parameter from Case I and Case II, we observe that the following condition is always satisfied:
\begin{align} 
    \eta_k \leq \min \left\{\frac{3 \mu}{8\left( \alpha_k+1 \right)(\zeta^2 + 16\bar{L}^2)},\frac{p_k\log T_k}{\mu T_k} \right\}.
\end{align}
This concludes the proof of Claim~\ref{claim:eta} and thus of Theorem~\ref{thm:main_error}.
\subsection{Proof of Proposition \ref{recursion_proposition}}\label{sec:pf_prop1}

We start by stating the ``three-point lemma'' for Algorithm~\ref{alg:two}, capturing the optimality condition of the SA update in Eq.~\eqref{eq:MER_update}.
\begin{lemma}
    \label{lemma:three_point_MER}
    Let $\updatediterate$ be generated according to the update defined in Eq.~\eqref{eq:MER_update}. 
    Then for any $x\in X$ we have
    \begin{align}\label{eq:base_eq}
        \eta_k \langle \widetilde{F}(\iterate,\xi_{t\tau_k}), \updatediterate-x \rangle + \frac{1}{2} \|\updatediterate - \iterate\|^2 \leq \frac{1}{2}\|\iterate-x\|^2 - \frac{1}{2}\|\updatediterate-x\|^2.
    \end{align}
\end{lemma}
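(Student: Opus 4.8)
The plan is to derive the claimed inequality directly from the first-order optimality condition of the strongly convex subproblem defining $\updatediterate$ in Eq.~\eqref{eq:MER_update}, combined with the elementary three-point (law of cosines) identity. No probabilistic input is needed; this is a purely deterministic consequence of the proximal update rule.

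First I would write $\updatediterate = \arg\min_{x\in X}\phi(x)$, where $\phi(x)\defn \eta_k \langle \widetilde{F}(\iterate,\xi_{t\tau_k}), x\rangle + \tfrac{1}{2}\|x-\iterate\|^2$. Since $\phi$ is convex and differentiable and $X$ is closed and convex, the minimizer $\updatediterate\in X$ is characterized by the variational inequality $\langle \nabla\phi(\updatediterate),\, x-\updatediterate\rangle \geq 0$ for all $x\in X$. Computing $\nabla\phi(\updatediterate)=\eta_k \widetilde{F}(\iterate,\xi_{t\tau_k}) + (\updatediterate-\iterate)$ and rearranging the resulting inequality gives, for all $x\in X$,
\[
\eta_k \langle \widetilde{F}(\iterate,\xi_{t\tau_k}),\, \updatediterate - x\rangle \;\leq\; \langle \updatediterate-\iterate,\; x-\updatediterate\rangle.
\]

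Next I would apply the identity $\langle a-b,\, c-a\rangle = \tfrac{1}{2}\|b-c\|^2 - \tfrac{1}{2}\|a-b\|^2 - \tfrac{1}{2}\|a-c\|^2$ with $a=\updatediterate$, $b=\iterate$, $c=x$, which turns the right-hand side above into $\tfrac{1}{2}\|\iterate-x\|^2 - \tfrac{1}{2}\|\updatediterate-\iterate\|^2 - \tfrac{1}{2}\|\updatediterate-x\|^2$. Substituting this in and moving the $\tfrac{1}{2}\|\updatediterate-\iterate\|^2$ term to the left-hand side yields exactly Ineq.~\eqref{eq:base_eq}.

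There is no genuine obstacle here — the argument is entirely routine — but the one point to handle with care is stating the optimality condition as a variational inequality over $X$ (rather than $\nabla\phi=0$, which would hold only in the unconstrained case), and tracking signs when rearranging so that the inner product appears as $\langle\cdot,\,\updatediterate-x\rangle$ and the quadratic terms land with the correct signs after invoking the cosine identity.
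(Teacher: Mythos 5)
Your proof is correct and is essentially the same argument as the paper's: the paper defines $\Phi(x) = \eta_k \langle \widetilde{F}(\iterate,\xi_{t\tau_k}), x\rangle + \tfrac{1}{2}\|\iterate-x\|^2$, invokes its (strong) convexity together with the optimality condition $\langle \nabla\Phi(\updatediterate), x-\updatediterate\rangle \geq 0$ over $X$, and substitutes back — which, since $\Phi$ is quadratic, is exactly your variational-inequality-plus-cosine-identity computation in slightly different packaging. No gap; both routes yield Ineq.~\eqref{eq:base_eq} directly.
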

We prove this lemma later in Section~\ref{sec:pf_lemma_three_point_MER}.

This lemma forms the basis of our analysis. We start by decomposing the inner product in Ineq.~\eqref{eq:base_eq} as follows:
\begin{align}
\nonumber
    \left \langle \widetilde{F}(\iterate,\xi_{t\tau_k}), \updatediterate-x \right\rangle &= \left\langle F(\updatediterate), \updatediterate-x \right\rangle + \left\langle F(\iterate)-F(\updatediterate), \updatediterate-x \right\rangle \\ \nonumber
    &\qquad + \left\langle  \widetilde{F}(\iterate,\xi_{t\tau_k})-  F(\iterate), \updatediterate-x \right\rangle \\ \nonumber
    &\overset{\1}{\geq} \left \langle F(\updatediterate), \updatediterate-x \right \rangle - L \|\updatediterate-\iterate\|\|\updatediterate-x\|\\ \label{eq:intermediate_inner_product_bound}
    &\qquad + \left \langle  \widetilde{F}(\iterate,\xi_{t\tau_k})-  F(\iterate), \updatediterate-x \right \rangle ,
\end{align}
where step $\1$ follows from Cauchy--Schwarz inequality and the Lipschitz property in Ineq. \eqref{eq:F_lipschitz}. We provide the following claim to bound the last term in Ineq.~\eqref{eq:intermediate_inner_product_bound}.

\begin{claim}\label{claim:inner_prod}
    Let $\pastiterate$ be a past iterate for some parameter $\kappa \in \{0,1,2,\ldots\}$. Then,
    \begin{align*}
        \left \langle  \widetilde{F}(\iterate,\xi_{t\tau_k})-  F(\iterate), \updatediterate-x \right \rangle &\geq -\|\widetilde{F}(\iterate,\xi_{t\tau_k})-  F(\iterate)\|\|\updatediterate-\iterate\| \\ 
        &\qquad+ \left\langle  \widetilde{F}(\pastiterate,\xi_{t\tau_k})- F(\pastiterate), \pastiterate-x \right\rangle \\
    &\qquad-\sum_{t' = t-\kappa}^{t-1} \eta_k \bigg(2\bar{L}^2\|x_{t'}^{(k)}-x\|^2+ 2\|\widetilde{F}(x,\xi_{t'\tau_k})\|^2 +2\bar{L}^2\|\pastiterate-x\|^2 \\ \nonumber
    &\qquad \qquad \qquad \qquad+ 2\|\widetilde{F}(\iterate,\xi_{t\tau_k})-  F(\iterate)\|^2 \bigg).
    \end{align*}
\end{claim}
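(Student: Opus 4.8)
The plan is to relate the "current" bias term $\langle \widetilde F(\iterate,\xi_{t\tau_k})-F(\iterate),\updatediterate-x\rangle$ to an analogous term evaluated at a past iterate $\pastiterate$, which is measurable with respect to an earlier $\sigma$-field and therefore amenable to the conditional bias bounds in Assumptions~\ref{assmp:optimal_condition}--\ref{assmp:mixing_norm_revised}. First I would pull off the dependence of the noise on the update direction: write $\updatediterate - x = (\updatediterate - \iterate) + (\iterate - x)$, and bound the $(\updatediterate-\iterate)$ contribution crudely via Cauchy--Schwarz to produce the term $-\|\widetilde F(\iterate,\xi_{t\tau_k})-F(\iterate)\|\,\|\updatediterate-\iterate\|$. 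This isolates $\langle \widetilde F(\iterate,\xi_{t\tau_k})-F(\iterate),\iterate-x\rangle$, a term that still involves $\iterate$ rather than the past iterate $\pastiterate$.

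Next I would telescope from $\iterate$ back to $\pastiterate$. For each $t'$ from $t-\kappa$ to $t-1$, I would write the one-step difference
\[
\langle \widetilde F(x_{t'+1}^{(k)},\xi_{t\tau_k})-F(x_{t'+1}^{(k)}),x_{t'+1}^{(k)}-x\rangle
- \langle \widetilde F(x_{t'}^{(k)},\xi_{t\tau_k})-F(x_{t'}^{(k)}),x_{t'}^{(k)}-x\rangle,
\]
and control it using two facts: (i) the Lipschitzness of $\widetilde F$ (Assumption~\ref{assmp:lipscitz1}) and of $F$ (Ineq.~\eqref{eq:F_lipschitz}), which together give $\|\widetilde F(x_{t'+1}^{(k)},\xi)-F(x_{t'+1}^{(k)}) - \widetilde F(x_{t'}^{(k)},\xi)+F(x_{t'}^{(k)})\|\le 2\bar L\|x_{t'+1}^{(k)}-x_{t'}^{(k)}\|$; and (ii) the update rule~\eqref{eq:MER_update}, which is a projection step and hence nonexpansive, giving $\|x_{t'+1}^{(k)}-x_{t'}^{(k)}\|\le \eta_k\|\widetilde F(x_{t'}^{(k)},\xi_{t'\tau_k})\|$. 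Expanding $\|\widetilde F(x_{t'}^{(k)},\xi_{t'\tau_k})\|\le \|\widetilde F(x,\xi_{t'\tau_k})\| + \bar L\|x_{t'}^{(k)}-x\|$ and applying Young's inequality to each cross term produces exactly the sum $\sum_{t'=t-\kappa}^{t-1}\eta_k(2\bar L^2\|x_{t'}^{(k)}-x\|^2 + 2\|\widetilde F(x,\xi_{t'\tau_k})\|^2 + 2\bar L^2\|\pastiterate-x\|^2 + \dots)$; the leftover $\|\widetilde F(\iterate,\xi_{t\tau_k})-F(\iterate)\|^2$ term in the claim presumably absorbs the Young's-inequality slack arising when one re-splits $\updatediterate-x$ or when handling the boundary term at $t'=t-1$. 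Summing the telescope collapses to $\langle \widetilde F(\pastiterate,\xi_{t\tau_k})-F(\pastiterate),\pastiterate-x\rangle$ minus the accumulated error, which is the stated bound.

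The main obstacle I anticipate is bookkeeping the Young's-inequality constants so that the final coefficients match the claimed $2\bar L^2$ and $2$ exactly — in particular deciding at which step each cross term of the form $\|\cdot\|\,\|x_{t'+1}^{(k)}-x_{t'}^{(k)}\|$ gets split, and ensuring the $\eta_k^2$ factors coming from nonexpansiveness combine with an $\eta_k^{-1}$ somewhere (or are simply left as $\eta_k$ times a squared-norm, which is what the statement shows) without generating extra terms. A secondary subtlety is that $\pastiterate$ with $\kappa$ large enough to reach back before the epoch's start needs the convention $x^{(k)}_{t'}$ for $t'\le 1$ to be interpreted as $\initialiterate$, or the claim is applied only for $\kappa\le t-1$; I would state this restriction explicitly. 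Everything else is a routine application of Lipschitz continuity, nonexpansiveness of the prox step, and Young's inequality.
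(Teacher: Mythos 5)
Your overall strategy---pass from the current iterate to a $\kappa$-step-old iterate, control the swap via Lipschitzness of $F$ and $\widetilde F$, bound the per-step displacement by $\eta_k\|\widetilde F(x_{t'}^{(k)},\xi_{t'\tau_k})\|$ using non-expansiveness of the prox step, expand $\|\widetilde F(x_{t'}^{(k)},\xi_{t'\tau_k})\|\le \|\widetilde F(x,\xi_{t'\tau_k})\|+\bar L\|x_{t'}^{(k)}-x\|$, and finish with Young's inequality---is the right toolkit and matches the ingredients of the paper's argument. However, the specific telescoping organization has a genuine gap at exactly the spot you flag with ``presumably absorbs the Young's-inequality slack.'' Each one-step difference in your telescope, say
\begin{align*}
\left\langle \widetilde F(x_{t'+1}^{(k)},\xi_{t\tau_k})-F(x_{t'+1}^{(k)}),\,x_{t'+1}^{(k)}-x\right\rangle-\left\langle \widetilde F(x_{t'}^{(k)},\xi_{t\tau_k})-F(x_{t'}^{(k)}),\,x_{t'}^{(k)}-x\right\rangle,
\end{align*}
necessarily splits into a ``Lipschitz-in-the-operator'' piece \emph{plus} a cross term of the form $\langle \widetilde F(x_{s}^{(k)},\xi_{t\tau_k})-F(x_{s}^{(k)}),\,x_{t'+1}^{(k)}-x_{t'}^{(k)}\rangle$ with $s\in\{t',t'+1\}$ an \emph{intermediate} iterate. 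These terms involve the noise magnitude at intermediate iterates evaluated at the \emph{current} sample $\xi_{t\tau_k}$, which do not appear anywhere in the claimed bound; to remove them you must write $\|\widetilde F(x_s^{(k)},\xi_{t\tau_k})-F(x_s^{(k)})\|\le \|\widetilde F(\iterate,\xi_{t\tau_k})-F(\iterate)\|+\bar L\|x_s^{(k)}-\iterate\|$ and then bound $\|x_s^{(k)}-\iterate\|$ by another sum of step displacements, producing double sums and, after Young's, terms such as $\|\iterate-x\|^2$ inside the sum that the claim does not contain. None of this is carried out in your sketch, and it is not clear the stated coefficients ($2\bar L^2$ and $2$, with exactly the four listed term types) survive it; so as written the proposal does not establish the claim in the form needed by Proposition~\ref{recursion_proposition}.

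The paper avoids the per-step telescope entirely: it performs a single four-term decomposition
\begin{align*}
\left\langle \widetilde F(\iterate,\xi_{t\tau_k})-F(\iterate),\updatediterate-x\right\rangle
&=\left\langle \widetilde F(\iterate,\xi_{t\tau_k})-F(\iterate),\updatediterate-\pastiterate\right\rangle
+\left\langle \widetilde F(\iterate,\xi_{t\tau_k})-\widetilde F(\pastiterate,\xi_{t\tau_k}),\pastiterate-x\right\rangle\\
&\quad+\left\langle \widetilde F(\pastiterate,\xi_{t\tau_k})-F(\pastiterate),\pastiterate-x\right\rangle
+\left\langle F(\pastiterate)-F(\iterate),\pastiterate-x\right\rangle,
\end{align*}
so the only noise norms that ever appear are at $\iterate$ and at $\pastiterate$. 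The two Lipschitz terms collapse to $-\bar L\|\iterate-\pastiterate\|\,\|\pastiterate-x\|$, and then only the aggregate displacement $\|\iterate-\pastiterate\|$ (and $\|\updatediterate-\pastiterate\|$) needs bounding, via the triangle inequality and the per-step contraction $\|\updatediterate-\iterate\|\le 2\eta_k\|\widetilde F(\iterate,\xi_{t\tau_k})\|$, followed by Young's inequality. If you reorganize your argument into this one-shot swap (keeping your non-expansiveness and Young's steps), the bookkeeping issue you anticipated disappears and the stated constants come out directly. Your side remark about the convention for $\kappa$ is harmless: the paper only ever takes $\kappa\le t-1$ (setting $\kappa=t-1$ in early iterations), so no out-of-range iterates arise.
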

Claim~\ref{claim:inner_prod} is proved later in Section~\ref{sec:pf_claim_inner_prod}.

Substituting Claim~\ref{claim:inner_prod} into Ineq.~\eqref{eq:intermediate_inner_product_bound} we obtain
\begin{align}
    \nonumber
    \big \langle & \widetilde{F}(\iterate,\xi_{t\tau_k}), \updatediterate-x \big\rangle \\ \nonumber
    &\geq \left \langle F(\updatediterate), \updatediterate-x \right \rangle - L \|\updatediterate-\iterate\|\|\updatediterate-x\| \\ \nonumber
    &\quad -\|\widetilde{F}(\iterate,\xi_{t\tau_k})-  F(\iterate)\|\|\updatediterate-\iterate\| + \left\langle  \widetilde{F}(\pastiterate,\xi_{t\tau_k})- F(\pastiterate), \pastiterate-x \right\rangle \\ 
    &\quad-\sum_{t' = t-\kappa}^{t-1} \eta_k \bigg(2\bar{L}^2\|x_{t'}^{(k)}-x\|^2+ 2\|\widetilde{F}(x,\xi_{t'\tau_k})\|^2 +2\bar{L}^2\|\pastiterate-x\|^2 + 2\|\widetilde{F}(\iterate,\xi_{t\tau_k})-  F(\iterate)\|^2 \bigg).\label{eq:intermediate_base_bound}
\end{align}

Substituting the bound from Ineq.~\eqref{eq:intermediate_base_bound} into the three point lemma of Ineq.~\eqref{eq:base_eq}, we obtain
\begin{align}
\nonumber
      &\frac{1}{2}\|\iterate-x\|^2 - \frac{1}{2}\|\updatediterate-x\|^2 \\ \nonumber &\geq  \frac{1}{2} \|\updatediterate - \iterate\|^2 + \eta_k \left\langle F(\updatediterate), \updatediterate-x \right\rangle - \eta_k L \|\updatediterate-\iterate\|\|\updatediterate-x\|\\ \nonumber
    &\quad -\eta_k \|\widetilde{F}(\iterate,\xi_{t\tau_k})-  F(\iterate)\|\|\updatediterate-\iterate\| + \eta_k \left \langle  \widetilde{F}(\pastiterate,\xi_{t\tau_k})- F(\pastiterate), \pastiterate-x \right \rangle \\ 
    &\quad-\sum_{t' = t-\kappa}^{t-1} \eta_k^2 \bigg(2\bar{L}^2\|x_{t'}^{(k)}-x\|^2+ 2\|\widetilde{F}(x,\xi_{t'\tau_k})\|^2 +2\bar{L}^2\|\pastiterate-x\|^2 + 2\|\widetilde{F}(\iterate,\xi_{t\tau_k})-  F(\iterate)\|^2 \bigg).\label{eq:inter_recursion}
\end{align}
Using Young's inequality we have
 \begin{align}
 \nonumber
     &\frac{1}{2} \|\updatediterate - \iterate\|^2- \eta_k L \|\updatediterate-\iterate\|\|\updatediterate-x\| -\eta_k \|\widetilde{F}(\iterate,\xi_{t\tau_k})-  F(\iterate)\|\|\updatediterate-\iterate\| \\ \nonumber
     &\geq \frac{1}{2} \|\updatediterate - \iterate\|^2 - \frac{1}{4} \|\updatediterate - \iterate\|^2-\eta_k^2 L^2\|\updatediterate-x\|^2 - \frac{1}{4} \|\updatediterate - \iterate\|^2\\
     \nonumber&\qquad - \eta_k^2\|\widetilde{F}(\iterate,\xi_{t\tau_k})-  F(\iterate)\|^2 \\ \nonumber
     &\overset{\1}{\geq} -\eta_k^2\bar{L}^2\|\updatediterate-x\|^2 - \eta_k^2\|\widetilde{F}(\iterate,\xi_{t\tau_k})-  F(\iterate)\|^2,
 \end{align}
 where step $\1$ follows from the fact $-L \geq -\bar{L}$.
 Substituting this bound into Ineq.~\eqref{eq:inter_recursion} we obtain
 \begin{align}
     \nonumber
      \frac{1}{2}\|\iterate-x\|^2 t   &\geq  \left(\frac{1}{2} -\eta_k^2 \bar{L}^2 \right)\|\updatediterate - x\|^2 + \eta_k \left \langle F(\updatediterate), \updatediterate-x \right\rangle + \eta_k  \left\langle  \widetilde{F}(\pastiterate,\xi_{t\tau_k})- F(\pastiterate), \pastiterate-x \right \rangle \\ \nonumber
    &\qquad - \eta_k^2\|\widetilde{F}(\iterate,\xi_{t\tau_k})-  F(\iterate)\|^2-\sum_{t' = t-\kappa}^{t-1} \eta_k^2 \bigg(2\bar{L}^2\|x_{t'}^{(k)}-x\|^2+ 2\|\widetilde{F}(x,\xi_{t'\tau_k})\|^2 +2\bar{L}^2\|\pastiterate-x\|^2 \\ \label{eq:inter_recursion1}
    &\qquad \qquad \qquad \qquad \qquad \qquad \qquad \qquad \qquad \qquad \qquad+ 2\|\widetilde{F}(\iterate,\xi_{t\tau_k})-  F(\iterate)\|^2 \bigg).
 \end{align}
 Using monotonicity from Ineq.~\eqref{eq:monotonicity} we have $$\eta_k \left \langle F(\updatediterate), \updatediterate-x^{*} \right\rangle \geq \eta_k \mu \|\updatediterate - x^{*}\|^2.$$ Taking $x = x^*$ in Ineq.~\eqref{eq:inter_recursion1} and using the above bound we obtain
\begin{align}
    \nonumber
      &\frac{1}{2}\|\iterate-x^*\|^2  \\ \nonumber &\geq  \left(\frac{1}{2} -\eta_k^2 \bar{L}^2 + \eta_k \mu \right)\|\updatediterate - x^*\|^2  + \eta_k \left \langle  \widetilde{F}(\pastiterate,\xi_{t\tau_k})- F(\pastiterate), \pastiterate-x^* \right\rangle\\
      \nonumber &\quad- \eta_k^2\|\widetilde{F}(\iterate,\xi_{t\tau_k})-  F(\iterate)\|^2 \\ 
    &\quad -\sum_{t' = t-\kappa}^{t-1} \eta_k^2 \bigg(2\bar{L}^2\|x_{t'}^{(k)}-x^*\|^2+ 2\|\widetilde{F}(x^*,\xi_{t'\tau_k})\|^2 +2\bar{L}^2\|\pastiterate-x^*\|^2 +2 \|\widetilde{F}(\iterate,\xi_{t\tau_k})-  F(\iterate)\|^2 \bigg).\label{eq:inter_recursion2}
\end{align}
Taking expectation in Ineq.~\eqref{eq:inter_recursion2} and using Young's inequality we obtain
\begin{align}
    \nonumber
      &\frac{1}{2}\EE\|\iterate-x^*\|^2  \\ \nonumber 
      &\geq  \left(\frac{1}{2} -\eta_k^2 \bar{L}^2 + \eta_k \mu \right)\EE\|\updatediterate - x^*\|^2  + \eta_k  \EE \left \langle  \widetilde{F}(\pastiterate,\xi_{t\tau_k})- F(\pastiterate), \pastiterate-x^* \right\rangle\\
      \nonumber &\qquad - \eta_k^2\EE\|\widetilde{F}(\iterate,\xi_{t\tau_k})-  F(\iterate)\|^2 \\ \nonumber
    &\qquad -\sum_{t' = t-\kappa}^{t-1} \eta_k^2 \bigg(2\bar{L}^2\EE\|x_{t'}^{(k)}-x^*\|^2+ 4\EE\|\widetilde{F}(x^*,\xi_{t'\tau_k})-F(x^*)\|^2\\ \label{eq:inter_recursion3}
    &\qquad \qquad \qquad \qquad +4\|F(x^*)\|^2 +2\bar{L}^2\EE\|\pastiterate-x^*\|^2 
    + 2\EE\|\widetilde{F}(\iterate,\xi_{t\tau_k})-  F(\iterate)\|^2 \bigg).
\end{align}

Substituting Lemma~\ref{lemma:mixing_norm}, and Lemma \ref{bias_variance_lemma} with scaling of $\tau_k$ into Ineq.~\eqref{eq:inter_recursion3} we obtain
\begin{align}
    \nonumber
      &\frac{1}{2}\EE\|\iterate-x^*\|^2  \\ \nonumber &\geq  \left(\frac{1}{2} -\eta_k^2 \bar{L}^2 + \eta_k \mu \right)\EE\|\updatediterate - x^*\|^2  - \eta_k \cp \rho^{(\kappa+1) \tau_k-1}  \EE\|\pastiterate-x^*\|^2 - \eta_k \cm \rho^{(\kappa+1) \tau_k-1}\EE\|\pastiterate-x^*\| \\ \nonumber
      & \qquad - \eta_k^2\left(\sigma^2 + 4 \cm^2 \rho^{2(\tau_k-1)}  + \left(\zeta^2 + 8\bar{L}^2\right)\EE\|\iterate-x^*\|^2\right)\\ \nonumber
    &\qquad -\sum_{t' = t-\kappa}^{t-1} \eta_k^2 \bigg(2\bar{L}^2\EE\|x_{t'}^{(k)}-x^*\|^2+ 4\sigma^2+16 \cm^2 \rho^{2(\tau_k-1)}+4\|F(x^*)\|^2 +2\bar{L}^2\EE\|\pastiterate-x^*\|^2 \\ 
     & \qquad \qquad \qquad \qquad +2\sigma^2+8\cm^2 \rho^{2(\tau_k-1)}+ 2\left(\zeta^2+ 8\bar{L}^2 \right)\EE\|\iterate-x^*\|^2 \bigg).
\end{align}
By rearranging the terms, we obtain
\begin{align}
\nonumber
    &\left(\frac{1}{2} -\eta_k^2 \bar{L}^2 + \eta_k \mu \right)\EE\|\updatediterate - x^*\|^2 \\ \nonumber &\leq \left( \frac{1}{2} + \sum_{t' = t-\kappa}^{t} 2\eta_k^2(\zeta^2 + 8\bar{L}^2)  \right)\EE\|\iterate-x^*\|^2 + \eta_k \cp \rho^{(\kappa+1) \tau_k-1}  \EE\|\pastiterate-x^*\|^2\\ \nonumber &\qquad+  \eta_k \cm \rho^{(\kappa+1) \tau_k-1}  \EE\|\pastiterate-x^*\|\ +\sum_{t' = t-\kappa}^{t-1} \eta_k^2 (2\bar{L}^2\EE\|x_{t'}^{(k)}-x^*\|^2+2\bar{L}^2\EE\|\pastiterate-x^*\|^2) \\ \nonumber
    &\qquad+ \sum_{t' = t-\kappa}^{t} \eta_k^2 (6\sigma^2+24\cm^2 \rho^{2(\tau_k-1)} + 4 \|F(x^*)\|^2). \\ \nonumber
\end{align}
By applying Young's inequality and Jensen's inequality to the inequality above, we have
\begin{align}
\nonumber
    &\left(\frac{1}{2} -\eta_k^2 \bar{L}^2 + \eta_k \mu \right)\EE\|\updatediterate - x^*\|^2 \\ \nonumber
    &\overset{}{\leq}\left( \frac{1}{2} + \left(\kappa+1\right)2\eta_k^2\left(\zeta^2 + 8\bar{L}^2\right)  \right)\EE\|\iterate-x^*\|^2 +  \eta_k \cp \rho^{(\kappa+1) \tau_k-1}  \EE\|\pastiterate-x^*\|^2 \\ \nonumber &\qquad+ 10\eta_k \cm \rho^{(\kappa+1) \tau_k-1}+\frac{\eta_k\cm \rho^{(\kappa+1) \tau_k-1}\EE\|\pastiterate-x^*\|^2}{40}\\
    \nonumber &\qquad +\sum_{t' = t-\kappa}^{t-1} \eta_k^2 \left(2\bar{L}^2\EE\|x_{t'}^{(k)}-x^*\|^2+2\bar{L}^2\EE\|\pastiterate-x^*\|^2\right) \\ \label{eq:intermediate_check}
    &\qquad+ \left(\kappa+1\right) \eta_k^2 \left(6\sigma^2+24\cm^2 \rho^{2(\tau_k-1)}+ 4 \|F(x^*)\|^2\right).
\end{align}
 Now we set the value of the free parameter $\kappa$, which decides how much into the past we condition on. We set $\kappa = \tau_M/\tau_k $ if $t \geq \tau_M/\tau_k +1$, and set $\kappa = t-1$ if $t \leq \tau_M/\tau_k$. Multiplying by $\Gamma_t = (1+\eta_k \mu)^t$ (Eq.~\eqref{eq:theta_defn}), using the definition of $C = \left(\frac{\cm}{40} + \cp \right)$ (Eq.~\eqref{eq:C_defn}) and summing from $t=1$ to $T_k$  we have
\begin{align*}
    &\sum_{t=1}^{T_k}\left(\frac{1}{2} -\eta_k^2 \bar{L}^2 + \eta_k \mu \right)\Gamma_t\EE\|\updatediterate - x^*\|^2 \\ \nonumber &\leq \sum_{t=1}^{T_k}\left( \frac{1}{2} + \left(\kappa+1\right)2\eta_k^2\left(\zeta^2 + 8\bar{L}^2\right)  \right)\Gamma_t \EE\|\iterate-x^*\|^2 \\ 
    &\qquad+ 10 \sum_{t=1}^{T_k} \Gamma_t \eta_k \cm \rho^{(\kappa+1) \tau_k-1}+\sum_{t=1}^{T_k} \eta_k C\rho^{(\kappa+1) \tau_k-1}\Gamma_t  \EE\|\pastiterate-x^*\|^2 \\ \nonumber &\qquad+\sum_{t=1}^{T_k}\sum_{t' = t-\kappa}^{t-1} \eta_k^2 \Gamma_t (2\bar{L}^2\EE\|x_{t'}^{(k)}-x^*\|^2+2\bar{L}^2\EE\|\pastiterate-x^*\|^2) \\
    &\qquad+ \sum_{t=1}^{T_k}\Gamma_t \left(\kappa+1\right) \eta_k^2 \left(6\sigma^2+25\cm^2 \rho^{2(\tau_k-1)}+ 4 \|F(x^*)\|^2\right).
\end{align*}

We now define some parameters $A_t$ and $G_t$ as follows:
\begin{align}\label{eq:defn_params} 
A_{t} &= \left(\frac{1}{2} -\eta_k^2 \bar{L}^2 + \eta_k \mu \right)\Gamma_{t}, \\ \nonumber
    G_t & = \left \{
\begin{array}{ll}
\Gamma_1\left(\frac{1}{2} + 2\eta_k^2E \right)+\eta_kC \sum\limits_{t'=1}^{\alpha_k+1}\rho^{\tau_k t'-1}\Gamma_{t'}+2\eta_k^2 \bar{L}^2\sum\limits_{t'=1}^{\alpha_k+1} \Gamma_{t'} +2\eta_k^2 \bar{L}^2\sum\limits_{t'=1}^{\alpha_k} (t'-1)\Gamma_{t'},
& t=1\\
\Gamma_t\left(\frac{1}{2} + 2\eta_k^2Et \right)  +\eta_k C\rho^{\tau_M+\tau_k-1}\Gamma_{t+\alpha_k}+ 2\eta_k^2 \bar{L}^2\sum\limits_{t' = t+1}^{t+\alpha_k}\Gamma_{t'}+2\eta_k^2\bar{L}^2\alpha_k\Gamma_{t+\alpha_k},                    & t \in [2,\alpha_k]\\
\Gamma_t\left(\frac{1}{2} + 2\eta_k^2E(\alpha_k+1) \right)  +\eta_k C\rho^{\tau_M+\tau_k-1}\Gamma_{t+\alpha_k}+ 2\eta_k^2 \bar{L}^2\sum\limits_{t' = t+1}^{t+\alpha_k}\Gamma_{t'}+2\eta_k^2\bar{L}^2\alpha_k\Gamma_{t+\alpha_k},&t \in (\alpha_k, T_k-\alpha_k]\\
\Gamma_t\left(\frac{1}{2} + 2\eta_k^2E(\alpha_k+1) \right) +2\eta_k^2 \bar{L}^2\sum\limits_{t' = t+1}^{T_k}\Gamma_{t'} ,&t \in (T_k-\alpha_k, T_k]
\end{array}
\right.\nonumber
\end{align}
Recall from Proposition~\ref{recursion_proposition} that $H = \sum_{t=1}^{\alpha_k}t\Gamma_t+\sum_{t=\alpha_k+1}^{T_k}(\alpha_k+1)\Gamma_t$.

Substituting the definitions of $A_t$ and $G_t$ from Eq.~\eqref{eq:defn_params} and $H$ we obtain
\begin{align*}
    \sum_{t=1}^{T_k}A_t\EE\|\updatediterate - x^*\|^2 &\leq \sum_{t=1}^{T_k}G_t \EE\|\iterate-x^*\|^2 + 10\sum_{t=1}^{T_k} \Gamma_t \eta_k \cm \rho^{(\kappa+1) \tau_k-1}\\
    &\qquad + H \eta_k^2\left(6\sigma^2+25\cm^2 \rho^{2(\tau_k-1)}+ 4 \|F(x^*)\|^2\right).
\end{align*}
The definition for $G_t$ is obtained by simple algebraic manipulations and counting the number of coefficients in the different regimes of the time step $t$. The definitions of $A_t$ and $H$ are clear from the above equation.
We claim that $G_{t+1} \leq A_{t}$, which we verify in Section~\ref{subsubsec:verify_condition}. Using this condition we obtain 
\begin{align*}
    A_{T_k}&\EE\|\finaliterate - x^*\|^2\\
    &\leq G_1 \|\initialiterate - x^*\|^2+ 10\sum_{t=1}^{T_k} \Gamma_t \eta_k \cm \rho^{(\kappa+1) \tau_k-1}+ H \eta_k^2\left(6\sigma^2+25\cm^2 \rho^{2(\tau_k-1)}+ 4 \|F(x^*)\|^2\right).
\end{align*}
Expanding the summation term in the above bound which appears due to the bias at the optimal solution $x^*$, we obtain
\begin{align*}
    10\sum_{t=1}^{T_k} \Gamma_t \eta_k \cm \rho^{(\kappa+1) \tau_k-1}= 10 \eta_k \cm\sum_{t=1}^{\tau_M/\tau_k}\rho^{t\tau_k-1}\Gamma_t + 10 \eta_k \cm\sum_{t=\tau_M/\tau_k + 1}^{T_k}\rho^{\tau_M+\tau_k-1}\Gamma_t.
\end{align*}
Finally, we arrive at
\begin{align}
\nonumber
    \EE\|\finaliterate - x^*\|^2&\leq \frac{G_1}{A_{T_k}} \|\initialiterate - x^*\|^2+ \frac{10 \eta_k \cm}{A_{T_k}}\left(\sum_{t=1}^{\tau_M/\tau_k}\rho^{t\tau_k-1}\Gamma_t + \sum_{t=\tau_M/\tau_k + 1}^{T_k}\rho^{\tau_M+\tau_k-1}\Gamma_t\right) \\ \nonumber 
    &\qquad+ \frac{H}{A_{T_k}} \eta_k^2\left(6\sigma^2+25\cm^2 \rho^{2(\tau_k-1)}+ 4 \|F(x^*)\|^2\right),
\end{align}
as in Proposition~\ref{recursion_proposition}.

It remains to verify the condition $A_t\geq G_{t+1}$ and to prove Lemma~\ref{lemma:three_point_MER} and Claim~\ref{claim:inner_prod}.
\subsubsection{\texorpdfstring{Verifying $A_t\geq G_{t+1}$}{}}\label{subsubsec:verify_condition}
Now we verify the condition $A_t \geq G_{t+1}$ is satisfied for the given choice of  parameters $\eta_k$ and $\Gamma_t$. From the definitions in Proposition~\ref{recursion_proposition} we have
\begin{align*}
    A_t &= \left(\frac{1}{2} -\eta_k^2 \bar{L}^2 + \eta_k \mu \right)\Gamma_t \text{ and for all } t\in[1,T_k-1] \\ 
    G_{t+1} &\leq \Gamma_{t+1}\left(\frac{1}{2} + 2\eta_k^2E\left(\alpha_k+1\right) \right)  +\eta_k C \rho^{\tau_M+\tau_k-1}  \Gamma_{t+1+\alpha_k}+ 2\eta_k^2 \bar{L}^2\sum\limits_{t' = t+2}^{t+1+\alpha_k}\Gamma_{t'}+2\eta_k^2\bar{L}^2\alpha_k\Gamma_{t+1+\alpha_k} \\ \nonumber
    &\overset{\1}{\leq} \Gamma_{t+1}\left(\frac{1}{2} + \eta_k^2E\left(\alpha_k+1\right) \right)  +\eta_k C \rho^{\tau_M+\tau_k-1} \Gamma_{t+1+\alpha_k}+ 2\eta_k^2 \bar{L}^2\alpha_k\Gamma_{t+1+\alpha_k}+2\eta_k^2\bar{L}^2\alpha_k\Gamma_{t+1+\alpha_k} \\ \nonumber\
    &\overset{\2}{\leq} \Gamma_{t+1}\left(\frac{1}{2} + \eta_k^2(\zeta^2+8 \bar{L}^2) \left(\alpha_k+1\right) \right)+\eta_k C \rho^{\tau_M+\tau_k-1} \Gamma_{t+1+\alpha_k}+ 4\eta_k^2 \bar{L}^2\alpha_k\Gamma_{t+1+\alpha_k},
\end{align*}
where step $\1$ follows from the increasing property of the parameter $\Gamma_t$ defined in Eq.~\eqref{eq:definitions} and step $\2$ follows by substituting the definition of $E=\zeta^2 + 8 \bar{L}^2$. In order to show $A_t \geq G_{t+1}$ we proceed as follows:
\begin{align}
\nonumber
    A_t -G_{t+1} &\geq \left(\frac{1}{2} -\eta_k^2 \bar{L}^2 + \eta_k \mu \right)\Gamma_t - \Gamma_{t+1}\left(\frac{1}{2} + \eta_k^2(\zeta^2+8 \bar{L}^2) \left(\alpha_k+1\right) \right) \\ \nonumber
    &\qquad-\eta_k C \rho^{\tau_M+\tau_k-1} \Gamma_{t+1+\alpha_k}- 4\eta_k^2 \bar{L}^2\alpha_k\Gamma_{t+1+\alpha_k} \\ \nonumber
    &\overset{\1}{\geq} \Gamma_{t}\left( \frac{\eta_k \mu}{2} -\eta_k^2 \bar{L}^2 - \eta_k^2(\zeta^2+8 \bar{L}^2) \left(\alpha_k+1\right)\Gamma-\eta_k C \rho^{\tau_M+\tau_k-1} \Gamma_{\alpha_k+1}- 4\eta_k^2 \bar{L}^2\alpha_k\Gamma_{\alpha_k+1}  \right) \\ \nonumber
    &\overset{}{\geq} \Gamma_t\left( \frac{\eta_k \mu}{2}  - \eta_k^2(\zeta^2+13 \bar{L}^2) \left(\alpha_k+1 \right)\Gamma_{\alpha_k+1}-\eta_k C \rho^{\tau_M+\tau_k-1} \Gamma_{\alpha_k+1}  \right) \\ \label{eq:intermediate_eq}
    &\overset{\2}{\geq} \Gamma_t\left( \frac{\eta_k \mu}{2}   -\frac{3 \eta_k \mu}{8}\Gamma_{\alpha_k+1} -\frac{ \eta_k \mu}{18} \Gamma_{\alpha_k+1}  \right),
\end{align}
where step $\1$ follows from the definition of $\Gamma_t$ and step $\2$ follows from the condition on $\eta_k$ discussed in Ineq.~\eqref{eq:eta_observation} and the condition $\tau_M = \frac{\log 18C/\mu}{\log 1/\rho}$.
Substituting the bound for $\Gamma_{\alpha_k+1}$ from Ineq.~\eqref{eq:theta_bound} into Ineq.~\eqref{eq:intermediate_eq}, we obtain
\begin{align*}
    A_t -G_{t+1} \geq 0.
\end{align*}
Thus we have verified the condition $A_t \geq G_{t+1}$ is satisfied for given choice of parameters.

\subsubsection{Proof of Lemma~\ref{lemma:three_point_MER}}\label{sec:pf_lemma_three_point_MER}
Let us define a function $\Phi(x) \defn  \eta_k \langle \widetilde{F}(\iterate,\xi_{t\tau_k}), x \rangle + \frac{1}{2} \|\iterate - x\|^2$. Using the strong convexity of $\Phi(x)$ we have
\begin{align}
    \Phi(x) \geq \Phi(\updatediterate)+ \langle \nabla \Phi(\updatediterate),x-\updatediterate \rangle + \frac{1}{2}\|\updatediterate-x\|^2.
\end{align}
Using the optimality condition for $\Phi(x)$ we have $\langle \nabla \Phi(\updatediterate),x-\updatediterate \rangle \geq 0$. Thus
\begin{align}
    \Phi(x) \geq \Phi(\updatediterate)+ \frac{1}{2}\|\updatediterate-x\|^2.
\end{align}
Substituting the definition for $\Phi(x)$ we get the result stated in Lemma \ref{lemma:three_point_MER}.

\subsubsection{Proof of Claim~\ref{claim:inner_prod}}\label{sec:pf_claim_inner_prod}
We now focus on bounding the last term in Ineq.~\eqref{eq:intermediate_inner_product_bound}. 

We begin by decomposing $\left \langle  \widetilde{F}(\iterate,\xi_{t\tau_k})-  F(\iterate), \updatediterate-x \right \rangle$ using a past iterate $\pastiterate$ for some parameter $\kappa \in \{0,1,2,\ldots\}$. This is an analysis tool introduced in order to handle the Markovian correlations in the data. We thus have
\begin{align}
\nonumber
     &\left \langle  \widetilde{F}(\iterate,\xi_{t\tau_k})-  F(\iterate), \updatediterate-x \right \rangle \\ \nonumber
     &\quad=  \left \langle  \widetilde{F}(\iterate,\xi_{t\tau_k})-  F(\iterate), \updatediterate-\pastiterate \right\rangle +  \left\langle  \widetilde{F}(\iterate,\xi_{t\tau_k})- \widetilde{F}(\pastiterate,\xi_{t \tau_k}), \pastiterate-x \right\rangle \\ \nonumber
     & \qquad + \left\langle  \widetilde{F}(\pastiterate,\xi_{t\tau_k})- F(\pastiterate), \pastiterate-x \right\rangle  +  \left \langle  F(\pastiterate)- F(\iterate), \pastiterate-x \right \rangle \\ \nonumber 
     &\quad \overset{\1}{\geq} - \|\widetilde{F}(\iterate,\xi_{t\tau_k})-  F(\iterate)\|\|\updatediterate-\pastiterate\|  - (\widetilde{L}_1+L)\|\pastiterate-\iterate\|\|\pastiterate-x\| \\ \nonumber
     &\qquad + \big\langle  \widetilde{F}(\pastiterate,\xi_{t\tau_k})- F(\pastiterate), \pastiterate-x \big\rangle \\ \nonumber
     &\quad \overset{\2}{=} - \|\widetilde{F}(\iterate,\xi_{t\tau_k})-  F(\iterate)\|\|\updatediterate-\pastiterate\|  - \bar{L}\|\pastiterate-\iterate\|\|\pastiterate-x\| \\ \label{eq:intermediate_bd}
     &\qquad + \left\langle  \widetilde{F}(\pastiterate,\xi_{t\tau_k})- F(\pastiterate), \pastiterate-x \right \rangle,
\end{align}
where step $\1$ follows from Cauchy--Schwarz inequality and the Lipschitz assumption in Ineq.~\eqref{eq:F_lipschitz} and Ineq.~\eqref{eq:lipscitz1}. In step $\2$ we use the parameter $\bar{L} = \widetilde{L}_1+ L$, defined in Eq.~\eqref{eq:definitions}.

We now bound the terms in Ineq.~\eqref{eq:intermediate_bd} separately.
We claim the following inequality and prove it at the end of the section:
\begin{align} \label{contraction_lemma}
        \|\updatediterate-\iterate\| \leq 2 \eta_k \|\widetilde{F}(\iterate,\xi_{t \tau_{k}})\|.
\end{align}
Using Eq.~\eqref{contraction_lemma} and the triangle inequality, we bound $\|\iterate - \pastiterate\|$ as follows:
\begin{align}
\label{eq:contraction+triangle}
    \|\iterate - \pastiterate\| \leq \sum_{t' = t-\kappa}^{t-1} 2\eta_k \|\widetilde{F}(x_{t'}^{(k)},\xi_{t' \tau_{k}})\| \overset{\1}{\leq} \sum_{t' = t-\kappa}^{t-1} 2\eta_k \left(\bar{L}\|x_{t'}^{(k)}-x\| + \|\widetilde{F}(x,\xi_{t' \tau_{k}})\|\right),
\end{align}
where step $\1$ follows from the Lipschitz assumption and the fact that $\widetilde{L}_1 \leq \bar{L}$.
Consequently,
\begin{align}
\nonumber
    \|\iterate - \pastiterate\|\|\pastiterate - x\| &\leq \sum_{t'=t-\kappa}^{t-1}2\eta_k\left(\bar{L}\|x_{t'}^{(k)}-x\|\|\pastiterate - x\| + \|\widetilde{F}(x,\xi_{t' \tau_{k}})\|\|\pastiterate - x\|\right) \\ \label{eq:product_bound}
    &\overset{\1}{\leq} \sum_{t' = t-\kappa}^{t-1} \eta_k \left( \frac{1}{\bar{L}}\|\widetilde{F}(x,\xi_{t'\tau_k})\|^2 +\bar{L} \|x_{t'}^{(k)}-x\|^2+2\bar{L} \|\pastiterate-x\|^2\right)
\end{align}
where step $\1$ follows from the Young's inequality.

We also have
\begin{align}
\nonumber
    &\|\widetilde{F}(\iterate,\xi_{t\tau_k})-  F(\iterate)\|\|\updatediterate-\pastiterate\| \\ \nonumber 
    & \leq \|\widetilde{F}(\iterate,\xi_{t\tau_k})-  F(\iterate)\| \left(\|\updatediterate-\iterate\|+\|\iterate-\pastiterate\| \right) \\ \nonumber
    &\overset{\1}{\leq} \|\widetilde{F}(\iterate,\xi_{t\tau_k})-  F(\iterate)\|\|\updatediterate-\iterate\| \\ \nonumber
    & \qquad+ \sum_{t' = t-\kappa}^{t-1} 2\eta_k\|\widetilde{F}(\iterate,\xi_{t\tau_k})-  F(\iterate)\| \left(\bar{L}\|x_{t'}^{(k)}-x\| + \|\widetilde{F}(x,\xi_{t' \tau_{k}})\|\right) \\ \nonumber
    &\overset{\2}{\leq} \|\widetilde{F}(\iterate,\xi_{t\tau_k})-  F(\iterate)\|\|\updatediterate-\iterate\| \\ \label{eq:intermediate_product_bound}
    & \qquad + \sum_{t' = t-\kappa}^{t-1} \eta_k \left(\bar{L}^2\|x_{t'}^{(k)}-x\|^2 + 2\|\widetilde{F}(\iterate,\xi_{t\tau_k})-  F(\iterate)\|^2 +  \|\widetilde{F}(x,\xi_{t' \tau_{k}})\|^2  \right).
\end{align}
where step $\1$ follows from the triangle inequality and Ineq.~\eqref{eq:contraction+triangle} while step $\2$ follows from Young's inequality.

Substituting the bounds from Ineq.~\eqref{eq:product_bound} and Ineq.~\eqref{eq:intermediate_product_bound} into Ineq.~\eqref{eq:intermediate_bd} we obtain
\begin{align*}
     U_3 &\geq -\|\widetilde{F}(\iterate,\xi_{t\tau_k})-  F(\iterate)\|\|\updatediterate-\iterate\| \\ 
    & \qquad - \sum_{t' = t-\kappa}^{t-1} \eta_k \left(\bar{L}^2\|x_{t'}^{(k)}-x\|^2 + 2\|\widetilde{F}(\iterate,\xi_{t\tau_k})-  F(\iterate)\|^2 +  \|\widetilde{F}(x,\xi_{t' \tau_{k}})\|^2  \right) \\ \nonumber
    & \qquad -\sum_{t' = t-\kappa}^{t-1} \eta_k \bar{L}\left( \frac{1}{\bar{L}}\|\widetilde{F}(x,\xi_{t'\tau_k})\|^2 +\bar{L} \|x_{t'}^{(k)}-x\|^2+2\bar{L} \|\pastiterate-x\|^2\right) \\ 
    & \qquad +\left \langle  \widetilde{F}(\pastiterate,\xi_{t\tau_k})- F(\pastiterate), \pastiterate-x \right\rangle \\ 
    &= -\|\widetilde{F}(\iterate,\xi_{t\tau_k})-  F(\iterate)\|\|\updatediterate-\iterate\| + \left\langle  \widetilde{F}(\pastiterate,\xi_{t\tau_k})- F(\pastiterate), \pastiterate-x \right\rangle \\
    &\qquad-\sum_{t' = t-\kappa}^{t-1} \eta_k \bigg(2\bar{L}^2\|x_{t'}^{(k)}-x\|^2+ 2\|\widetilde{F}(x,\xi_{t'\tau_k})\|^2 +2\bar{L}^2\|\pastiterate-x\|^2 \\ \nonumber
    &\qquad \qquad \qquad \qquad+ 2\|\widetilde{F}(\iterate,\xi_{t\tau_k})-  F(\iterate)\|^2 \bigg),
\end{align*} 
which completes the proof of Claim~\ref{claim:inner_prod}. It only remains to establish claim~\eqref{contraction_lemma}.

\paragraph{Proof of claim~\eqref{contraction_lemma}.}
 From Eq.~\eqref{eq:MER_update} we have
\begin{align*}
    \left \langle \eta_k \widetilde{F}(\iterate,\xi_{t\tau_k}),\updatediterate \right \rangle+\frac{1}{2} \|\updatediterate-\iterate\|^2  \leq \left \langle \eta_k \widetilde{F}(\iterate,\xi_{t\tau_k}),x \right \rangle+\frac{1}{2} \|x-\iterate\|^2 \text{ for all } x \in X.
\end{align*}
Substituting $x = \iterate$ we obtain
\begin{align*}
    \left \langle \eta_k\widetilde{F}(\iterate,\xi_{t\tau_k}),\updatediterate-\iterate \right \rangle+\frac{1}{2} \|\updatediterate-\iterate\|^2  \leq 0.
\end{align*}
Applying the Cauchy--Schwarz inequality yields
\begin{align*}
    -\eta_k \| \widetilde{F}(\iterate,\xi_{t\tau_k})\| \|\updatediterate-\iterate\| \leq -\frac{1}{2} \|\updatediterate-\iterate\|^2. 
\end{align*}
Rearranging the terms gives us the contraction lemma.

This concludes the proof of Proposition~\ref{recursion_proposition} including its auxiliary lemmas and claims.

\subsection{Proof of Theorem \ref{thm:emulate_iid}}\label{sec:pf_thm_emulate_iid}
In order to prove Theorem \ref{thm:emulate_iid}, it is useful to formally define the serial skipped experience replay (SSER) algorithm. This algorithm simply tracks the epoch of the MER algorithm in  which $\tau_k = \beta \tau_M$. Recalling the notation from Section~\ref{sec:following_iid}, let $x_{T+1}$ denote the iterate obtained after applying $T$-steps of the SSER update of Eq.~\eqref{eq:constrained_update} starting from some initialization. Conditioned on the initialization, the randomness in $x_T$ is due to the samples $ \{\xi_{t \beta \tau_M}\}_{t=1}^{T}$ used to update the iterates. Similarly, we define $\widetilde{x}_{T+1}$ as the iterate obtained by applying $T$-steps of SA on the i.i.d. sequence of samples $\{\widetilde{\xi}_{t \beta \tau_M}\}_{t=1}^{T}$, starting from the same initialization. Recall the error terms, $\Delta_{T+1} = x_{T+1}-x^*$ and $\widetilde{\Delta}_{T+1} = \widetilde{x}_{T+1}-x^*$. Both algorithms are initialized at $x_1$, so that $\widetilde{x}_1 = x_1$. We let $T = \frac{B}{\beta \tau_M}$.
\begin{algorithm}[H]
\caption{Serial Skipped Experience Replay}\label{alg:three}
\begin{algorithmic}
\STATE{\textbf{Input}: Memory Buffer $\{\xi_1,\xi_1,\ldots,\xi_{B}\}$, Effective mixing time of Markov chain $\tau_M$}, initialization $x_1$.
    \FOR{$t=1, \ldots, T$}
    \STATE{Using the sample $\xi_{t \beta \tau_M}$, perform one step of SA:}
    \STATE{
    \begin{align}\label{eq:constrained_update}
    x_{t+1} = \arg \min_{x \in X} \eta \langle \widetilde{F}(x_{t},\xi_{t\beta \tau_M}),x \rangle + \frac{1}{2}\|x_{t}-x\|^2.
    \end{align}}
\ENDFOR
\end{algorithmic}
\end{algorithm}
Having set up the notation, we proceed with our proof.
We define auxiliary sequences with unconstrained updates, as
\begin{align}\label{eq:unconstrained_update}
    x_{t+1}' &= x_{t} - \eta \widetilde{F}(x_{t},\xi_{t\beta \tau_M}),\text{ for } t=1,\ldots,T\nonumber\\
    \widetilde x_{t+1}' &= \widetilde x_{t} - \eta \widetilde{F}(\widetilde x_{t},\xi_{t\beta \tau_M}),\text{ for } t=1,\ldots,T.
\end{align}
Thus, at time step $t$, the iterate corresponding to $x_{t}$ before taking the projection is $x_{t}'$. Since $X$ is a closed convex set and the projection onto such sets are contractive, we have
\begin{align}\label{eq:convex_contraction}
    \|x_{t+1} - \widetilde{x}_{t+1}\| \leq \|x_{t+1}' - \widetilde{x}_{t+1}'\|,\text{ for all }t = 1,\ldots,T.
\end{align}
Next, we show that the function $h$ mapping the random variables $(\xi_{j \beta \tau_M})_{j = 1}^{T }$ to $\|\Delta_{T+1}\|$, which maps the states of a Markov chain to the error, is a Lipschitz continuous function. We have
\begin{align*}
    \|x_{T+1}-\widetilde{x}_{T+1}\| &\overset{\1}{\leq} \|x_{T+1}' - \widetilde{x}_{T+1}'\|\\ \nonumber &\overset{\2}{=} \|x_{T}-\eta \widetilde{F}(x_{T},\xi_{T\beta \tau_M})-\widetilde{x}_{T}+\eta \widetilde{F}(\widetilde{x}_{T},\widetilde{\xi}_{T\beta \tau_M})\| \\ 
    &\overset{}{\leq} \|x_{T}-\widetilde{x}_{T}\| + \eta \| \widetilde{F}(x_{T},\xi_{T\beta \tau_M})-\widetilde{F}(\widetilde{x}_{T},\widetilde{\xi}_{T\beta \tau_M})\| \\ 
    &\overset{\3}{\leq} (1+\eta \widetilde{L}_1) \|x_{T} -\widetilde{x}_{T}\| + \eta \widetilde{L}_2 \|\xi_{T\beta \tau_M}-\widetilde{\xi}_{T\beta \tau_M}\|,
\end{align*}
where step $\1$ follows from Ineq.~\eqref{eq:convex_contraction}, step $\2$ follows from the definition in Eq.~\eqref{eq:unconstrained_update} and step $\3$ follows from Assumption~\ref{assmp:lipscitz1} and Assumption~\ref{assmp:lipscitz2}.
Applying the above relation recursively yields
\begin{align*}
     \|x_{T+1} - \widetilde{x}_{T+1}\|&\overset{\1}{\leq} \eta \widetilde{L}_2  \sum_{i=1}^{T} \left((1+\eta \widetilde{L}_1)^{T-i}\|\xi_{i\beta \tau_M}-\widetilde{\xi}_{i\beta \tau_M}\|\right)   \\
     &\leq   \eta \widetilde{L}_2(1+\eta \widetilde{L}_1)^{T-1}\sum_{i=1}^{T} \left( \|\xi_{i\beta \tau_M}-\widetilde{\xi}_{i\beta \tau_M}\|\right),
\end{align*}
where step $\1$ follows from the fact that $x_1 = \widetilde{x}_1$. We thus have
\begin{align*}
    \left|\|\Delta_{T+1}\| - \|\widetilde{\Delta}_{T+1}\|\right| &\overset{\1}{\leq} \|\Delta_{T+1}-\widetilde{\Delta}_{T+1}\|= \|x_{T+1} -\widetilde{x}_{T+1}\| \leq \eta \widetilde{L}_2(1+\eta \widetilde{L}_1)^{T-1}\sum_{i=1}^{T} \left( \|\xi_{i\beta \tau_M}-\widetilde{\xi}_{i\beta \tau_M}\|\right),
\end{align*}
where step $\1$ follows from the triangle inequality. Squaring both sides we obtain
\begin{align*}
    \left|\|\Delta_{T+1}\| - \|\widetilde{\Delta}_{T+1}\|\right|^2 \overset{\1}{\leq}T\eta^2 \widetilde{L}_2^2\left(1+\eta \widetilde{L}_1 \right)^{2(T-1)} \sum_{i=1}^{T}  \|\xi_{i\beta \tau_M}-\widetilde{\xi}_{i\beta \tau_M}\|^2,
\end{align*}
where step $\1$ follows from the Young's inequality.
This proves that the function $h$ is Lipschitz with constant $\sqrt{T}\eta \widetilde{L}_2\left(1+\eta \widetilde{L}_1 \right)^{T-1}$.

Now note that by Kantorovich duality, we have
\begin{align}
    \mathcal{W}_1(\mu,\nu) = \sup _{g \in \text{Lip}_1} \big| \EE_{Y \sim \mu}[g(Y)] - \EE_{Y \sim \nu}[g(Y)] \big|,
\end{align}
where Lip$_{1}$ denotes the set of all 1-Lipschitz continuous functions $g: \mathcal{X}\mapsto \mathbb{R}$. Since $h$ is $\sqrt{T}\eta \widetilde{L}_2\left(1+\eta \widetilde{L}_1 \right)^{T-1}$-Lipschitz continuous, applying the above inequality yields
\begin{align}\label{eq:lipschitz_bound_res}
    \left|\EE\|\Delta_{T+1}\| - \EE\|\widetilde{\Delta}_{T+1}\|\right| \leq \sqrt{T}\eta \widetilde{L}_2\left(1+\eta \widetilde{L}_1 \right)^{T-1} \cdot \mathcal{W}_1\left(\{\xi_{i\beta \tau_M}\}_{i=1}^{T},\{\widetilde{\xi}_{i\beta \tau_M}\}_{i=1}^{T}\right)
\end{align}
where $\mathcal{W}_1\left(\{\xi_{i\beta \tau_M}\}_{i=1}^{T},\{\widetilde{\xi}_{i\beta \tau_M}\}_{i=1}^{T}\right)$ denotes the Wasserstein-1 distance between the Markov chain used by SSER to update the iterates and the i.i.d. sample sequence. 
In the following lemma we bound this Wasserstein-1 distance.
\begin{lemma}\label{wasserstein_bound}
    Let $\tau_M$ be the effective mixing time of the Markov chain.
    Define the stochastic processes
    \begin{align*}
        Z = \{\xi_1,\xi_{\beta \tau_M},\xi_{2\beta \tau_M},...,\xi_{T\beta \tau_M}\}, \\ \nonumber
        \widetilde{Z} = \{\widetilde{\xi}_1,\widetilde{\xi}_{\beta \tau_M},\widetilde{\xi}_{2\beta \tau_M},...,\widetilde{\xi}_{T\beta \tau_M}\},
    \end{align*}
    where $\widetilde{\xi}_i \sim \pi$ is drawn from the stationary distribution independently from everything else. Then for come universal positive constant $c_0$ the following holds:
    \begin{align*}
        \mathcal{W}_{1}(Z,\widetilde{Z}) \leq c_0 Te^{-\beta}.
    \end{align*}
\end{lemma}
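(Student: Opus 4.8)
\emph{Strategy.} The plan is to exhibit an explicit coupling of the two length‑$(T+1)$ processes $Z$ and $\widetilde{Z}$ and to bound its transportation cost. The starting observation is that, since the chain is initialized from its stationary distribution, every coordinate $\xi_{j\beta\tau_M}$ of $Z$ is \emph{marginally} distributed as $\pi$, exactly like the corresponding coordinate of $\widetilde{Z}$; the only obstruction to a perfect coupling is therefore the serial correlation among the coordinates of $Z$, and it is precisely this correlation that Assumption~\ref{assmp:wasserstein_mixing} lets us control. As a preliminary step I would upgrade the pointwise mixing bound to arbitrary initial laws: taking an optimal $\mathcal{W}_1$ coupling of $\mu$ and $\nu$ and gluing in the pointwise couplings of $\delta_x P^{\tau_M}$ and $\delta_y P^{\tau_M}$ shows $\mathcal{W}_1(\mu P^{\tau_M},\nu P^{\tau_M})\le c_0\,\mathcal{W}_1(\mu,\nu)$, and iterating this $\beta$ times together with $\pi P^{\tau_M}=\pi$ gives $\mathcal{W}_1(\delta_x P^{\beta\tau_M},\pi)\le c_0^{\beta}\,\mathcal{W}_1(\delta_x,\pi)\le c_0^{\beta}\,\EE_{y\sim\pi}\|x-y\|$ for every $x$ (non‑integer $\beta$ is handled by a routine rounding argument).

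\emph{The telescoping coupling.} For $i=0,1,\ldots,T$ I would define a hybrid process $H^{(i)}$ whose first $i+1$ coordinates coincide with the true subsampled chain $(\xi_1,\xi_{\beta\tau_M},\ldots,\xi_{i\beta\tau_M})$ and whose remaining $T-i$ coordinates form an independent i.i.d.\ $\pi$‑sequence; then $H^{(T)}$ has the law of $Z$ and $H^{(0)}$ has the law of $\widetilde{Z}$. By the triangle inequality for $\mathcal{W}_1$ it suffices to bound $\mathcal{W}_1(H^{(i)},H^{(i-1)})$ for each $i$. These two processes can share the prefix $(\xi_1,\ldots,\xi_{(i-1)\beta\tau_M})$ and the i.i.d.\ tail on coordinates $i+1,\ldots,T$ verbatim, both at zero cost, so the only coordinate that moves is coordinate $i$: in $H^{(i)}$ it is $\xi_{i\beta\tau_M}$, with conditional law $\delta_{\xi_{(i-1)\beta\tau_M}}P^{\beta\tau_M}$, whereas in $H^{(i-1)}$ it is a fresh $\pi$‑sample. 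Coupling these through the optimal $\mathcal{W}_1$ coupling of $\delta_{\xi_{(i-1)\beta\tau_M}}P^{\beta\tau_M}$ and $\pi$ conditionally on $\xi_{(i-1)\beta\tau_M}$, and taking expectations over the stationary prefix, yields $\mathcal{W}_1(H^{(i)},H^{(i-1)})\le c_0^{\beta}\,\EE_{x,y\sim\pi}\|x-y\|$. Summing over $i=1,\ldots,T$ gives $\mathcal{W}_1(Z,\widetilde{Z})\le T\,c_0^{\beta}\,\EE_{x,y\sim\pi}\|x-y\|$, which is of the claimed form $c_0\,T\,e^{-\beta}$ once the finite first‑moment spread of $\pi$ is absorbed into the universal constant and $c_0<1$ is used to pass from $c_0^{\beta}$ to $e^{-\beta}$.

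\emph{Main obstacle.} The delicate point is keeping both \emph{marginal laws} of the coupling exactly correct. In $H^{(i-1)}$ the i.i.d.\ tail—and in particular coordinate $i$—must be independent of the entire Markovian prefix, yet in the construction above coordinate $i$ of $H^{(i-1)}$ is produced by a coupling that conditions on $\xi_{(i-1)\beta\tau_M}$. The resolution is that the optimal coupling of $\delta_x P^{\beta\tau_M}$ and $\pi$ has \emph{second} marginal equal to $\pi$ for every $x$, so the value assigned to coordinate $i$ of $H^{(i-1)}$ has conditional law $\pi$ irrespective of the prefix and is therefore genuinely independent of it; an analogous check confirms that coordinate $i$ of $H^{(i)}$ keeps the correct joint law with the prefix and is independent of the tail. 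Once this bookkeeping and the two standard ingredients—the gluing/convexity lemma for $\mathcal{W}_1$ and the iteration of the $\tau_M$‑step contraction—are in place, the only remaining work is the constant chasing noted above.
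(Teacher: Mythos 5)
Your proposal is correct and follows essentially the same route as the paper: the paper likewise interpolates between $Z$ and $\widetilde{Z}$ by replacing one coordinate at a time and applies the triangle inequality for $\mathcal{W}_1$, bounding each single-coordinate replacement by $c_0 e^{-\beta}$ --- there by citing Lemma 4 of \citet{mou2024optimal} rather than iterating the $\tau_M$-step contraction of Assumption~\ref{assmp:wasserstein_mixing} as you do, and your coupling bookkeeping is in fact more careful than the paper's. The only caveat is the final constant-chasing: passing from $c_0^{\beta}\,\EE_{x,y\sim\pi}\|x-y\|$ to $e^{-\beta}$ requires the per-$\tau_M$ contraction factor to be at most $e^{-1}$ (not merely $c_0<1$) together with a finite first-moment spread of $\pi$, conditions implicit in the cited lemma's definition of the Wasserstein mixing time but not literally contained in Assumption~\ref{assmp:wasserstein_mixing} as stated.
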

We defer the proof of this lemma to the end of this section, noting that similar lemmas have been established for TV mixing~\citep[see, e.g.][]{wingit, pmlr-v291-nakul25a}.
Substituting Lemma \ref{wasserstein_bound} into Ineq.~\eqref{eq:lipschitz_bound_res} we obtain
\begin{align*}
    \left|\EE\|\Delta_{T+1}\| - \EE\|\widetilde{\Delta}_{T+1}\|\right| \leq c_0 \eta T^{3/2}  \widetilde{L}_2\left(1+\eta \widetilde{L}_1 \right)^{T-1}e^{-\beta}.
\end{align*}
Finally, using the definition of the step-size $\eta \leq \frac{1}{T \widetilde{L}_1}$ completes the proof of Theorem \ref{thm:emulate_iid}.

\subsubsection{Proof of Lemma \ref{wasserstein_bound}}
In order to prove the lemma, let us first define an intermediate stochastic process:
\begin{align}
\nonumber
\widetilde{Z_i} = \{ \xi_1,...,\xi_{(i-1)\beta \tau_M},\widetilde{\xi}_{i\beta \tau_M},\xi_{(i+1)\beta \tau_M},...,\xi_{T\beta \tau_M} \}.
\end{align}
This stochastic process replaces the $i$-th sample from $Z$ with a sample from the stationary distribution, which is independent of everything else. Under Assumption \ref{assmp:wasserstein_mixing},
Lemma 4 in \citet{mou2024optimal} states that $\mathcal{W}_1(\xi_{i\beta \tau_M},\widetilde{\xi}_{i\beta \tau_M}) \leq c_0 e^{-\beta} \text{ for all } i=1,...,T$. From the triangle inequality for the Wasserstein metric, we have 
\begin{align}
    \mathcal{W}_{1}(Z,\widetilde{Z})\leq \sum_{i=1}^{T} \mathcal{W}_{1}(Z,\widetilde{Z}_{i}) \leq c_0 Te^{-\beta}.
\end{align}

\subsection{Proof of Corollary~\ref{corollary:GLM_1}}\label{sec:pf_corollary_GLM_1}
In order to prove Corollary~\ref{corollary:GLM_1} we proceed by verifying the necessary assumptions and then invoking the statement of Theorem~\ref{thm:main_error}.

First, we show that the stochastic operator defined in Eq.~\eqref{eq:glm_definition} is $L_f \cdot D_a^2$-Lipschitz continuous in its first argument:
\begin{align*}
    \|\widetilde{F}(x_1,\xi)-\widetilde{F}(x_2,\xi)\| &= \|a f(a^T x_1)-a y-a f(a^T x_2)+a y\| \\ \nonumber
    &=\|a f(a^T x_1)-a f(a^T x_2)\| \\ 
    &\overset{\1}{\leq} L_f \cdot \|a\|^2 \cdot \|x_1 - x_2\| \\
    &\overset{\2}{\leq} L_f \cdot  D_a^2 \cdot \|x_1 - x_2\|,
\end{align*}
where step $\1$ follows from the Lipschitz continuity of the link function $f$ and step $\2$ follows from the boundedness of the regressor trajectory, i.e. $\|a_t\| \leq D_a$ a.s. for all $t$. Thus, for GLMs, the parameter $\bar{L} = L + \widetilde{L}_1 = 2 \cdot L_f \cdot D_a^2$, and this verifies Assumption~\ref{assmp:lipscitz1}. 

We now verify Assumption~\ref{assmp:state_dependent_noise} for the GLM as follows:
\begin{align*}
    &\quad \EE\left[ \big\| \widetilde{F}(x,\xi_{t'})-\EE [ \widetilde{F}(x,\xi_{t'}) | \Fspace_{t-1}] \big\|^2 \big| \Fspace_{t-1} \right] \\ 
    &\overset{\1}{=}\EE\left[ \big\| a_{t'}(f(a_{t'}^Tx)-y_{t'})-\EE [ \widetilde{F}(x,\xi_{t'}) | \Fspace_{t-1}]+\EE [ \widetilde{F}(x^*,\xi_{t'}) | \Fspace_{t-1}]\big\|^2\big| \Fspace_{t-1} \right] \\
    &\overset{}{=}\EE\left[ \big\| a_{t'}(f(a_{t'}^Tx)-f(a_{t'}^Tx^*))+a_{t'}(f(a_{t'}^Tx^*)-y_{t'})-\EE [ a_{t'}(f(a_{t'}^Tx)-f(a_{t'}^Tx^*)) | \Fspace_{t-1}]\big\|^2\big| \Fspace_{t-1} \right] \\
    &\overset{\2}{\leq} 3\EE\left[ \big\| a_{t'}(f(a_{t'}^Tx)-f(a_{t'}^Tx^*))\big\|^2\big| \Fspace_{t-1} \right]+3\EE\left[\big\|a_{t'}(f(a_{t'}^Tx^*)-y_{t'})\big\|^2\big| \Fspace_{t-1} \right] \\ &\qquad+3\EE\left [ \big\|\EE[a_{t'}(f(a_{t'}^Tx)-f(a_{t'}^Tx^*)) | \Fspace_{t-1}]\big\|^2\big| \Fspace_{t-1} \right],
\end{align*}
where step $\1$ follows from the fact that for GLMs $\EE[\widetilde{F}(x^*,\xi_{t'})|\Fspace_{t-1}] = \EE[a_{t'}v_{t'}|\Fspace_{t-1}] =0$ since $\zeta_{t'}$ is zero mean i.i.d. noise and step $\2$ follows from Young's inequality. Now using the boundedness of the regressor trajectory and Lipschitzness of $f$ we have:
\begin{align*}
    &\quad \EE\left[ \big\| \widetilde{F}(x,\xi_{t'})-\EE [ \widetilde{F}(x,\xi_{t'}) | \Fspace_{t-1}] \big\|^2 \big| \Fspace_{t-1} \right] \\
    &\overset{}{\leq} 6\cdot L_f^2 \cdot D_a^4 \cdot \|x-x^{*}\|^2+3\EE\left[\big\|a_{t'}v_{t'}\big\|^2\big| \Fspace_{t-1} \right] \\
    &\leq 6\cdot L_f^2 \cdot D_a^4 \cdot \|x-x^{*}\|^2 + 3\cdot D_a^2\cdot \sigma_v^2.
\end{align*}
Thus Assumption~\ref{assmp:state_dependent_noise} is satisfied with $\sigma^2 = 6 D_a^2 \sigma_v^2$ and $\zeta^2 = 12 L_f^2 D_a^4$.

 For GLMs, $F(x^*) = 0$ and from Eq.~\eqref{eq:GLM_model} recall that we have $\EE[\widetilde{F}(x^*,\xi_{t'})|\Fspace_{t-1}] = \EE[a_{t'}v_{t'}|\Fspace_{t-1}] =0$. Thus Assumption~\ref{assmp:optimal_condition} is satisfied with $\cm=0$.

Let $\pi$ denote the stationary distribution of $\xi_t = (a_t,y_t)$ and recall that $P_{t-1}^{t'}$ denotes the conditional distribution of $\xi_{t'}$ conditioned on $\mathcal{F}_{t-1}$. 
We verify Assumption~\ref{assmp:mixing_norm_revised} using the following steps:
\begin{align*}
    &\quad\|F(x)-\EE[\widetilde{F}(x,\xi_{t'})|\Fspace_{t-1}]-F(y)+\EE[\widetilde{F}(y,\xi_{t'})|\Fspace_{t-1}]\|  \\ &= \|\EE_{\xi \sim \pi}[\widetilde{F}(x,\xi)-\widetilde{F}(y,\xi)]-\EE[\widetilde{F}(x,\xi_{t'})-\widetilde{F}(y,\xi_{t'})|\Fspace_{t-1}]\| \\ 
    &= \left\|\int_{\xi \in \Xi}(\widetilde{F}(x,\xi)-\widetilde{F}(y,\xi))(d\pi(\xi) - dP_{t-1}^{t'}(\xi))\right\|.
\end{align*}
 Using the Lipschitz property of $\widetilde{F}$ and the definition of the total variation distance, we obtain
\begin{align*}
    &\quad\|F(x)-\EE[\widetilde{F}(x,\xi_{t'})|\Fspace_{t-1}]-F(y)+\EE[\widetilde{F}(y,\xi_{t'})|\Fspace_{t-1}]\| \\ &\leq 2\widetilde{L}_1 \|x-y\| \TV(\Pi,P_{t-1}^{t'}) \overset{\1}{\leq} 2\widetilde{L}_1 C_{TV}\cdot  \rho_{TV}^{t'-t} \cdot \|x-y\|,
\end{align*}
where step $\1$ again follows from Assumption~\ref{assmp:TV_mixing}. Thus for GLMs Assumption~\ref{assmp:mixing_norm_revised} is satisfied with $\rho_2= \rho_{TV}$ and $\cp = 2\widetilde{L}_1 C_{TV}$.

Since all these assumptions hold, invoking Theorem~\ref{thm:main_error} yields a proof of Corollary~\ref{corollary:GLM_1}.

\subsection{Proof of Corollary~\ref{corollary:GLM_2}}\label{sec:pf_corollary_GLM_2}
Theorem~\ref{thm:emulate_iid} requires the stochastic operator $\widetilde{F}$ to be Lipschitz in its second argument, i.e., the covariates and observations $(a_t,y_t)$ for GLMs.

Recall $D_f$ is the bound for the link function $f$ from Ineq.~\eqref{eq:f_bound}. We also recall that $a_t \leq D_a$ almost surely for all $t$ and $X$ has a diameter at most $D_x$.

We have
\begin{align}
\nonumber
    \|\widetilde{F}(x,\xi_1)-\widetilde{F}(x,\xi_2)\| &= \|a_1 f(a_1^T x)-a_1 y_1-a_2 f(a_2^T x)+a_2 y_2\| \\ \nonumber
    &\leq  \|a_1 f(a_1^T x)-a_2 f(a_2^T x)\|+\|a_1 y_1-a_2 y_2\| \\ \nonumber
    &= \|a_1 f(a_1^T x)-a_1 f(a_2^T x)+a_1 f(a_2^T x)-a_2 f(a_2^T x)\| \\ \nonumber
    & \qquad+\|a_1 y_1-a_1 y_2+a_1 y_2-a_2 y_2\| \\ \nonumber
    &\leq L_{f}\, \|a_1\|\, \|x\| \, \|a_1-a_2\|+ |f(a_2^T x)|\,\|a_1-a_2\| \\ \label{eq:intermediate_lipschitz_bound}
    & \qquad+ \|a_1\|\,|y_1-y_2| + |y_2|\,\|a_1-a_2\|. \\ \nonumber
\end{align}
Substituting $|f(a^T x)| \leq D_f$ along with the assumption that $\|x_t\| \leq D_x$ and $|y_t| \leq D_y$ for all $t$ into Ineq. \eqref{eq:intermediate_lipschitz_bound}, we obtain
\begin{align}
    \|\widetilde{F}(x,\xi_1)-\widetilde{F}(x,\xi_2)\| &\leq \left( L_f D_a D_x + D_f + D_y\right)\|a_1-a_2\|+D_a |y_1 - y_2|.
\end{align}
This proves that the stochastic operator $\widetilde{F}$ 
is Lipschitz in its second argument with Lipschitz constant $ \widetilde{L}_2 = \sqrt{2} \max (L_f D_a D_x + D_f + D_y , D_a)$. 

Having verified the necessary assumption, invoking Theorem~\ref{thm:emulate_iid} proves Corollary~\ref{corollary:GLM_2}.

\subsection{Proof of Corollary~\ref{corollary:RL_1}}\label{sec:pf_corollary_RL_1}
Using the assumption of bounded feature vector and bounded rewards, we can show that, for the policy evaluation problem in RL, the stochastic operator defined in Eq.~\eqref{eq:RL_operator} is $(1+\gamma)\cdot D_{\psi}^2$-Lipschitz continuous in its first argument:
\begin{align*}
    \left\| \widetilde{F}(\theta, \xi_k)- \widetilde{F}(\theta', \xi_k)\right\| &= \left\|\left( \langle \psi(s_k),\theta - \theta' \rangle -\gamma \langle\psi(s_{k+1}),\theta-\theta'  \rangle \right)\psi(s_k)\right\| \\ 
    &=  \left\|\langle \psi(s_k)-\gamma \psi(s_{k+1}),\theta - \theta' \rangle  \psi(s_k)\right\|\\
    &\overset{\1}{\leq} \left\|\psi(s_k)\right\| \left\|\psi(s_k)-\gamma \psi(s_{k+1})\right\|\left\|\theta-\theta'\right\| \\ 
    &\overset{\2}{\leq} (1+\gamma) \cdot D_{\psi}^2 \cdot \|\theta - \theta'\|,
\end{align*}
where step $\1$ follows from Cauchy--Schwarz inequality and step $\2$ follows from boundedness.
Thus, we have $\bar{L} = L + \widetilde{L}_1 = (1+\gamma) (D_{\psi}^2 + \lambda_{max}(Q)) $. 

Assumption~\ref{assmp:state_dependent_noise} is verified as follows:
\begin{align*}
    &\quad \EE\left[ \big\| \widetilde{F}(\theta,\xi_{t'})-\EE [ \widetilde{F}(\theta,\xi_{t'}) | \Fspace_{t-1}] \big\|^2 \big| \Fspace_{t-1} \right] \\
    &= \EE\left[ \big\| \widetilde{F}(\theta,\xi_{t'})-\widetilde{F}(\bar{\theta},\xi_{t'})+\widetilde{F}(\bar{\theta},\xi_{t'})-\EE [ \widetilde{F}(\bar{\theta},\xi_{t'}) | \Fspace_{t-1}]+\EE [ \widetilde{F}(\bar{\theta},\xi_{t'}) | \Fspace_{t-1}]-\EE [ \widetilde{F}(\theta,\xi_{t'}) | \Fspace_{t-1}] \big\|^2 \big| \Fspace_{t-1} \right] \\ 
    &\overset{\1}{\leq} 3\EE\left[ \big\| \widetilde{F}(\theta,\xi_{t'})-\widetilde{F}(\bar{\theta},\xi_{t'})\big\|^2 \big| \Fspace_{t-1} \right] + 3\EE\left[ \big\| \widetilde{F}(\bar{\theta},\xi_{t'})-\EE [ \widetilde{F}(\bar{\theta},\xi_{t'}) | \Fspace_{t-1}]\big\|^2 \big| \Fspace_{t-1} \right] + \\ 
    &\qquad 3\EE\left[ \big\| \EE [ \widetilde{F}(\bar{\theta},\xi_{t'}) | \Fspace_{t-1}]-\EE [ \widetilde{F}(\theta,\xi_{t'}) | \Fspace_{t-1}]\big\|^2 \big| \Fspace_{t-1} \right] \\
    &\overset{\2}{\leq} 6 (1+\gamma)^2 D_{\psi}^4 \|\theta - \bar{\theta}\|^2 +  3\EE\left[ \big\| \widetilde{F}(\bar{\theta},\xi_{t'})-\EE [ \widetilde{F}(\bar{\theta},\xi_{t'}) | \Fspace_{t-1}]\big\|^2 \big| \Fspace_{t-1} \right] \\
    &\overset{\3}{\leq} 6 (1+\gamma)^2 D_{\psi}^4 \|\theta - \bar{\theta}\|^2 +  6\EE\left[ \big\| \widetilde{F}(\bar{\theta},\xi_{t'})\big\|^2 \big| \Fspace_{t-1} \right]+6\EE\left[ \big\|\EE [ \widetilde{F}(\bar{\theta},\xi_{t'}) | \Fspace_{t-1}]\big\|^2 \big| \Fspace_{t-1} \right] ,
\end{align*}
where step $\1$ and step $\3$ follow from the Young's inequality and step $\2$ from the Lipschitz property of $\widetilde{F}$. We now bound the last term in the above inequality as follows:
\begin{align*}
    \|\widetilde{F}(\bar{\theta},\xi_{t'})\|^2 &= \left\|(\langle \psi(s_{t'})-\gamma \psi(s_{t'+1}),\bar{\theta}  \rangle - R(s_{t'},s_{t'+1}))  \psi(s_{t'})\right\|^2 \\ 
    &\overset{\1}{\leq} 2\|\psi(s_{t'})-\gamma \psi(s_{t'+1})\|^2 \|\bar{\theta}\|^2 \|\psi(s_{t'})\|^2 + 2 |R(s_{t'},s_{t'+1}))|^2\|\psi(s_{t'})\|^2 \\
    &\overset{\2}{\leq} 2(1+\gamma)^2 D_{\psi}^4 D^2 + 2\bar{R}^2 D_{\psi}^2,
\end{align*}
where step $\1$ follows from the Young's inequality and Cauchy-Schwarz inequality, and step $\2$ follows from boundedness and Assumption~\ref{assmp:optimal_ball}. Using the above bound we have
\begin{align*}
    &\quad \EE\left[ \big\| \widetilde{F}(\theta,\xi_{t'})-\EE [ \widetilde{F}(\theta,\xi_{t'}) | \Fspace_{t-1}] \big\|^2 \big| \Fspace_{t-1} \right] \\
    &\overset{}{\leq} 6 (1+\gamma)^2 D_{\psi}^4 \|\theta - \bar{\theta}\|^2 +  24(1+\gamma)^2 D_{\psi}^4 D^2+ 24\bar{R}^2 D_{\psi}^2.
\end{align*}
Thus Assumption~\ref{assmp:state_dependent_noise} is satisfied with $\sigma^2 = 48(1+\gamma)^2 D_{\psi}^4 D^2+ 48 D_{\psi}^2 \bar{R}^2$ and $\zeta^2 = 12 (1+\gamma)^2 D_{\psi}^4$.
Assumption~\ref{assmp:optimal_condition} and Assumption~\ref{assmp:mixing_norm_revised} are verified in Lemma 1 and Lemma 2 of  \citet{li2023accelerated}, respectively.
We thus invoke Theorem~\ref{thm:main_error} to complete the proof of Corollary~\ref{corollary:RL_1}.

\subsection{Proof of Corollary~\ref{corollary:RL_2}}\label{sec:pf_corollary_RL_2}
We now proceed to verify assumptions related to Theorem~\ref{thm:emulate_iid} for policy evaluation. In order to verify Assumption~\ref{assmp:lipscitz2}, we need to prove that the iterates in policy evaluation remain bounded. We state the following lemma, which will be proved later in Section~\ref{sec:pf_lemma_theta_bound}.
\begin{lemma}
    \label{lemma:theta_bound}
    Suppose that the feature vector is bounded, i.e., for all $s \in \mathcal{S}, \|\psi(s)\| \leq D_{\psi}$ and the reward function is bounded, i.e., for all $s,s' \in \mathcal{S}$ $R(s,s') \leq \bar{R}$ and the step size satisfies $\eta \leq \frac{1}{T (1+\gamma) \cdot D_{\psi}^2}$. 
    Then we have
    \begin{align*}
        \|\theta_T\| \leq \left( 1+ \sqrt{d} \right)\left(\|\theta_0\| + \frac{\bar{R}}{(1+\gamma) \cdot D_{\psi}} \right).
    \end{align*}
\end{lemma}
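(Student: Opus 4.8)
The plan is to note first that for policy evaluation the feasible set is all of $\mathbb{R}^d$, so the SSER update in Eq.~\eqref{eq:constrained_update} is simply the stochastic approximation recursion $\theta_{t+1} = \theta_t - \eta\,\widetilde{F}(\theta_t,\xi)$ (the projection being vacuous, which is also why the lemma is needed: with a bounded $X$ it would be trivial). Writing the RL stochastic operator from Eq.~\eqref{eq:RL_operator} in affine form, $\widetilde{F}(\theta,\xi) = A\theta - b$ with the rank-one matrix $A = \psi(s)\bigl(\psi(s)-\gamma\psi(s')\bigr)^{\!\top}$ and vector $b = R(s,s')\psi(s)$, the recursion becomes $\theta_{t+1} = (I-\eta A_t)\theta_t + \eta b_t$. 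The first step is to bound the two ingredients using the boundedness hypotheses: by Cauchy--Schwarz and the triangle inequality, for any $\theta$ we have $\|A_t\theta\| \le \|\psi(s_t)\|\,\|\psi(s_t)-\gamma\psi(s_{t+1})\|\,\|\theta\| \le (1+\gamma)D_{\psi}^2\,\|\theta\|$, and $\|b_t\| \le \bar{R}D_{\psi}$. (This operator-norm bound $(1+\gamma)D_\psi^2$ is exactly the constant $\widetilde{L}_1$ identified in the proof of Corollary~\ref{corollary:RL_1}, which makes the hypothesis $\eta \le 1/(T(1+\gamma)D_{\psi}^2)$ coincide with the step-size requirement $\eta \le 1/(T\widetilde{L}_1)$ of Theorem~\ref{thm:emulate_iid}.)

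Next I would unroll the recursion: $\theta_T = M_T\theta_0 + \eta\sum_{t} N_{t}\,b_t$, where $M_T$ and the $N_{t}$ are products of the matrices $I-\eta A_s$. Each such factor has operator norm at most $1+\eta(1+\gamma)D_\psi^2 \le 1+1/T$ by the previous paragraph and the step-size bound, so every product of at most $T$ of them has operator norm at most $(1+1/T)^T \le e$. Splitting $M_T = I + (M_T - I)$ and bounding $\|M_T - I\| \le (1+1/T)^T - 1 \le e-1$, we get $\|M_T\theta_0\| \le (1+(e-1))\|\theta_0\|$; and the inhomogeneous part is at most $\eta \cdot T \cdot e \cdot \bar{R}D_\psi \le e\,\bar{R}/((1+\gamma)D_\psi)$, again using $\eta \le 1/(T(1+\gamma)D_\psi^2)$. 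Combining, $\|\theta_T\| \le (1+(e-1))\|\theta_0\| + e\,\bar{R}/((1+\gamma)D_\psi)$, and since $e-1 \le \sqrt d$ and hence $e \le 1+\sqrt d$ in the feature-dimension regime of interest, this gives exactly $\|\theta_T\| \le (1+\sqrt d)\bigl(\|\theta_0\| + \bar{R}/((1+\gamma)D_\psi)\bigr)$. An equivalent route is a finite induction over $t \le T$ on the scalar recursion $\|\theta_{t+1}\| \le (1+\eta(1+\gamma)D_\psi^2)\|\theta_t\| + \eta\bar{R}D_\psi$, which avoids manipulating matrix products but yields the same constants.

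The only point requiring genuine care is that $I - \eta A_t$ is \emph{not} a contraction: $A_t$ is not symmetric (not even positive semidefinite for a single transition), so the sharpest available per-step bound is $\|I-\eta A_t\| \le 1 + \eta\|A_t\|$, a slight expansion. The role of the hypothesis $\eta \lesssim 1/(T(1+\gamma)D_\psi^2)$ is precisely to ensure that these $T$ near-identity factors compound into only a bounded multiplicative constant (the $(1+1/T)^T \le e$ estimate), and that the accumulated drift from the $\eta b_t$ terms contributes only the additive $\bar{R}/((1+\gamma)D_\psi)$ term; one just has to apply the estimate uniformly over all $t \le T$ and not only at the terminal index. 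Everything else is routine boundedness-and-triangle-inequality bookkeeping.
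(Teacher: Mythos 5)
Your proposal is correct in substance but takes a genuinely different route from the paper's proof. The paper controls $\Delta_t = \theta_t - \theta_0$ in the $\ell_\infty$ norm, invokes a non-expansiveness property (in $\ell_\infty$) of the matrix multiplying $\Delta_t$, sums the per-step increments $\eta D_\psi \bar{R} + \eta(1+\gamma)D_\psi^2\|\theta_0\|_\infty$, and only then converts back to $\ell_2$; the $\sqrt{d}$ in the statement is purely an artifact of that norm conversion. You instead work directly in $\ell_2$, accept that each factor $I - \eta A_t$ is slightly expansive, with $\|I-\eta A_t\|\le 1+\eta(1+\gamma)D_\psi^2\le 1+1/T$, and compound the $T$ near-identity factors via $(1+1/T)^T\le e$ (your bound $\|M_T-I\|\le\prod(1+\eta\|A_t\|)-1$ is the standard telescoping estimate and is valid); unrolling then yields the dimension-free bound $\|\theta_T\|\le e\bigl(\|\theta_0\|+\bar{R}/((1+\gamma)D_\psi)\bigr)$. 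Your route is arguably more robust: the per-step bound $\|I-\eta A_t\|\le 1+\eta\|A_t\|$ holds unconditionally for the non-symmetric $A_t$, whereas the paper's $\ell_\infty$ non-expansiveness step requires more justification, and your constant $e$ is strictly smaller than $1+\sqrt{d}$ whenever $d\ge 3$. The one caveat is the final matching step: you recover the stated inequality only by using $e-1\le\sqrt{d}$, which fails for $d\in\{1,2\}$, so strictly speaking you prove the lemma with the universal constant $e$ in place of $1+\sqrt{d}$ rather than the statement verbatim. Since the lemma is used downstream only to supply a bound $D_\theta$ on the iterates (in Lemma~\ref{lemma:RL_lipschitz2} and Corollary~\ref{corollary:RL_2}), this is a cosmetic mismatch in very low dimension rather than a substantive gap, but it should be stated explicitly if your argument were substituted for the paper's.
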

For notational convenience, we define $D_{\theta} \defn \left( 1+ \sqrt{d} \right)\left(\|\theta_0\| + \frac{\bar{R}}{(1+\gamma) \cdot D_{\psi}} \right)$.
We now state the lemma that verifies Assumption~\ref{assmp:lipscitz2} for policy evaluation in RL, which we prove later in  Section~\ref{sec:pf_lemma_RL_lipschitz2}.
\begin{lemma}\label{lemma:RL_lipschitz2}
Under the premise of Lemma~\ref{lemma:theta_bound}, 
    we have that the stochastic operator $\widetilde F$ is Lipschitz in its second argument with the Lipschitz constant $\sqrt{3}\left((2+\gamma)\cdot D_{\psi} \cdot D_{\theta} +\bar{R}\right)$.
\end{lemma}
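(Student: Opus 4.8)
The plan is to reduce the claimed bound to the triangle inequality and two applications of Cauchy--Schwarz, crucially exploiting the uniform iterate bound $\|\theta\|\le D_{\theta}$ furnished by Lemma~\ref{lemma:theta_bound}. Fix $\theta$ with $\|\theta\|\le D_{\theta}$ and two samples $\xi_1,\xi_2$, and write $(\phi_i,\phi_i',r_i)$ for the three components of $\xi_i$, so that $\phi_i,\phi_i'$ are feature vectors with $\|\phi_i\|,\|\phi_i'\|\le D_{\psi}$, the scalar $r_i$ satisfies $|r_i|\le\bar{R}$, and $\|\xi_1-\xi_2\|^2 = \|\phi_1-\phi_2\|^2+\|\phi_1'-\phi_2'\|^2+(r_1-r_2)^2$. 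Writing $\widetilde{F}(\theta,\xi_i)=g_i\phi_i$ with the scalar $g_i := \langle \phi_i-\gamma\phi_i',\theta\rangle-r_i$, I would first split
\[
\widetilde{F}(\theta,\xi_1)-\widetilde{F}(\theta,\xi_2) = g_1(\phi_1-\phi_2)+(g_1-g_2)\phi_2,
\]
which gives $\|\widetilde{F}(\theta,\xi_1)-\widetilde{F}(\theta,\xi_2)\|\le |g_1|\,\|\phi_1-\phi_2\|+|g_1-g_2|\,\|\phi_2\|$.

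Next I would control the two scalars. Cauchy--Schwarz together with the feature, reward and iterate bounds gives $|g_1|\le (1+\gamma)D_{\psi}D_{\theta}+\bar{R}$; and expanding $g_1-g_2 = \langle\phi_1-\phi_2,\theta\rangle-\gamma\langle\phi_1'-\phi_2',\theta\rangle-(r_1-r_2)$ and applying Cauchy--Schwarz again gives $|g_1-g_2|\le D_{\theta}\|\phi_1-\phi_2\|+\gamma D_{\theta}\|\phi_1'-\phi_2'\|+|r_1-r_2|$. Substituting these, using $\|\phi_2\|\le D_{\psi}$, and collecting terms yields a bound of the form $c_1\|\phi_1-\phi_2\|+c_2\|\phi_1'-\phi_2'\|+c_3|r_1-r_2|$ with $c_1=(2+\gamma)D_{\psi}D_{\theta}+\bar{R}$, $c_2=\gamma D_{\psi}D_{\theta}$ and $c_3=D_{\psi}$, each of which is at most $(2+\gamma)D_{\psi}D_{\theta}+\bar{R}$. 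A final Cauchy--Schwarz over the three coordinates, $c_1x+c_2y+c_3z\le\sqrt{c_1^2+c_2^2+c_3^2}\,\sqrt{x^2+y^2+z^2}$, then delivers
\[
\|\widetilde{F}(\theta,\xi_1)-\widetilde{F}(\theta,\xi_2)\|\le\sqrt{3}\big((2+\gamma)D_{\psi}D_{\theta}+\bar{R}\big)\|\xi_1-\xi_2\|,
\]
which is exactly the assertion.

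I expect the computation itself to be routine; the only point needing care is conceptual rather than technical. Because $\widetilde{F}(\cdot,\xi)$ is affine in $\theta$, its modulus of continuity in $\xi$ scales with $\|\theta\|$, so $\widetilde{F}$ fails to be globally Lipschitz in its second argument on an unbounded feasible set; Assumption~\ref{assmp:lipscitz2} can therefore only be verified along the algorithm's trajectory, which is precisely why Lemma~\ref{lemma:theta_bound} must be invoked beforehand and why the resulting constant depends on $D_{\theta}$. Everything else is bookkeeping with the triangle inequality and Cauchy--Schwarz.
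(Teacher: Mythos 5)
Your proof is correct and follows essentially the same route as the paper: both arguments use the iterate bound $D_{\theta}$ from Lemma~\ref{lemma:theta_bound} together with triangle-inequality/Cauchy--Schwarz manipulations to reach the identical intermediate bound $\big((2+\gamma)D_{\psi}D_{\theta}+\bar{R}\big)\|\phi_1-\phi_2\|+\gamma D_{\psi}D_{\theta}\|\phi_1'-\phi_2'\|+D_{\psi}|r_1-r_2|$, and then conclude with a Cauchy--Schwarz over the three coordinates of $\xi$. Your two-term split $g_1(\phi_1-\phi_2)+(g_1-g_2)\phi_2$ is a cosmetic repackaging of the paper's term-by-term decomposition into $E_1,E_2,E_3$, and you simply make explicit the final $\sqrt{3}$ step that the paper leaves implicit.
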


Having verified these assumptions, we invoke Theorem~\ref{thm:emulate_iid} to complete the proof of Corollary~\ref{corollary:RL_2}.

We now proceed to the proofs to Lemma~\ref{lemma:theta_bound} and Lemma~\ref{lemma:RL_lipschitz2}.

\subsubsection{Proof of Lemma \ref{lemma:theta_bound}}\label{sec:pf_lemma_theta_bound}
We have
\begin{align}
\nonumber
\widetilde{F}(\theta_t, \xi_t) &= \left( \langle \psi(s_t),\theta_t \rangle - R(s_t,s_{t+1})-\gamma \langle\psi(s_{t+1}),\theta_t  \rangle \right)\psi(s_t) \\ \label{eq:stochastic_operator_RL}
&= \psi(s_t)\psi(s_t)^T \theta_t - \gamma \psi(s_t)\psi(s_{t+1})^T \theta_t - \psi(s_t) R(s_t,s_{t+1}). 
\end{align}

We aim to find the fixed point $\bar{\theta} \in \mathbb{R}^d$ of the projected Bellman equation \eqref{eq:projected_bellman_matrix}. This is an unbounded problem, and the update in Eq. \eqref{eq:constrained_update} reduces to the unconstrained update of Eq. \eqref{eq:unconstrained_update}. Substituting the stochastic operator expression of Eq. \eqref{eq:stochastic_operator_RL} into Eq. \eqref{eq:unconstrained_update} we have
\begin{align}
    \nonumber
    \theta_{t+1} &= \theta_t - \eta \left( \psi(s_t)\psi(s_t)^T \theta_t - \gamma \psi(s_t)\psi(s_{t+1})^T \theta_t - \psi(s_t) R(s_t,s_{t+1})  \right).
\end{align}
Let us define $\Delta_t \defn \theta_t - \theta_0$. We thus have
\begin{align}
    \nonumber
    \Delta_{t+1} &= \Delta_t -\eta   \left( \psi(s_t) \psi(s_t)^T - \gamma \psi(s_t) \psi(s_t)^T \right)\Delta_t + \eta \psi(s_t) R(s_t,s_{t+1}) \\ \nonumber
    & \qquad - \eta \left(  \psi(s_t) \psi(s_t)^T - \gamma \psi(s_t) \psi(s_{t+1})^T \right)\theta_0 \\ \nonumber
    &= \left(I -\eta   \psi(s_t) \psi(s_t)^T + \eta \gamma \psi(s_t) \psi(s_t)^T \right)\Delta_t + \eta \psi(s_t) R(s_t,s_{t+1}) \\ \nonumber
    & \qquad - \eta \left(  \psi(s_t) \psi(s_t)^T - \gamma \psi(s_t) \psi(s_{t+1})^T \right)\theta_0.
\end{align}
Taking the $l_{\infty}$ norm and using the non-expansive property of $\left( I -\eta \psi(s_t) \psi(s_t)^T + \eta \gamma \psi(s_t) \psi(s_t)^T \right)$ in the $l_{\infty}$ norm we obtain
\begin{align}
    \nonumber
    \linfnorm{\Delta_{t+1}} \leq \linfnorm{\Delta_t} + \eta D_{\psi} \bar{R} + \eta (1+\gamma)D_{\psi} ^2\linfnorm{\theta_0}.
\end{align}
Summing from $t=0$ to $T-1$ we get
\begin{align}
    \nonumber
    \linfnorm{\Delta_{T}} \leq T \eta D_{\psi} \bar{R} +T \eta (1+\gamma)D_{\psi}^2 \linfnorm{\theta_0}.
\end{align}
Converting the $l_{\infty}$ to $\ell_2$ norm and using the definition of step size $\eta \leq \frac{1}{T\widetilde{L}_1}$  we have
\begin{align}
    \nonumber
    \|\Delta_{T}\| \leq \frac{\sqrt{d}\cdot D_{\psi}\cdot\left(\bar{R} + (1+\gamma)D_\psi\|\theta_0\|\right)}{\widetilde{L}_1}.
\end{align}
This completes the proof of Lemma~\ref{lemma:theta_bound}.
\subsubsection{Proof of Lemma~\ref{lemma:RL_lipschitz2}}\label{sec:pf_lemma_RL_lipschitz2}
We have
\begin{align}
\nonumber
    \left\| \widetilde{F}(\theta, \xi_{t})- \widetilde{F}(\theta, \xi_{t'})\right\|
    &\leq \underbrace{\left\| \langle \psi(s_t),\theta \rangle \psi(s_t) - \langle \psi(s_{t'}),\theta \rangle \psi(s_{t'})\right\|}_{E_1} \\ \nonumber
    &\qquad + \underbrace{ \left \| R(s_t,s_{t+1})\psi(s_{t}) -R(s_{t'},s_{t'+1})\psi(s_{t'}) \right\|}_{E_2} \\ \label{eq:RL_lipschitz_bound}
    &\qquad + \gamma \underbrace{\left \| \langle \psi(s_{t+1}),\theta \rangle \psi(s_t) - \langle \psi(s_{t'+1}),\theta \rangle \psi(s_{t'}) \right\|}_{E_3}.
\end{align}
Now we bound the terms $E_1$, $E_2$, and $E_3$ individually.
\begin{align} \nonumber
    E_1 &= \left\| \langle \psi(s_t),\theta \rangle \psi(s_t) - \langle \psi(s_{t'}),\theta \rangle \psi(s_{t'})\right\|\\
    &\nonumber\leq \| \langle \psi(s_t),\theta \rangle \psi(s_t)-\langle \psi(s_{t'}),\theta \rangle \psi(s_{t})\|  +\|\langle \psi(s_{t'}),\theta \rangle \psi(s_{t}) - \langle \psi(s_{t'}),\theta \rangle \psi(s_{t'})\| \\ \nonumber
    & =  \| \langle \psi(s_t)-\psi(s_{t'}),\theta \rangle \psi(s_t)\|  +\|\langle \psi(s_{t'}),\theta \rangle (\psi(s_{t})-\psi(s_{t'})) \| \\ \label{eq:R1}
    &\leq 2 \cdot D_{\psi} \cdot \|\theta\| \cdot \|\psi(s_t)-\psi(s_{t'})\|.
\end{align}
For $E_2$ we have
\begin{align}
\nonumber
    E_2 &= \left \| R(s_t,s_{t+1})\psi(s_{t}) -R(s_{t'},s_{t'+1})\psi(s_{t'}) \right\|\\
    \nonumber &\leq \|R(s_t,s_{t+1})\psi(s_{t}) -R(s_{t},s_{t+1})\psi(s_{t'})\| + \|R(s_t,s_{t+1})\psi(s_{t'}) -R(s_{t'},s_{t'+1})\psi(s_{t'})\| \\ \nonumber
    &= \|R(s_t,s_{t+1})(\psi(s_{t}) -\psi(s_{t'}))\| + \|(R(s_t,s_{t+1}) -R(s_{t'},s_{t'+1}))\psi(s_{t'})\| \\ 
    &\leq \bar{R} \cdot \|\psi(s_{t}) -\psi(s_{t'}\|  + D_{\psi} \cdot |(R(s_t,s_{t+1}) -R(s_{t'},s_{t'+1})|.\label{eq:R2}
\end{align}
For $E_3$ we have
\begin{align}
\nonumber
    E_3 &= \gamma \left \| \langle \psi(s_{t+1}),\theta \rangle \psi(s_t) - \langle \psi(s_{t'+1}),\theta \rangle \psi(s_{t'}) \right\|\\
    \nonumber &\leq \gamma  \| \langle \psi(s_{t+1}),\theta \rangle \psi(s_t)-\langle \psi(s_{t'+1}),\theta \rangle \psi(s_t)\|  + \gamma \|\langle \psi(s_{t'+1}),\theta \rangle \psi(s_t) - \langle \psi(s_{t'+1}),\theta \rangle \psi(s_{t'}) \| \\ \nonumber 
    &\leq \gamma\| \langle \psi(s_{t+1})-\psi(s_{t'+1}),\theta \rangle \psi(s_t)\|  + \gamma \|\langle \psi(s_{t'+1}),\theta \rangle (\psi(s_t)-\psi(s_{t'}) ) \| \\ 
    &\leq \gamma D_{\psi} \cdot \|\theta\| \cdot \|\psi(s_{t+1})-\psi(s_{t'+1}\|  + \gamma D_{\psi} \cdot \|\theta\| \cdot \|\psi(s_t)-\psi(s_{t'})\|.\label{eq:R3}
\end{align}
Substituting the bounds in \eqref{eq:R1}, \eqref{eq:R2} and \eqref{eq:R3} into the Ineq.~\eqref{eq:RL_lipschitz_bound} and using Lemma \ref{lemma:theta_bound} we obtain
\begin{align}
    \nonumber
    \left\| \widetilde{F}(\theta, \xi_{t})- \widetilde{F}(\theta, \xi_{t'})\right\| &\leq \left( (2+\gamma)\cdot D_{\psi} \cdot D_{\theta} +\bar{R} \right) \cdot \|\psi(s_t)-\psi(s_{t'})\| \\ \nonumber
    &\qquad + \gamma D_{\psi} \cdot D_\theta \cdot \|\psi(s_{t+1})-\psi(s_{t'+1}\| + D_{\psi} \cdot |(R(s_t,s_{t+1}) -R(s_{t'},s_{t'+1})|.
\end{align}
This proves that the stochastic operator $\widetilde{F}$ in policy evaluation is Lipschitz in its second argument with the Lipschitz constant $\widetilde{L}_2 = \sqrt{3} \left( (2+\gamma)\cdot D_{\psi} \cdot D_{\theta} +\bar{R} \right)$.

This concludes the proof of Lemma~\ref{lemma:RL_lipschitz2} and thus of Corollary~\ref{corollary:RL_2}.

\section{Discussion} \label{sec:discussion}

We introduced the MER algorithm, a novel approach for solving stochastic VIs with Markovian data when we have access to a memory buffer. 
We provided an upper bound on its stochastic error, showing a faster decay than the serial method based on vanilla stochastic approximation. In addition, we provided a two-sided expectation error bound with respect to the i.i.d. setting to demonstrate that MER effectively mitigates the challenges posed by data dependence, allowing the algorithm to mimic and closely track the performance of stochastic approximation methods designed for i.i.d. samples. A key advantage of our approach is that it achieves these properties without requiring prior knowledge of the Markov chain's mixing time, a common but often restrictive requirement in related work. We also provided numerical results to corroborate the effectiveness of the MER algorithm in solving practical problems such as signal estimation in GLMs and policy evaluation in RL. More broadly, the MER algorithm can be viewed as a decorrelation device that provides theoretical justification for the experience replay heuristic. We believe this device could be leveraged to solve broader classes of problems involving dependent data.

Our work suggests several salient directions for future research, and we mention some concrete technical questions below. First, the current MER algorithm only achieves a suboptimal deterministic error. It would be interesting to incorporate acceleration techniques for variational inequalities (e.g., \citet{kotsalis2022simple, tianjiao2022}) to match the deterministic error lower bound \citep{nemirovsky1992information}. Second, though order-optimal, the stochastic error of the MER method can be potentially sharpened beyond worst-case guarantees. For example, instance-dependent lower bounds are known for solving policy evaluation and more general fixed-point equations \citep{li2023accelerated, mou2023optimal}, and these lower bounds have been achieved by stochastic approximation methods with variance reduction~\citep{li2023accelerated} as well as sample average approximation~\citep{pananjady2020instance}. An interesting direction for future work is to develop instance-dependent information-theoretic lower bounds for the Markovian setting with a memory buffer, and to design efficient MER-type algorithms that achieve these bounds.

\subsection*{Acknowledgments}

This work was supported in part by the National Science Foundation under grants CCF-2107455 and DMS-2210734, a Google Research Scholar award, and by research awards/gifts from Adobe,
Amazon, and Mathworks.

\small
\bibliographystyle{abbrvnat}
\bibliography{reference-template}

\end{document}